\numberwithin{equation}{section}
\DeclareSymbolFontAlphabet{\mathbb}{AMSb}
\DeclareSymbolFontAlphabet{\mathbbl}{bbold}
\newtheorem{thm}{Theorem}[section]
\newtheorem{lem}[thm]{Lemma}
\newtheorem{cor}[thm]{Corollary}
\theoremstyle{definition}
\newtheorem{defn}[thm]{Definition}
\newtheorem{nota}[thm]{Notation}
\newtheorem{eg}[thm]{Example}
\newtheorem{rem}[thm]{Remark}
\theoremstyle{remarks}
\newtheorem*{rem*}{Remarks}
\newtheorem{ques}[thm]{Question}
\newtheoremstyle{case}{}{}{}{}{}{:}{ }{}
\theoremstyle{case}
\newcommand{\F}{\mathbb{F}}
\title[A note on modular Terwilliger algebras of association schemes]{A note on modular Terwilliger algebras of association schemes}
\begin{document}
\author{Yu Jiang}
\address[Y. Jiang]{Division of Mathematical Sciences, Nanyang Technological University, SPMS-MAS-05-34, 21 Nanyang Link, Singapore 637371.}
\email[Y. Jiang]{jian0089@e.ntu.edu.sg}


\begin{abstract}
Let $p$ denote a prime number. In this note, we focus on the modular Terwilliger algebras of association schemes defined in \cite{Han1}. We define the primary module of a modular Terwilliger algebra of an association scheme and determine all its composition factors up to isomorphism. We then characterize the $p'$-valenced association schemes by some properties of their modular Terwilliger algebras. The corollaries about the modular Terwilliger algebras of association schemes are given.
\vspace{-1.5em}
\end{abstract}
\maketitle
\noindent{\textbf{Keywords.} Association scheme; Modular Terwilliger algebra; $p'$-valenced scheme\\
\textbf{Mathematics Subject Classification 2020.} 05E30 (primary), 05E10 (secondary)}
\section{Introduction}
Fix a nonempty finite set $X$ and call an association scheme on $X$ a scheme.

The subconstituent algebras of schemes, introduced by Terwilliger in \cite{T}, are now known as the Terwilliger algebras of schemes. These algebras are finite-dimensional semisimple $\mathbb{C}$-algebras.
Let $\F$ denote a fixed field of positive characteristic $p$. In \cite{Han1}, Hanaki defined the Terwilliger algebras of schemes over a commutative unital ring. Hanaki also called the Terwilliger algebras of schemes over $\F$ the modular Terwilliger algebras of schemes and studied their basic properties. By the definition of a modular Terwilliger algebra of a scheme, the adjacency algebra of a scheme over $\F$ is contained in every modular Terwilliger algebra of this scheme. Hence the modular Terwilliger algebras of schemes may contain more combinatorial information than the adjacency algebras of schemes over $\F$. Therefore it is necessary to study the modular Terwilliger algebras of schemes.

The primary module of a Terwilliger algebra of a scheme $S$ is an irreducible module of this algebra. It closely relates to the structure of $S$ (see \cite{E}, \cite{T}, \cite{T1}, \cite{T2}). We define the primary module for a modular Terwilliger algebra of $S$ (see Definition \ref{D;primarymodule}). So it is natural to study the primary module of a modular Terwilliger algebra of $S$. As the first main result of this note, in Theorem \ref{T;primarymodule}, we determine all composition factors of the primary module of a modular Terwilliger algebra of $S$ up to isomorphism.

The $p'$-valenced schemes enjoy many algebraic properties. For example, according to \cite[Theorem 3.4]{Han1}, a modular Terwilliger algebra of $S$ is semisimple only if $S$ is a $p'$-valenced scheme. This listed example motivates us to characterize the $p'$-valenced schemes by the properties of their modular Terwilliger algebras. As the second main result of this note, in Theorem \ref{T;characterization}, we characterize the $p'$-valenced schemes by some properties of their modular Terwilliger algebras.

The organization of this note is as follows. The basic notation and preliminaries are given in Section $2$. Two main results are proved in Sections $3$ and $4$, respectively.
\section{Basic notation and preliminaries}
For a general background on association schemes, one may refer to \cite{EI}, \cite{Z2}, or \cite{Z3}.
\subsection{General conventions}
Throughout the present note, fix a field $\F$ of positive characteristic $p$ and a nonempty finite set $X$. Let $\mathbb{N}$ be the set of all natural numbers. Set $\mathbb{N}_0=\mathbb{N}\cup\{0\}$. If $a,b\in \mathbb{N}_0$, put $[a,b]=\{c\in \mathbb{N}_0: a\leq c\leq b\}$. For a nonempty set $Y$ of an $\F$-linear space, write $\langle Y\rangle_\F$ for the $\F$-linear space generated by $Y$. By convention, $\langle \varnothing\rangle_\F$ is the zero space. The addition, the multiplication, and the scalar multiplication of matrices in this note are the usual matrix operations. A scheme means an association scheme on $X$. All modules are finitely generated left modules.
\subsection{Schemes}
Let $S=\{R_0, R_1,\ldots, R_d\}$ be a partition of the Cartesian product $X\times X$. Then $S$ is a scheme of class $d$ if the following conditions hold:
\begin{enumerate}[(i)]
\item $R_0=\{(x,x): x\in X\}$;
\item For any $i\in [0,d]$, there is $i'\in [0,d]$ such that $\{(x,y): (y,x)\in R_i\}=R_{i'}\in S$;
\item For any $i,j,\ell\in [0,d]$ and $(x,y), (\tilde{x},\tilde{y})\in R_\ell$, the following equality holds: $|\{z\in X: (x,z)\in R_i,\ (z,y)\in R_j\}|=|\{z\in X: (\tilde{x},z)\in R_i,\ (z,\tilde{y})\in R_j\}|.$
\end{enumerate}

Throughout the whole note, $S=\{R_0, R_1,\ldots, R_d\}$ is a fixed scheme of class $d$. By (iii), for any $i,j,\ell\in [0,d]$ and $(x,y)\in R_\ell$, there exists a constant $p_{ij}^\ell\in \mathbb{N}_0$ such that $p_{ij}^\ell=|\{z\in X: (x,z)\in R_i,\ (z,y)\in R_j\}|$.
Let $i\in [0,d]$ and $x,y\in X$. Set $xR_i=\{z\in X: (x,z)\in R_i\}$. Write $k_i$ for $p_{ii'}^{0}$. The number $k_i$ is called the valency of $R_i$. Observe that $|xR_i|=|yR_i|=k_i$. Hence $k_i>0$ since $R_i\neq\varnothing$. If $n\in \mathbb{N}_0$, set $S_n=\{j\in [0,d]: p^n\mid k_j\ \text{and}\ p^{n+1}\nmid k_j\}$. Call $S$ a $p'$-valenced scheme if $S_0=[0,d]$. Put $O_\vartheta(S)=\{R_j\in S: k_j=1\}$. Note that $R_0\in O_\vartheta(S)$. We list a lemma as follows.
\begin{lem}\label{L;trigular}\cite[Proposition 2.2 (vi)]{EI}
Let $i,j,\ell\in [0,d]$. Then $k_\ell p_{ij}^{\ell}=k_i p_{\ell j'}^{i}=k_jp_{i'\ell}^{j}$.
\end{lem}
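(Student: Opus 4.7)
The plan is to prove both identities by a standard double-counting argument, using only axioms (ii) and (iii) of a scheme together with the defining formula $k_i = |xR_i|$.

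For the first equality $k_\ell p_{ij}^\ell = k_i p_{\ell j'}^i$, I would fix an arbitrary element $x \in X$ and count the set
\[
T = \{(y,z) \in X \times X : (x,y) \in R_\ell,\ (x,z) \in R_i,\ (z,y) \in R_j\}
\]
in two different ways. Choosing $y$ first yields $k_\ell p_{ij}^\ell$, since there are $k_\ell$ elements $y$ with $(x,y) \in R_\ell$ and, for each such $y$, axiom (iii) provides exactly $p_{ij}^\ell$ valid $z$. Choosing $z$ first yields $k_i p_{\ell j'}^i$: there are $k_i$ choices of $z$, and for each one the condition $(z,y) \in R_j$ is rewritten as $(y,z) \in R_{j'}$ by axiom (ii), so the number of admissible $y$ is $p_{\ell j'}^i$ applied to the fixed pair $(x,z) \in R_i$.

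For the second equality $k_\ell p_{ij}^\ell = k_j p_{i'\ell}^j$, I would fix $y \in X$ and count the analogous set
\[
T' = \{(x,z) \in X \times X : (x,y) \in R_\ell,\ (x,z) \in R_i,\ (z,y) \in R_j\}.
\]
Choosing $x$ first: the number of $x$ with $(x,y) \in R_\ell$ equals $k_{\ell'}$ after rewriting the condition as $(y,x) \in R_{\ell'}$ via axiom (ii), and for each such $x$ the count of valid $z$ is $p_{ij}^\ell$, giving $|T'| = k_{\ell'} p_{ij}^\ell$. Choosing $z$ first: there are $k_{j'}$ choices of $z$ satisfying $(y,z) \in R_{j'}$, and for each one, rewriting $(x,z) \in R_i$ as $(z,x) \in R_{i'}$, the number of valid $x$ is $p_{i'\ell}^j$ applied to the fixed pair $(z,y) \in R_j$, giving $|T'| = k_{j'} p_{i'\ell}^j$. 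This turns into the claimed identity once we invoke the auxiliary fact $k_m = k_{m'}$ for every $m \in [0,d]$, which follows from the obvious bijection $R_m \to R_{m'}$, $(u,v) \mapsto (v,u)$, together with the equalities $|R_m| = |X| k_m$ and $|R_{m'}| = |X| k_{m'}$ coming from summing $|xR_m|$ and $|xR_{m'}|$ over $x \in X$.

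The main obstacle here is not conceptual but is rather careful bookkeeping of the inverse relations $i'$, $j'$, $\ell'$: at each step one must rewrite conditions of the form $(u,v) \in R_m$ as $(v,u) \in R_{m'}$ so that each count lines up with a legitimate application of the definition of an intersection number. No deeper difficulty is present, and the argument rests only on the three scheme axioms.
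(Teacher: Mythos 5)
Your proof is correct. The paper gives no proof of its own here — it simply cites \cite[Proposition 2.2 (vi)]{EI} — and your double-counting argument (counting the triangles $x,y,z$ with prescribed edge types in two orders, plus the auxiliary fact $k_m=k_{m'}$) is precisely the standard argument behind that cited result.
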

\subsection{Algebras}
Let $\mathbb{Z}$ be the integer ring and $\F_p$ be the prime subfield of $\F$. Given $a\in \mathbb{Z}$, let $\overline{a}$ be the image of $a$ under the unital ring homomorphism from $\mathbb{Z}$ to $\F_p$.

Let $A$ be a finite-dimensional associative unital $\F$-algebra and $B$ be a two-sided ideal of $A$. Call $B$ a nilpotent ideal of $A$ if there is $m\in \mathbb{N}$ such that the product of any $m$ elements of $B$ is the zero element of $A$. Let $\mathrm{Rad}(A)$ be the Jacobson radical of $A$. Recall that $\mathrm{Rad}(A)$ is the sum of all nilpotent two-sided ideals of $A$. Let $U$ be a nonzero $A$-module and $V, W$ be $A$-modules. Call $U$ an irreducible $A$-module if $U$ has no nonzero proper $A$-submodule. Call $U$ an indecomposable $A$-module if $U$ is not a direct sum of two nonzero $A$-submodules. Let $\mathrm{Ann}_A(V)=\{\hat{a}\in A: \hat{a}\hat{v}=0\ \forall\ \hat{v}\in V\}$. Observe that $\mathrm{Ann}_A(V)$ is a two-sided ideal of $A$. If $V$ is an irreducible $A$-module, it is known that $\mathrm{Rad}(A)\subseteq \mathrm{Ann}_A(V)$. If $W$ is an $A$-submodule of $V$, $V/W$ denotes the quotient $A$-module of $V$ with respect to $W$. A composition series of $U$ of length $n$ is an $A$-submodule series $U_n\subset U_{n-1}\subset\cdots\subset U_{1}\subset U_0=U$ of $U$, where $U_n$ is the zero module and $U_{q-1}/U_{q}$ is an irreducible $A$-module for every $q\in [1,n]$. Call $U_{q-1}/U_{q}$ a composition factor of $U$ for every $q\in [1,n]$. By the Jordan-H\"{o}lder Theorem, all composition factors of $U$ are independent of the choice of composition series of $U$ up to permutation and isomorphism. So the length of a composition series of $U$ is an invariant of $U$. Call this invariant the composition length of $U$.
\subsection{Modular Terwilliger algebras of schemes}\label{S;primarymodule}
Let $\F^X$ be the $\F$-linear space of $\F$-column vectors whose coordinates are labeled by the elements of $X$. Let $\mathbf{1}$ and $\mathbf{0}$ be the all-one column vector and the all-zero column vector in $\F^X$, respectively. Let $M_X(\F)$ be the full matrix algebra of $\F$-square matrices whose rows and columns are labeled by the elements of $X$. Let $I$, $J$, $O$ denote the identity matrix, the all-one matrix, and the all-zero matrix in $M_X(\F)$, respectively. If $Z\in M_X(\F)$, let $Z^t$ denote the transpose of $Z$. Let $y,z\in X$ and $E_{yz}$ denote the $\{0,1\}$-matrix in $M_X(\F)$ whose unique nonzero entry is the $(y,z)$-entry.

Let $i,j\in [0,d]$. The adjacency $\F$-matrix with respect to $R_i$, denoted by $A_i$, is the matrix $\sum_{(\tilde{x},\tilde{y})\in R_i}E_{\tilde{x}\tilde{y}}$. The dual $\F$-idempotent with respect to $y$ and $R_i$, denoted by $E_i^*(y)$, is the matrix $\sum_{\tilde{z}\in yR_i}E_{\tilde{z}\tilde{z}}$. Let $\delta_{\alpha\beta}$ denote the Kronecker delta of integers $\alpha$ and $\beta$ whose values are in $\F$. Notice that
\begin{align}\label{Eq;preliminary1}
A_i^t=A_{i'}\ \text{and}\ E_i^*(y)^t=E_i^*(y),
\end{align}
\begin{align}\label{Eq;preliminary2}
A_0=I=\sum_{\ell=0}^dE_\ell^*(y)\ \text{and}\ J=\sum_{\ell=0}^dA_\ell,
\end{align}
\begin{align}\label{Eq;preliminary3}
E_i^*(y)E_j^*(y)=\delta_{ij}E_i^*(y),
\end{align}
\begin{align}\label{Eq;preliminary4}
E_i^*(y)JE_j^*(y)\neq O,
\end{align}
\begin{align}\label{Eq;preliminary5}
JE_i^*(y)\mathbf{1}=\overline{k_i}\mathbf{1}\ (\text{in particular},\ JE_i^*(y)J=\overline{k_i}J).
\end{align}

Let $T(y)$ be the modular Terwilliger algebra of $S$ with respect to $y$. It is the unital $\F$-subalgebra of $M_X(\F)$ generated by $A_0, A_1,\ldots, A_d, E_0^*(y), E_1^*(y),\ldots, E_d^*(y)$. Recall that the algebraic structures of $T(y)$ and $\mathrm{Rad}(T(y))$ may depend on $y$. For example, it is possible that $\dim_\F T(y)\neq\dim_\F T(z)$ and $\dim_\F\mathrm{Rad}(T(y))\neq\dim_\F\mathrm{Rad}(T(z))$ at the same time (see \cite[5.1]{Han1}). From now on, fix $x\in X$. Set $T=T(x)$ and $E_\ell^*=E_\ell^*(x)$ for every $\ell\in [0,d]$. If $n\in \mathbb{N}_0$ and $i_m, j_m, \ell_m\in [0,d]$ for every $m\in [0,n]$, put
\begin{align}\label{Eq;preliminary6}
\prod_{m=0}^n(E_{i_m}^*A_{j_m}E_{\ell_m}^*)=E_{i_0}^*A_{j_0}E_{\ell_0}^*E_{i_1}^*A_{j_1}E_{\ell_1}^*\cdots E_{i_n}^*A_{j_n}E_{\ell_n}^*\in T\ \text{and}
\end{align}
\[
T_n=\{\prod_{m=0}^n(E_{r_m}^*A_{s_m}E_{t_m}^*): r_m, s_m, t_m\in [0,d]\ \text{for every $m\in [0,n]$}\}.
\]

We end the whole section with the following lemma.
\begin{lem}\label{L;generalresults}
Let $i,j,\ell\in [0,d]$.
\begin{enumerate}[(i)]
\item [\em (1)]\cite[Lemma 3.2]{Han1} $E_i^*A_jE_\ell^*\mathbf{1}=\overline{p_{\ell j'}^i}E_i^*\mathbf{1}$. In particular, $E_i^*A_jE_\ell^*J=\overline{p_{\ell j'}^i}E_i^*J$.
\item [\em (ii)] \cite[Lemma 3.2]{Jiang} If $E_i^*A_jE_\ell^*\neq O$ and $\min\{k_i,k_\ell\}=1$, then $E_i^*A_jE_\ell^*=E_i^*JE_\ell^*$.
\item [\em (iii)]Every element of $T$ is a finite $\F$-linear combination of elements of $\bigcup_{m\in \mathbb{N}_0}T_m$.
\end{enumerate}
\end{lem}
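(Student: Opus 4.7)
Items (i) and (ii) are stated with explicit references to \cite{Han1} and \cite{Jiang}, so I would simply invoke those results and not reprove them. The substance lies in (iii). By definition, $T$ is the unital $\F$-subalgebra of $M_X(\F)$ generated by $A_0,\ldots,A_d,E_0^*,\ldots,E_d^*$, so every element of $T$ is an $\F$-linear combination of finite products (with the empty product interpreted as $I$) of these generators. It therefore suffices to show that each such monomial lies in $\langle\bigcup_{m\in\mathbb{N}_0}T_m\rangle_\F$.

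The plan is a normal-form reduction driven by the two relations $I=\sum_{\ell=0}^d E_\ell^*$ from (\ref{Eq;preliminary2}) and $E_i^*E_j^*=\delta_{ij}E_i^*$ from (\ref{Eq;preliminary3}). Given a monomial $g_1 g_2\cdots g_k$ in the generators, I would insert a resolution of unity $I=\sum_\ell E_\ell^*$ at the very beginning, at the very end, and between any two consecutive factors that are both $A$'s; distributing the resulting sums expresses the monomial as a finite $\F$-linear combination of products in which every $A$-factor is flanked on both sides by at least one $E^*$-factor. Next, the orthogonality relation collapses each maximal run of adjacent $E^*$'s: if two distinct indices occur in the run the product is $O$, and otherwise the run reduces to a single $E^*$. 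Thus every surviving summand has the alternating shape $E_{i_0}^*A_{j_1}E_{i_1}^*A_{j_2}E_{i_2}^*\cdots A_{j_n}E_{i_n}^*$ for some $n\geq 0$, or is simply $E_{i_0}^*$ when $n=0$.

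Finally, I would match this shape against the block format defining $T_m$. For $n\geq 1$, setting $r_m=i_m$, $s_m=j_{m+1}$, $t_m=i_{m+1}$ for $m\in[0,n-1]$ and using $E_{t_m}^*E_{r_{m+1}}^*=E_{i_{m+1}}^*$ identifies the alternating expression with $\prod_{m=0}^{n-1}(E_{r_m}^*A_{s_m}E_{t_m}^*)\in T_{n-1}$. The case $n=0$, together with the empty-product monomial $I=\sum_\ell E_\ell^*$, is handled by the identity $E_\ell^*=E_\ell^*A_0E_\ell^*\in T_0$, which uses $A_0=I$ and (\ref{Eq;preliminary3}). I do not expect any genuine obstacle: the argument is essentially a bookkeeping normal-form reduction, and the only mildly delicate point is keeping track of indices when refolding the alternating chain as a product of three-letter blocks $E_{r_m}^*A_{s_m}E_{t_m}^*$.
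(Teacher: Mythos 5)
Your proof of (iii) is correct and amounts to the same argument the paper gives, just unrolled: the paper simply observes that the generators lie in $\langle\bigcup_{m\in\mathbb{N}_0}T_m\rangle_\F$ (via $I=\sum_\ell E_\ell^*=\sum_\ell E_\ell^*A_0E_\ell^*$ and $A_j=\sum_{i,\ell}E_i^*A_jE_\ell^*$) and that this span is a unital subalgebra because concatenating block products yields block products, whence it equals $T$. Your explicit normal-form reduction of each monomial is a valid, slightly more computational rendering of the same idea, and your handling of (i) and (ii) by citation matches the paper.
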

\begin{proof}
For (iii), $A_0, A_1,\ldots, A_d, E_0^*, E_1^*,\ldots, E_d^*\in \langle\bigcup_{m\in \mathbb{N}_0}T_m\rangle_\F$ by \eqref{Eq;preliminary2}, \eqref{Eq;preliminary3}, \eqref{Eq;preliminary6}. Notice that $\langle\bigcup_{m\in \mathbb{N}_0}T_m\rangle_\F$ is a unital $\F$-subalgebra of $T$ by \eqref{Eq;preliminary6}. So $T=\langle\bigcup_{m\in \mathbb{N}_0}T_m\rangle_\F$ by the definition of $T$. (iii) thus follows as $T=\langle\bigcup_{m\in \mathbb{N}_0}T_m\rangle_\F$ and $\dim_\F T\in \mathbb{N}$.
\end{proof}
\section{Primary modules of modular Terwilliger algebras of schemes}
In this section, we define the primary module of $T$ and study its basic properties. In particular, we  determine all its composition factors up to isomorphism. For our purpose, notice that $M_X(\F)$ acts on $\F^X$ by left multiplication. We first list a lemma.
\begin{lem}\label{L;primemodule}
The following statements hold:
\begin{enumerate}[(i)]
\item [\em (i)] $\{E_i^*\mathbf{1}: i\in [0,d]\}$ is an $\F$-linearly independent subset of $\F^X$ of cardinality $d+1$.
\item [\em (ii)] $\langle\{E_i^*\mathbf{1}: i\in [0,d]\}\rangle_\F$ is a $T$-module under the left multiplication action of $T$.
\item [\em (iii)] Assume that $n\in \mathbb{N}$. Then $\langle\{E_i^*\mathbf{1}: i\in [0,d],\ p^n\mid k_i\}\rangle_\F$ is a $T$-module under the left multiplication action of $T$.
\end{enumerate}
\end{lem}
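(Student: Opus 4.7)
The plan is to handle the three parts in sequence. Parts (i) and (ii) are essentially bookkeeping with the identities in Section 2.4, while (iii) requires a divisibility argument driven by Lemma \ref{L;trigular}.

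For (i), I would observe that $E_i^*\mathbf{1}$ is the characteristic $(0,1)$-vector of $xR_i\subseteq X$. Since $k_i=|xR_i|>0$ for every $i\in[0,d]$ and since $\{xR_i:i\in[0,d]\}$ is a partition of $X$ (a direct consequence of $S$ being a partition of $X\times X$ with $R_0$ the diagonal), the vectors $E_0^*\mathbf{1},\ldots,E_d^*\mathbf{1}$ have pairwise disjoint nonempty supports. Hence they are $\F$-linearly independent and form a set of cardinality exactly $d+1$.

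For (ii), since $T$ is generated as a unital $\F$-algebra by $A_0,\ldots,A_d,E_0^*,\ldots,E_d^*$, it suffices to show that each generator sends every $E_i^*\mathbf{1}$ into $V:=\langle\{E_r^*\mathbf{1}:r\in[0,d]\}\rangle_\F$. The case of $E_\ell^*$ is immediate from \eqref{Eq;preliminary3}. For $A_j$, I would insert $I=\sum_{\ell=0}^dE_\ell^*$ from \eqref{Eq;preliminary2} and invoke Lemma \ref{L;generalresults}(i) to obtain
\[
A_jE_i^*\mathbf{1}=\sum_{\ell=0}^dE_\ell^*A_jE_i^*\mathbf{1}=\sum_{\ell=0}^d\overline{p_{ij'}^\ell}\,E_\ell^*\mathbf{1}\in V.
\]

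For (iii), the same expansion works provided one verifies that whenever $p^n\mid k_i$ and $p^n\nmid k_\ell$ the scalar $\overline{p_{ij'}^\ell}$ vanishes. This is the only substantive step. Applying Lemma \ref{L;trigular} with $j$ replaced by $j'$ (and using $(j')'=j$) gives $k_\ell p_{ij'}^\ell=k_ip_{\ell j}^i$. Write $k_\ell=p^mu$ with $p\nmid u$ and $m\leq n-1$. Since $p^n\mid k_ip_{\ell j}^i$, we deduce $p^{n-m}\mid u\cdot p_{ij'}^\ell$, and because $p\nmid u$ this forces $p\mid p_{ij'}^\ell$, i.e.\ $\overline{p_{ij'}^\ell}=0$. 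Consequently each $A_jE_i^*\mathbf{1}$ with $p^n\mid k_i$ expands as an $\F$-combination of those $E_\ell^*\mathbf{1}$ with $p^n\mid k_\ell$, and closure under the $E_\ell^*$ is again immediate from \eqref{Eq;preliminary3}.

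The main difficulty, such as it is, lies in this divisibility step in (iii); the parts (i) and (ii) are formal consequences of the relations \eqref{Eq;preliminary2}, \eqref{Eq;preliminary3} and Lemma \ref{L;generalresults}(i), and care is only required in tracking the primed indices when applying Lemma \ref{L;trigular}.
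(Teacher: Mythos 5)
Your proof is correct and follows essentially the same route as the paper: the action formula from Lemma \ref{L;generalresults}(i) combined with the divisibility argument via Lemma \ref{L;trigular} is exactly the paper's key step for (iii), and your disjoint-supports observation for (i) is just the combinatorial face of the idempotent relation \eqref{Eq;preliminary3} that the paper uses. The only (cosmetic) differences are that you verify invariance under the algebra generators $A_j$, $E_\ell^*$ directly rather than under the spanning set of products $E_a^*A_bE_c^*$ from Lemma \ref{L;generalresults}(iii), and you argue (iii) directly where the paper argues by contradiction.
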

\begin{proof}
For (i), for every $h\in [0,d]$, notice that $E_h^*\mathbf{1}\neq \mathbf{0}$ by the definitions of $E_h^*$ and $\mathbf{1}$. Suppose that there exist $c_0, c_1,\ldots, c_d\in \F$ such that $(\bigcup_{i=0}^d\{c_i\})\cap(\F\setminus\{0\})\neq \varnothing$ and
$\sum_{i=0}^dc_iE_i^*\mathbf{1}=\mathbf{0}$. So there is $j\in [0,d]$ such that $c_j\neq 0$. By \eqref{Eq;preliminary3}, observe that
$c_jE_j^*\mathbf{1}=E_j^*(\sum_{i=0}^dc_iE_i^*\mathbf{1})=E_j^*\mathbf{0}=\mathbf{0}$. Since $E_j^*\mathbf{1}\neq \mathbf{0}$, we thus have $c_j=0$, which contradicts the inequality $c_j\neq 0$. So $\{E_i^*\mathbf{1}: i\in [0,d]\}$ is an $\F$-linearly independent subset of $\F^X$. We also note that $|\{E_i^*\mathbf{1}: i\in [0,d]\}|=d+1$ by \eqref{Eq;preliminary3}. (i) thus follows.

Let $a, b, c\in [0,d]$. For (ii), by \eqref{Eq;preliminary3} and Lemma \ref{L;generalresults} (i), $E_a^*A_bE_c^*E_h^*\mathbf{1}=\delta_{ch}\overline{p_{hb'}^a}E_a^*\mathbf{1}$ for every $h\in [0,d]$. In particular, notice that $E_a^*A_bE_c^*E_h^*\mathbf{1}\in \langle\{E_i^*\mathbf{1}: i\in [0,d]\}\rangle_\F$ for every $h\in [0,d]$. Since $\langle\{E_i^*\mathbf{1}: i\in [0,d]\}\rangle_\F$ is an $\F$-linear space and $a,b,c$ are chosen from $[0,d]$ arbitrarily, (ii) thus follows from Lemma \ref{L;generalresults} (iii) and \eqref{Eq;preliminary6}.

For (iii), notice that (iii) is trivial if $\langle\{E_i^*\mathbf{1}: i\in [0,d],\ p^n\mid k_i\}\rangle_\F=\{\mathbf{0}\}$. We thus assume further that $\langle\{E_i^*\mathbf{1}: i\in [0,d],\ p^n\mid k_i\}\rangle_\F\neq\{\mathbf{0}\}$. For every $h\in [0,d]$ and $p^n\mid k_h$, we claim that $E_a^*A_bE_c^*E_h^*\mathbf{1}\in \langle\{E_i^*\mathbf{1}: i\in [0,d],\ p^n\mid k_i\}\rangle_\F$. Suppose that there is $\ell\in [0,d]$ such that $E_a^*A_bE_c^*E_\ell^*\mathbf{1}\notin \langle\{E_i^*\mathbf{1}: i\in [0,d],\ p^n\mid k_i\}\rangle_\F$ and $p^n\mid k_\ell$. So $E_a^*A_bE_c^*E_\ell^*\mathbf{1}\neq \mathbf{0}$. So $E_a^*A_bE_c^*E_\ell^*\mathbf{1}=\overline{p_{\ell b'}^a}E_a^*\mathbf{1}\notin \langle\{E_i^*\mathbf{1}: i\in [0,d],\ p^n\mid k_i\}\rangle_\F$ by \eqref{Eq;preliminary3} and Lemma \ref{L;generalresults} (i). We thus have $p\nmid p_{\ell b'}^a$ and $p^n\nmid k_a$. Since $p^n\mid k_\ell$, $p^n\nmid k_a$, and $k_ap_{\ell b'}^a=k_\ell p_{ab}^\ell$ by Lemma \ref{L;trigular}, observe that $p\mid p_{\ell b'}^a$, which contradicts the fact $p\nmid p_{\ell b'}^a$. The desired claim follows. As  $\langle\{E_i^*\mathbf{1}: i\in [0,d],\ p^n\mid k_i\}\rangle_\F$ is an $\F$-linear space and $a,b,c$ are chosen from $[0,d]$ arbitrarily, (iii) is proved by combining Lemma \ref{L;generalresults} (iii), \eqref{Eq;preliminary6}, and the proven claim.
\end{proof}
We are now ready to define the primary module of $T$.
\begin{defn}\label{D;primarymodule}
The $T$-module in Lemma \ref{L;primemodule} (ii) is similar to the primary module of a Terwilliger algebra of $S$ (see \cite[Lemma 3.6]{T}). Call the $T$-module in Lemma \ref{L;primemodule} (ii) the primary module of $T$ and denote it by $W_0$. Let $n\in \mathbb{N}$. Let $W_n$ denote the $T$-module in Lemma \ref{L;primemodule} (iii). For every $m\in \mathbb{N}_0$, note that $W_{m+1}$ is a $T$-submodule of $W_m$. Notice that $W_1\subset W_0$. Notice that $\dim_\F W_0=d+1$ by Lemma \ref{L;primemodule} (i).
\end{defn}
\begin{lem}\label{L;submoduleseries}
Let $n\in \mathbb{N}_0$.
\begin{enumerate}[(i)]
\item [\em (i)] If $S_m=\varnothing$ for every $n<m\in \mathbb{N}$, then $W_m=\{\mathbf{0}\}$ for every $n<m\in \mathbb{N}$.
\item [\em (ii)] $W_n/W_{n+1}$ has an $\F$-basis $\{ E_i^*\mathbf{1}+W_{n+1}: i\in S_n\}$ of cardinality $|S_n|$.
\end{enumerate}
\end{lem}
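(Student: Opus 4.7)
The plan is to handle (ii) first by directly exhibiting the proposed cosets as a basis of $W_n/W_{n+1}$, and then to deduce (i) by essentially the same $p$-adic valuation bookkeeping applied to the generating set of $W_m$.

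For (ii), I would begin with the observation that $W_n = \langle\{E_i^*\mathbf{1} : i \in [0,d],\ p^n \mid k_i\}\rangle_\F$ uniformly for every $n \in \mathbb{N}_0$: this is the definition for $n \geq 1$, and it also agrees with the definition of $W_0$ because $p^0 = 1$ divides every $k_i$. Now split each generator of $W_n$: if $p^n \mid k_i$ then either $p^{n+1} \mid k_i$, in which case $E_i^*\mathbf{1} \in W_{n+1}$ and its coset is trivial, or $p^{n+1} \nmid k_i$, in which case $i \in S_n$. This immediately yields the spanning statement for $\{E_i^*\mathbf{1} + W_{n+1} : i \in S_n\}$ in $W_n/W_{n+1}$. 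For linear independence, any relation $\sum_{i \in S_n} c_i E_i^*\mathbf{1} \in W_{n+1}$ unfolds to $\sum_{i \in S_n} c_i E_i^*\mathbf{1} = \sum_{j \in J} d_j E_j^*\mathbf{1}$ with $J = \{j \in [0,d] : p^{n+1} \mid k_j\}$. Since $S_n \cap J = \varnothing$ by the very definition of $S_n$, this is an $\F$-linear relation on distinct elements of $\{E_i^*\mathbf{1} : i \in [0,d]\}$, so Lemma \ref{L;primemodule}(i) forces every $c_i$ (and every $d_j$) to vanish. The same conclusion also shows the cosets are pairwise distinct, which gives the cardinality $|S_n|$.

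For (i), the hypothesis that $S_m = \varnothing$ for every $m \in \mathbb{N}$ with $m > n$ says that no valency $k_j$ has $p$-adic valuation exceeding $n$. Consequently, if $p^m \mid k_i$ for some $m > n$, then the unique $q \in \mathbb{N}_0$ with $i \in S_q$ satisfies $q \geq m > n$, contradicting $S_q = \varnothing$. Hence the generating set of $W_m$ given by the uniform description above is empty, so $W_m = \{\mathbf{0}\}$.

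I do not expect a substantial obstacle: the only point that requires a small amount of care is the boundary case $n = 0$ in (ii), which only works because $W_0$ can be rephrased via the vacuous divisibility condition $p^0 \mid k_i$. Once that reformulation is fixed, Lemma \ref{L;primemodule}(i) supplies all the needed independence and the argument is just a clean partition of generators by $p$-adic valuation.
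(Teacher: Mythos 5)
Your proof is correct and follows essentially the same route as the paper: the paper likewise rewrites $W_m$ as the span of $\{E_i^*\mathbf{1}: i\in\bigcup_{m\leq q}S_q\}$ (your partition of generators by $p$-adic valuation) and invokes Lemma \ref{L;primemodule}(i) together with the definitions of $W_n$ and $S_n$ for part (ii). Your write-up just spells out the spanning and independence details that the paper leaves implicit.
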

\begin{proof}
By Definition \ref{D;primarymodule} and the definition of $S_n$, $W_m=\langle \{E_i^*\mathbf{1}: i\in \bigcup_{m\leq q\in \mathbb{N}}S_q\}\rangle_\F$ for every $n<m\in \mathbb{N}$. By hypotheses, $W_m=\{\mathbf{0}\}$ for every $n<m\in \mathbb{N}$. (i) is shown.
(ii) is proved by combining Lemma \ref{L;primemodule} (i), Definition \ref{D;primarymodule}, the definition of $S_n$.
\end{proof}
The following lemma contains more properties of the objects in Definition \ref{D;primarymodule}.
\begin{lem}\label{L;primarymoduleproperty}
The following statements hold:
\begin{enumerate}[(i)]
\item [\em (i)] $W_1$ is the unique maximal $T$-submodule of $W_0$.
\item [\em (ii)] $W_0$ is an indecomposable $T$-module.
\item [\em (iii)] $W_0/W_1$ is an irreducible $T$-module.
\item [\em (iv)] $W_0$ is an irreducible $T$-module if and only if $S$ is a $p'$-valenced scheme.
\item [\em (v)] Let $n\in \mathbb{N}_0$ and $U$ denote a $T$-submodule of $W_n/W_{n+1}$. Then there do not exist $T$-submodules $V$, $W$ of $W_{n+1}$ such that $W\subset V$ and $U\cong V/W$ as $T$-modules.
\end{enumerate}
\end{lem}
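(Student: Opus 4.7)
The plan is to handle parts (i)--(iv) by first isolating the key mechanism, and then to treat part (v) separately, which is where the main difficulty lies. The central observation behind (i)--(iv) is the \emph{cyclicity claim}: for every $j\in S_0$, $T\cdot E_j^*\mathbf{1}=W_0$. To prove the claim, I would use \eqref{Eq;preliminary2} to see that $J=\sum_\ell A_\ell$ lies in $T$, and then \eqref{Eq;preliminary5} gives $JE_j^*\mathbf{1}=\overline{k_j}\mathbf{1}$; since $j\in S_0$ means $p\nmid k_j$, the scalar $\overline{k_j}$ is a unit of $\F$, so $\mathbf{1}\in T\cdot E_j^*\mathbf{1}$, and applying each $E_i^*\in T$ then yields $E_i^*\mathbf{1}\in T\cdot E_j^*\mathbf{1}$ for every $i\in[0,d]$, forcing equality with $W_0$.

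With the cyclicity claim in hand, part (i) is quick: given $v\in W_0\setminus W_1$, its expansion in the basis of Lemma \ref{L;primemodule} (i) has a nonzero coefficient $c_j$ at some $j\in S_0$ by the definition of $W_1$; applying $E_j^*$ and using \eqref{Eq;preliminary3} gives $E_j^*\mathbf{1}=c_j^{-1}E_j^*v\in T\cdot v$, whence $T\cdot v=W_0$ by the claim. Thus every proper $T$-submodule lies in $W_1$; since $k_0=1$ forces $0\in S_0$, Lemma \ref{L;primemodule} (i) shows $W_1\ne W_0$, and $W_1$ is the unique maximal $T$-submodule. Part (ii) is then immediate, for any decomposition $W_0=A\oplus B$ with both summands nonzero would put $A$ and $B$ inside $W_1$ and give the contradiction $W_0\subseteq W_1$. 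Part (iii) follows from (i) via the correspondence between $T$-submodules of $W_0/W_1$ and those of $W_0$ containing $W_1$. For part (iv), $W_1=\{\mathbf 0\}$ if and only if no $k_i$ is divisible by $p$, i.e.\ $S_0=[0,d]$; combined with (i) and $W_0\ne\{\mathbf 0\}$, this shows that $W_0$ is irreducible precisely when $W_1=\{\mathbf 0\}$, i.e.\ precisely when $S$ is $p'$-valenced.

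Part (v) is the main obstacle and needs a different idea, namely an element of $T$ that distinguishes $W_n/W_{n+1}$ from everything living inside $W_{n+1}$. The key observation is that for each $i\in S_n$ the idempotent $E_i^*$ annihilates $W_{n+1}$: if $j\in\bigcup_{m>n}S_m$ then $i\ne j$, so \eqref{Eq;preliminary3} gives $E_i^*E_j^*\mathbf{1}=\mathbf 0$. Consequently $E_i^*$ acts as zero on every $T$-submodule $V\subseteq W_{n+1}$, and hence on every quotient $V/W$. Now suppose for contradiction that a $T$-module isomorphism $\phi\colon U\to V/W$ exists for some $V,W$ with $W\subsetneq V\subseteq W_{n+1}$; then $E_i^*\cdot U=\phi^{-1}(E_i^*\cdot(V/W))=\{\mathbf 0\}$ for every $i\in S_n$. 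Expanding an arbitrary $u\in U$ in the basis of Lemma \ref{L;submoduleseries} (ii) as $u=\sum_{i\in S_n}c_i(E_i^*\mathbf{1}+W_{n+1})$, the computation $E_i^*u=c_i(E_i^*\mathbf{1}+W_{n+1})$ via \eqref{Eq;preliminary3} together with the $\F$-linear independence of the basis forces every $c_i$ to vanish, so $u=\mathbf 0$ and $U=\{\mathbf 0\}$; but $W\subsetneq V$ gives $V/W\ne\{\mathbf 0\}$, contradicting $U\cong V/W$. The delicate point is having a single family of elements of $T$ that simultaneously detects nonvanishing in $W_n/W_{n+1}$ via Lemma \ref{L;submoduleseries} (ii) and annihilates $W_{n+1}$ via \eqref{Eq;preliminary3}; the dual idempotents $E_i^*$ with $i\in S_n$ accomplish both.
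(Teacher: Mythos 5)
Your proposal is correct and follows essentially the same route as the paper: for (i)--(iv) the engine is that $\overline{k_j}$ is a unit for $j\in S_0$, so acting by $J E_j^*$ (the paper uses $E_\ell^* J E_j^*$) recovers all of $W_0$ from any vector with a nonzero coefficient outside $W_1$; and for (v) the key fact in both arguments is that $E_i^*$ with $i\in S_n$ annihilates $W_{n+1}$ while detecting nonvanishing in $W_n/W_{n+1}$, which combined with $\phi$ commuting with $E_i^*$ yields the contradiction. The only differences are organizational (you phrase (i) as a cyclicity claim and kill all of $U$ in (v) rather than a single element), and all steps check out.
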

\begin{proof}
For (i), by the definitions of $W_0$ and $W_1$, $W_1$ is a proper $T$-submodule of $W_0$. Let $M$ denote a maximal $T$-submodule of $W_0$. According to the definition of $W_0$, we pick $\sum_{i=0}^dc_iE_i^*\mathbf{1}\in M$, where $c_i\in \F$ for every $i\in [0,d]$. For every $i\in [0,d]$, we claim that $c_i=0$ if $p\nmid k_i$. Suppose that there exists $j\in [0,d]$ such that $p\nmid k_j$ and $c_j\neq 0$. Let $\ell\in [0,d]$. Notice that $E_\ell^*JE_j^*\in T$ by \eqref{Eq;preliminary2} and the definition of $T$. Since $M$ is a $T$-submodule of $W_0$, notice that $E_\ell^*JE_j^*(\sum_{i=0}^dc_iE_i^*\mathbf{1})=c_jE_\ell^*JE_j^*\mathbf{1}=c_j\overline{k_j}E_\ell^*\mathbf{1}\in M$ by \eqref{Eq;preliminary3} and \eqref{Eq;preliminary5}. Hence $E_\ell^*\mathbf{1}\in M$ as $c_j\neq 0$ and $p\nmid k_j$. Since $\ell$ is chosen from $[0,d]$ arbitrarily and $M$ is a maximal $T$-submodule of $W_0$, we thus have $W_0\subseteq M\subset W_0$, which is a contradiction. So the desired claim follows. Since $\sum_{i=0}^dc_iE_i^*\mathbf{1}$ is chosen from $M$ arbitrarily, by the proven claim and the definition of $W_1$, observe that $M\subseteq W_1\subset W_0$, which implies that $M=W_1$ since $M$ is a maximal $T$-submodule of $W_0$. Since $M$ is chosen from the set of all maximal $T$-submodules of $W_0$ arbitrarily, $W_1$ is the unique maximal $T$-submodule of $W_0$. (i) is proved.

For (ii), $W_0\neq\{\mathbf{0}\}$ since $\dim_\F W_0=d+1>0$. Suppose that there exist nonzero $T$-modules $N_1$ and $N_2$ such that $W_0=N_1\oplus N_2$. Hence $W_0$ has at least two distinct maximal $T$-submodules, which contradicts (i). Therefore $W_0$ is an indecomposable $T$-module. (ii) is shown.

For (iii), note that (iii) is from (i).

For (iv), by (i) and (iii), $W_0$ is an irreducible $T$-module if and only if $W_1=\{\mathbf{0}\}$. By the definition of $W_1$, note that $W_1=\{\mathbf{0}\}$ if and only if $S$ is a $p'$-valenced scheme. (iv) thus follows.

For (v), there is no loss to assume further that $U\neq \{\mathbf{0}+W_{n+1}\}$. By Lemma \ref{L;submoduleseries} (ii), pick $\sum_{u\in S_n}e_uE_u^*\mathbf{1}+W_{n+1}\in U\setminus\{\mathbf{0}+W_{n+1}\}$, where $e_u\in \F$ for every $u\in S_n$. So there is $v\in S_n$ such that $e_v\neq 0$. As $U$ is a $T$-submodule of $W_n/W_{n+1}$, by \eqref{Eq;preliminary3} and Lemma \ref{L;submoduleseries} (ii), $\mathbf{0}+W_{n+1}\neq e_vE_v^*\mathbf{1}+W_{n+1}=E_v^*(\sum_{u\in S_n}e_uE_u^*\mathbf{1}+W_{n+1})\in U$. Suppose that there exist $T$-submodules $V$, $W$ of $W_{n+1}$ such that $W\subset V$ and $U\cong V/W$ as $T$-modules. Let $\phi$ be a $T$-isomorphism from $U$ and $V/W$. Since $\phi$ is injective and $\mathbf{0}+W_{n+1}\neq e_vE_v^*\mathbf{1}+W_{n+1}\in U$, notice that $\mathbf{0}+W\neq\phi(e_vE_v^*\mathbf{1}+W_{n+1})\in V/W$. Since $v\in S_n$, by the definitions of $S_n$ and $W_{n+1}$, \eqref{Eq;preliminary3} tells us that $E_v^*\in \mathrm{Ann}_T(W_{n+1})$. In particular, as $W\subset V\subseteq W_{n+1}$, notice that $E_v^*\phi(e_vE_v^*\mathbf{1}+W_{n+1})=\mathbf{0}+W$. As $\phi$ is a $T$-isomorphism, by \eqref{Eq;preliminary3} again, we thus can deduce that
$$\mathbf{0}+W\neq\phi(e_vE_v^*\mathbf{1}+W_{n+1})=\phi(E_v^*(e_vE_v^*\mathbf{1}+W_{n+1}))
=E_v^*\phi(e_vE_v^*\mathbf{1}+W_{n+1})=\mathbf{0}+W,$$
which is impossible. So there are not $T$-submodules $V$, $W$ of $W_{n+1}$ such that $W\subset V$ and $U\cong V/W$ as $T$-modules. (v) is proved.
\end{proof}
For further discussion, we need the following notation and lemma.
\begin{nota}\label{N;notation1}
Let $\sim$ be a binary relation on $[0,d]$, where, for any $i, j\in [0,d]$, $i\sim j$ if and only if the following conditions hold:
\begin{enumerate}[(i)]
\item There are $m\in \mathbb{N}_0$ and sequence $i_0, j_0, \ell_0, i_1, j_1, \ell_1,\ldots, i_m, j_m, \ell_m$ of $[0,d]$ such that $i_0=i$, $\ell_m=j$, $p\nmid \prod_{a=0}^mp_{\ell_aj_a'}^{i_a}$, and $\ell_{b-1}=i_b$ for every $b\in [1,m]$;
\item There are $n\in \mathbb{N}_0$ and sequence $r_0, s_0, t_0, r_1, s_1, t_1,\ldots, r_n, s_n, t_n$ of  $[0,d]$ such that $r_0=j$, $t_n=i$, $p\nmid \prod_{c=0}^np_{t_cs_c'}^{r_c}$, and $t_{e-1}=r_e$ for every $e\in [1,n]$.
\end{enumerate}
\end{nota}
\begin{eg}\label{E;notation1}
Let us illustrate the definition of $\sim$ by examples. Assume that $p>2$ and $S$ is the scheme of order $12$, No. $21$ in \cite{HM}. Notice that $S=\{R_0, R_1, R_2, R_3, R_4\}$, where $3'=p_{44}^3=p_{33}^4=4$.
So $3\sim 4$ as the sequence $3,3,4$ satisfies Notation \ref{N;notation1} (i) and the sequence $4,4,3$ satisfies Notation \ref{N;notation1} (ii). Notice that $3\sim 3$ as the sequence $3,3,4,4,4,3$ satisfies Notation \ref{N;notation1} (i) and (ii).
\end{eg}
\begin{lem}\label{L;equivalencerelation}
The binary relation $\sim$ is an equivalence relation on $[0,d]$.
\end{lem}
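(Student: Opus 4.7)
The plan is to verify the three defining properties of an equivalence relation directly from the two conditions in Notation \ref{N;notation1}. Reflexivity relies on a trivial length-zero sequence together with the fact that $p_{i0}^i = 1$; symmetry is immediate from the manifest symmetry of conditions (i) and (ii); transitivity is obtained by concatenating witnessing sequences, which is possible because condition (i) forces matching endpoints at the junction and the product of structure constants splits as a product over the two pieces.

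For reflexivity, fix $i \in [0,d]$. I would take $m = n = 0$ and the one-triple sequences $i_0,j_0,\ell_0 = i,0,i$ and $r_0,s_0,t_0 = i,0,i$. Since $R_0$ is symmetric we have $0' = 0$, and the count $p_{i0}^{i}$ equals the number of $z \in X$ with $(x,z) \in R_i$ and $(z,y) \in R_0$ for a fixed pair $(x,y) \in R_i$, which forces $z = y$ and yields $p_{i0}^i = 1$. Thus $p \nmid p_{\ell_0 j_0'}^{i_0} = 1$ and $p \nmid p_{t_0 s_0'}^{r_0} = 1$, so $i \sim i$. For symmetry, interchanging $i$ and $j$ swaps conditions (i) and (ii) of Notation \ref{N;notation1}, so $i \sim j$ immediately implies $j \sim i$.

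For transitivity, assume $i \sim j$ and $j \sim k$. Let $i_0,j_0,\ell_0,\ldots,i_m,j_m,\ell_m$ witness (i) for $i \sim j$, and let $i_0',j_0',\ell_0',\ldots,i_{m'}',j_{m'}',\ell_{m'}'$ witness (i) for $j \sim k$. I would concatenate these into the sequence of length $m+m'+2$ triples obtained by appending the second sequence to the first. The starting entry is $i_0 = i$; the final entry is $\ell_{m'}' = k$; the gluing condition $\ell_{b-1} = i_b$ holds in each piece by hypothesis and at the join because $\ell_m = j = i_0'$. The product of structure constants factors as $\bigl(\prod_{a=0}^{m} p_{\ell_a j_a'}^{i_a}\bigr)\bigl(\prod_{c=0}^{m'} p_{\ell_c' {j_c'}'}^{i_c'}\bigr)$, which is coprime to $p$ as each factor is. This gives condition (i) for $i \sim k$; applying the same concatenation to the (ii)-sequences of $j \sim k$ followed by $i \sim j$ yields condition (ii) for $i \sim k$. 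Hence $i \sim k$.

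\emph{Main difficulty.} The argument is essentially re-indexing and bookkeeping; no deep input beyond Lemma \ref{L;trigular} is used, and in fact Lemma \ref{L;trigular} is only needed implicitly for the standard identity $p_{i0}^i = 1$. The only point requiring attention is that, at the junction between the two concatenated sequences, both the matching condition on consecutive indices and the non-divisibility of the total product must be checked; these are immediate once the common endpoint $\ell_m = j = i_0'$ is recognised.
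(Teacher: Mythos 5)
Your proposal is correct and follows essentially the same route as the paper: reflexivity via the single triple $i,0,i$ using $0'=0$ and $p_{i0}^i=1$, symmetry by swapping the roles of conditions (i) and (ii) of Notation \ref{N;notation1}, and transitivity by concatenating the witnessing sequences (with the (ii)-sequences glued in the order $j\sim k$ then $i\sim j$, exactly as in the paper). The only cosmetic caveat is that your primed labels $i_c', j_c', \ell_c'$ for the second witnessing sequence collide with the paper's use of $'$ for the involution on $[0,d]$, so they should be renamed to avoid ambiguity in expressions like $p_{\ell_c' (j_c')'}^{i_c'}$.
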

\begin{proof}
Let $i,j,\ell\in [0,d]$. Notice that $0'=0$ and $p_{i0}^i=1$ by the definition of $p_{i0}^i$. So the sequence $i, 0, i$ satisfies Notation \ref{N;notation1} (i) and (ii), which implies that $i\sim i$. Since $i$ is chosen from $[0,d]$ arbitrarily, $\sim$ is reflexive.

Assume that $i\sim j$. According to Notation \ref{N;notation1} (i) and (ii), we have the following two facts:
\begin{enumerate}[(i)]
\item [(i)]There are $m_1\in \mathbb{N}_0$ and sequence $i_0, j_0, \ell_0, i_1, j_1, \ell_1, \ldots, i_{m_1}, j_{m_1}, \ell_{m_1}$ of $[0,d]$ such that $i_0=i$, $\ell_{m_1}=j$, $p\nmid \prod_{a=0}^{m_1}p_{\ell_aj_a'}^{i_a}$, and $\ell_{b-1}=i_b$ for every $b\in [1, m_1]$.
\item [(ii)]There are $n_1\in \mathbb{N}_0$ and sequence $r_0, s_0, t_0, r_1, s_1, t_1,\ldots, r_{n_1}, s_{n_1}, t_{n_1}$ of $[0,d]$ such that $r_0=j$, $t_{n_1}=i$, $p\nmid \prod_{c=0}^{n_1}p_{t_cs_c'}^{r_c}$, and $t_{e-1}=r_e$ for every $e\in [1, n_1]$.
\end{enumerate}
We thus have $j\sim i$ as Notation \ref{N;notation1} (i) follows from (ii) and Notation \ref{N;notation1} (ii) follows from (i). So $\sim$ is symmetric.

Assume further that $j\sim \ell$. According to Notation \ref{N;notation1} (i) and (ii), we have the following two facts:
\begin{enumerate}[(i)]
\item [(iii)] There are $m_2\in \mathbb{N}_0$ and sequence $u_0, v_0, w_0, u_1, v_1, w_1,\ldots, u_{m_2}, v_{m_2}, w_{m_2}$ of $[0,d]$ such that $u_0=j$, $w_{m_2}=\ell$, $p\nmid \prod_{f=0}^{m_2}p_{w_fv_f'}^{u_f}$, and $w_{g-1}=u_g$ for every $g\in [1, m_2]$.
\item [(iv)] There are $n_2\in \mathbb{N}_0$ and sequence $x_0, y_0, z_0, x_1, y_1, z_1,\ldots, x_{n_2}, y_{n_2}, z_{n_2}$ of $[0,d]$ such that $x_0=\ell$, $z_{n_2}=j$, $p\nmid \prod_{h=0}^{n_2}p_{z_hy_h'}^{x_h}$, and $z_{k-1}=x_k$ for every $k\in [1, n_2]$.
\end{enumerate}
Set $i_{m_1+f+1}=u_f$, $j_{m_1+f+1}=v_f$, and $\ell_{m_1+f+1}=w_f$ for every $f\in [0, m_2]$. We also put $x_{n_2+c+1}=r_c$, $y_{n_2+c+1}=s_c$, and $z_{n_2+c+1}=t_c$ for every $c\in [0, n_1]$. By (i) and (iii), observe that $i_0=i$, $\ell_{m_1+m_2+1}=\ell$, $p\nmid \prod_{r=0}^{m_1+m_2+1}p_{\ell_rj_r'}^{i_r}$, and $\ell_{s-1}=i_s$ for every $s\in [1, m_1+m_2+1]$. According to (ii) and (iv), observe that $x_0=\ell$, $z_{n_1+n_2+1}=i$, $p\nmid \prod_{u=0}^{n_1+n_2+1}p_{z_uy_u'}^{x_u}$, and $z_{v-1}=x_v$ for every $v\in[1, n_1+n_2+1]$. We thus deduce that $i\sim \ell$ since the sequence $i_0, j_0,\ell_0, i_1, j_1, \ell_1,\ldots, i_{m_1+m_2+1}, j_{m_1+m_2+1}, \ell_{m_1+m_2+1}$ satisfies Notation \ref{N;notation1} (i) and the sequence $x_0, y_0, z_0, x_1, y_1, z_1, \ldots, x_{n_1+n_2+1}, y_{n_1+n_2+1}, z_{n_1+n_2+1}$ satisfies Notation \ref{N;notation1} (ii). Therefore $\sim$ is transitive. The desired lemma thus follows from the definition of an equivalence relation on a set.
\end{proof}
For further discussion, we also need the following notation and lemma.
\begin{nota}\label{N;notation2}
Let $n\in \mathbb{N}_0$ and $\sim_n$ denote a binary relation on $S_n$, where, for any $i,j \in S_n$, $i\sim_n j$ if and only if $i\sim j$. As $S_n\subseteq [0,d]$, by Lemma \ref{L;equivalencerelation}, notice that $\sim_n$ is an equivalence relation on $S_n$. If $S_n\neq \varnothing$, let $Q_n$ be the quotient set of $S_n$ with respect to $\sim_n$. If $S_n=\varnothing$, set $Q_n=\varnothing$.
\end{nota}
\begin{lem}\label{L;primarymoduleadditionalproperty}
Assume that $n\in \mathbb{N}_0$, $Q_n\neq \varnothing$, and $C\in Q_n$.
\begin{enumerate}[(i)]
\item [\em (i)] $\varnothing\neq C\subseteq S_n$.
\item [\em (ii)]$\langle\{E_i^*\mathbf{1}+W_{n+1}: i\in C\}\rangle_\F$ is a nonzero $T$-submodule of $W_n/W_{n+1}$.
\item [\em (iii)] $\langle\{E_i^*\mathbf{1}+W_{n+1}: i\in C\}\rangle_\F$ is an irreducible  $T$-submodule of $W_n/W_{n+1}$.
\end{enumerate}
\end{lem}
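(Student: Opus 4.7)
Part (i) is immediate: as $C \in Q_n$ is an equivalence class of $\sim_n$ on $S_n$ (Notation \ref{N;notation2}), it is nonempty and contained in $S_n$. For part (ii), set $U_C := \langle\{E_i^*\mathbf{1}+W_{n+1}: i\in C\}\rangle_\F$; by (i), Lemma \ref{L;submoduleseries}(ii) shows $U_C$ is nonzero. By Lemma \ref{L;generalresults}(iii) and \eqref{Eq;preliminary6}, $T$ is spanned by products of elements of $T_0 := \{E_a^*A_bE_c^* : a,b,c \in [0,d]\}$, so to show $U_C$ is a $T$-submodule it suffices to verify closure under each such generator. For $i \in C$, by \eqref{Eq;preliminary3} and Lemma \ref{L;generalresults}(i),
\[
E_a^*A_bE_c^*(E_i^*\mathbf{1}+W_{n+1}) = \delta_{ci}\,\overline{p_{ib'}^a}\,E_a^*\mathbf{1}+W_{n+1},
\]
which is zero unless $c=i$ and $p \nmid p_{ib'}^a$. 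Assuming both, Lemma \ref{L;trigular} gives $k_a p_{ib'}^a = k_i p_{ab}^i$; since $i \in S_n$ and $p \nmid p_{ib'}^a$, this forces $p^n \mid k_a$. If $p^{n+1} \mid k_a$, then $E_a^*\mathbf{1} \in W_{n+1}$ and the image vanishes; otherwise $a \in S_n$, and the same identity now also yields $p \nmid p_{ab}^i$, so the length-one sequences $(a,b,i)$ and $(i,b',a)$ witness Notation \ref{N;notation1}(i) and (ii) respectively for $a \sim i$. Hence $a \in C$ and the image lies in $U_C$.

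For part (iii), let $V$ be a nonzero $T$-submodule of $U_C$; by Lemma \ref{L;submoduleseries}(ii) one may pick $v = \sum_{i \in C} c_i(E_i^*\mathbf{1}+W_{n+1}) \in V$ with $c_j \neq 0$ for some $j \in C$. Acting by $E_j^*$ and invoking \eqref{Eq;preliminary3} gives $c_j(E_j^*\mathbf{1}+W_{n+1}) \in V$, so $E_j^*\mathbf{1}+W_{n+1} \in V$. For any $i \in C$, the relation $i \sim j$ and Notation \ref{N;notation1}(i) yield a word $\tau = \prod_{a=0}^m(E_{i_a}^*A_{j_a}E_{\ell_a}^*) \in T$ with $i_0 = i$, $\ell_m = j$, $\ell_{b-1} = i_b$ for $b \in [1,m]$, and $p \nmid \prod_a p_{\ell_a j_a'}^{i_a}$. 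A right-to-left evaluation of $\tau E_j^*\mathbf{1}$, repeatedly using \eqref{Eq;preliminary3} with the chain condition to collapse each $E_{\ell_{b-1}}^*E_{i_b}^*$ to $E_{i_b}^*$ and Lemma \ref{L;generalresults}(i) to extract scalars, produces $\tau E_j^*\mathbf{1} = \overline{\prod_a p_{\ell_a j_a'}^{i_a}}\,E_i^*\mathbf{1}$, a nonzero multiple of $E_i^*\mathbf{1}$. Hence $E_i^*\mathbf{1}+W_{n+1} \in V$; varying $i$ over $C$ gives $V = U_C$.

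The main obstacle is the closure step in (ii): a priori $E_a^*A_bE_c^*(E_i^*\mathbf{1})$ can be a nonzero multiple of $E_a^*\mathbf{1}$ with $a$ outside both $C$ and the layer $W_{n+1}$. Lemma \ref{L;trigular} is exactly what pins $a$ to $\bigcup_{m \geq n}S_m$, and the symmetric role of the valuation equation $k_a p_{ib'}^a = k_i p_{ab}^i$ inside $S_n$ simultaneously supplies the forward and backward one-step sequences demanded by both clauses of Notation \ref{N;notation1}. Part (iii) is then a routine orbit argument using the symmetry of $\sim$.
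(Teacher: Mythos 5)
Your proposal is correct and follows essentially the same route as the paper: the closure step in (ii) rests on the valency identity $k_ap_{ib'}^a=k_ip_{ab}^i$ of Lemma \ref{L;trigular} to force $a$ into $W_{n+1}$ or into $C$, and (iii) is the same transport argument along a sequence witnessing Notation \ref{N;notation1}(i). The only cosmetic differences are that you argue (ii) directly rather than by contradiction and that in (iii) you first isolate $E_j^*\mathbf{1}+W_{n+1}$ by acting with $E_j^*$ before applying the word $\tau$, whereas the paper applies the word to the arbitrary element at once.
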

\begin{proof}
As $Q_n\neq \varnothing$, by the definition of $Q_n$, $Q_n$ is a partition of $S_n$. (i) thus follows.

By (i) and Lemma \ref{L;submoduleseries} (ii), $\{\mathbf{0}+W_{n+1}\}\neq \langle\{E_i^*\mathbf{1}+W_{n+1}: i\in C\}\rangle_\F\subseteq W_n/W_{n+1}$.
Let $a,b,c\in [0,d]$. We claim that $E_a^*A_bE_c^*(E_h^*\mathbf{1}+W_{n+1})\in \langle\{E_i^*\mathbf{1}+W_{n+1}: i\in C\}\rangle_\F$ for every $h\in C$. We suppose that $E_a^*A_bE_c^*(E_j^*\mathbf{1}+W_{n+1})\notin\langle\{E_i^*\mathbf{1}+W_{n+1}: i\in C\}\rangle_\F$ and $j\in C$. Therefore $E_a^*A_bE_c^*(E_j^*\mathbf{1}+W_{n+1})\neq \mathbf{0}+W_{n+1}$. So $E_a^*A_bE_c^*E_j^*\mathbf{1}\notin W_{n+1}$, which implies that $c=j$ and
\begin{align}\label{Eq;preliminary7}
\mathbf{0}\neq \overline{p_{jb'}^a}E_a^*\mathbf{1}=E_a^*A_bE_c^*E_j^*\mathbf{1}\notin W_{n+1}
\end{align}
by \eqref{Eq;preliminary3} and Lemma \ref{L;generalresults} (i). So we have $p\nmid p_{jb'}^a$. Notice that  $E_a^*\mathbf{1}+W_{n+1}\in W_n/W_{n+1}$ as $E_a^*A_bE_c^*(E_j^*\mathbf{1}+W_{n+1})\in W_n/W_{n+1}$, \eqref{Eq;preliminary7} holds, and $p\nmid p_{jb'}^a$. Moreover, notice that $E_a^*\mathbf{1}\notin W_{n+1}$ as \eqref{Eq;preliminary7} holds. Therefore $a\in S_n$ by Lemmas \ref{L;submoduleseries} (ii) and \ref{L;primemodule} (i). Since $j\in C$, by (i), notice that $j\in S_n$. As $a,j\in S_n$, $p\nmid p_{jb'}^a$, and $k_ap_{jb'}^a=k_jp_{ab}^j$ by Lemma \ref{L;trigular}, notice that $p\nmid p_{ab}^j$ by the definition of $S_n$. We thus have $j\sim a$ and $j\sim_n a$ since the sequence $j, b', a$ satisfies Notation \ref{N;notation1} (i) and the sequence $a,b,j$ satisfies Notation \ref{N;notation1} (ii). As $j\in C$, $j\sim_n a$, and $C$ is an equivalence class of $S_n$ with respect to $\sim_n$, notice that $a\in C$. So $E_a^*A_bE_c^*(E_j^*\mathbf{1}+W_{n+1})\in \langle\{E_i^*\mathbf{1}+W_{n+1}: i\in C\}\rangle_\F$ by \eqref{Eq;preliminary7}. We thus have a contradiction as $E_a^*A_bE_c^*(E_j^*\mathbf{1}+W_{n+1})\notin\langle\{E_i^*\mathbf{1}+W_{n+1}: i\in
C\}\rangle_\F$. The desired claim thus follows. As we have known that $\langle\{E_i^*\mathbf{1}+W_{n+1}: i\in C\}\rangle_\F$ is a nonzero $\F$-linear subspace of $W_n/W_{n+1}$ and $a,b,c$ are chosen from $[0,d]$ arbitrarily, (ii) is proved by combining Lemma \ref{L;generalresults} (iii), \eqref{Eq;preliminary6}, and the proven claim.

For (iii), we first suppose that the $T$-module $\langle\{E_i^*\mathbf{1}+W_{n+1}: i\in C\}\rangle_\F$ in (ii) has a nonzero proper $T$-submodule $U$. Pick $\sum_{i\in C}c_iE_i^*\mathbf{1}+W_{n+1}\in U\setminus\{\mathbf{0}\}$, where $c_i\in \F$ for every $i\in C$. So there exists $k\in C$ such that $c_k\neq 0$. Let $\ell \in C$. Notice that $\ell\sim_n k$ and $\ell\sim k$ as $C$ is an equivalence class of $S_n$ with respect to $\sim_n$. By Notation \ref{N;notation1} (i), there are $m\in \mathbb{N}_0$ and sequence $i_0, j_0, \ell_0, i_1, j_1, \ell_1,\ldots, i_m, j_m, \ell_m$ of $[0,d]$ such that $i_0=\ell$, $\ell_m=k$, $p\nmid \prod_{e=0}^mp_{\ell_ej_e'}^{i_e}$, and $\ell_{f-1}=i_f$ for every $f\in [1,m]$. Let $\gamma$ denote $\prod_{e=0}^m{p_{\ell_ej_e'}^{i_e}}$. Since $c_k\neq 0$ and $p\nmid \gamma$, notice that $c_k\overline{\gamma}\neq 0$. Since $U$ is a $T$-submodule of the $T$-module $\langle\{E_i^*\mathbf{1}+W_{n+1}: i\in C\}\rangle_\F$ in (ii), by \eqref{Eq;preliminary6}, \eqref{Eq;preliminary3}, and Lemma \ref{L;generalresults} (i), notice that
$\prod_{e=0}^m(E_{i_e}^*A_{j_e}E_{\ell_e}^*)(\sum_{i\in C}c_iE_i^*\mathbf{1}+W_{n+1})=
c_k\overline{\gamma}E_\ell^*\mathbf{1}+W_{n+1}\in U$. We thus have $E_\ell^*\mathbf{1}+W_{n+1}\in U$ as $c_k\overline{\gamma}\neq 0$. Since $\ell$ is chosen from $C$ arbitrarily, notice that $\langle\{E_i^*\mathbf{1}+W_{n+1}: i\in C\}\rangle_\F\subseteq U\subset\langle\{E_i^*\mathbf{1}+W_{n+1}: i\in C\}\rangle_\F$, which is impossible. So the $T$-module $\langle\{E_i^*\mathbf{1}+W_{n+1}: i\in C\}\rangle_\F$ in (ii) has no nonzero proper $T$-submodule. (iii) thus follows.
\end{proof}
We now can introduce the following notation.
\begin{nota}\label{N;notation3}
Assume that $n\in \mathbb{N}_0$, $Q_n\neq \varnothing$, and $C\in Q_n$. Let $Irr_n(C)$ denote the irreducible $T$-submodule of $W_n/W_{n+1}$ in Lemma \ref{L;primarymoduleadditionalproperty} (iii). By combining Lemmas \ref{L;primarymoduleadditionalproperty} (i), (iii), and \ref{L;submoduleseries} (ii), notice that $Irr_n(C)$ has an $\F$-basis $\{E_i^*\mathbf{1}+W_{n+1}: i\in C\}$ of cardinality $|C|$. Write $B_n(C)$ for $\{E_i^*\mathbf{1}+W_{n+1}: i\in C\}$.
\end{nota}
We need the following three lemmas to deduce the main result of this section.
\begin{lem}\label{L;nonisomorphism}
Assume that $n_m\in \mathbb{N}_0$, $Q_{n_m}\neq \varnothing$, and $C_m\in Q_{n_m}$ for every $m\in [1,2]$. Then $Irr_{n_1}(C_1)\cong Irr_{n_2}(C_2)$ as $T$-modules if and only if $n_1=n_2$ and $C_1=C_2$.
\end{lem}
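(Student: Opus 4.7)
The ``if'' direction is immediate from the definition of $Irr_n(C)$ in Notation \ref{N;notation3}, so the content is in the ``only if'' direction, which I would prove by contradiction, splitting into two cases according to whether $n_1$ equals $n_2$.

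For the case $n_1 \neq n_2$, without loss of generality take $n_1 < n_2$. By Notation \ref{N;notation3} and the correspondence theorem, $Irr_{n_2}(C_2) = V/W_{n_2+1}$ for some $T$-submodule $V$ of $W_{n_2}$ with $W_{n_2+1} \subsetneq V$ (strict since $Irr_{n_2}(C_2)$ is nonzero). Since $n_1 + 1 \leq n_2$ and the modules $W_m$ are nested decreasing in $m$, both $V$ and $W_{n_2+1}$ are $T$-submodules of $W_{n_1+1}$. A hypothetical $T$-isomorphism $Irr_{n_1}(C_1) \cong V/W_{n_2+1}$ would then directly contradict Lemma \ref{L;primarymoduleproperty}(v), applied with $n = n_1$ and $U = Irr_{n_1}(C_1) \subseteq W_{n_1}/W_{n_1+1}$.

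For the case $n_1 = n_2 = n$ but $C_1 \neq C_2$, the plan is to distinguish the two modules via their $T$-annihilators. Since $C_1$ and $C_2$ are distinct equivalence classes in $Q_n$, they are disjoint, so pick (without loss of generality) $i \in C_1 \setminus C_2$. Using \eqref{Eq;preliminary3}, a direct computation on the basis $B_n(C_2)$ from Notation \ref{N;notation3} gives $E_i^*(E_j^*\mathbf{1} + W_{n+1}) = \delta_{ij} E_i^*\mathbf{1} + W_{n+1} = \mathbf{0} + W_{n+1}$ for every $j \in C_2$, so $E_i^* \in \mathrm{Ann}_T(Irr_n(C_2))$. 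On the other hand, $E_i^*(E_i^*\mathbf{1} + W_{n+1}) = E_i^*\mathbf{1} + W_{n+1}$, which is a nonzero element of $Irr_n(C_1)$ by Lemma \ref{L;submoduleseries}(ii) since $i \in S_n$, so $E_i^* \notin \mathrm{Ann}_T(Irr_n(C_1))$. Since isomorphic $T$-modules share the same annihilator ideal, this contradicts $Irr_n(C_1) \cong Irr_n(C_2)$.

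I do not expect any step to be a real obstacle; the proof is essentially a packaging of two separate ideas (the filtration-level argument via Lemma \ref{L;primarymoduleproperty}(v), and the annihilator argument via the dual idempotents $E_i^*$). The only spot that requires a little care is the first case, where one must exhibit $Irr_{n_2}(C_2)$ in the form $V/W$ with $V,W \subseteq W_{n_1+1}$ and $W \subsetneq V$ so that Lemma \ref{L;primarymoduleproperty}(v) applies verbatim.
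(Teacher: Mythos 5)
Your proposal is correct and follows essentially the same route as the paper: the $n_1\neq n_2$ case is handled exactly as in the paper via the Correspondence Theorem and Lemma \ref{L;primarymoduleproperty} (v), and your annihilator argument for the case $n_1=n_2$, $C_1\neq C_2$ is just a repackaging of the paper's computation, which applies a hypothetical isomorphism $\phi$ to $E_i^*(E_i^*\mathbf{1}+W_{n+1})$ and uses that $E_i^*$ kills $B_n(C_2)$. No gaps.
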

\begin{proof}
If $n_1=n_2$ and $C_1=C_2$, $Irr_{n_1}(C_1)=Irr_{n_2}(C_2)$ by the definitions of $Irr_{n_1}(C_1)$ and $Irr_{n_2}(C_2)$. So $Irr_{n_1}(C_1)\cong Irr_{n_2}(C_2)$ as $T$-modules. Conversely, we
assume that $Irr_{n_1}(C_1)\cong Irr_{n_2}(C_2)$ as $T$-modules. By the definitions of $Irr_{n_1}(C_1)$ and $Irr_{n_2}(C_2)$, $Irr_{n_m}(C_m)$ is an irreducible $T$-submodule of $W_{n_m}/W_{n_m+1}$ for every $m\in[1,2]$.

If $n_1<n_2$, by the Correspondence Theorem for Modules, there is a $T$-submodule $U$ of $W_{n_2}$ such that $W_{n_2+1}\subset U\subseteq W_{n_1+1}$ and $U/W_{n_2+1}=Irr_{n_2}(C_2)\cong Irr_{n_1}(C_1)$ as $T$-modules. We thus have a contradiction by Lemma \ref{L;primarymoduleproperty} (v). If $n_1>n_2$, according to the Correspondence Theorem for Modules again, there exists a $T$-submodule $V$ of $W_{n_1}$ such that $W_{n_1+1}\subset V\subseteq W_{n_2+1}$ and $V/W_{n_1+1}=Irr_{n_1}(C_1)\cong Irr_{n_2}(C_2)$ as $T$-modules. We also have a contradiction by Lemma \ref{L;primarymoduleproperty} (v). So $n_1=n_2$.

Set $n=n_1=n_2$. Let $\phi$ denote a $T$-isomorphism from $Irr_n(C_1)$ to $Irr_n(C_2)$. Pick $i\in C_1$. Since $B_n(C_1)$ is an $\F$-basis of $Irr_n(C_1)$, $\mathbf{0}+W_{n+1}\neq E_i^*\mathbf{1}+W_{n+1}\in B_n(C_1)$. As $\phi$ is injective, we thus have $\mathbf{0}+W_{n+1}\neq \phi(E_i^*\mathbf{1}+W_{n+1})\in Irr_n(C_2)$. Moreover, as $B_n(C_2)$ is an $\F$-basis of $Irr_n(C_2)$, $\phi(E_i^*\mathbf{1}+W_{n+1})$ is an $\F$-linear combination of the elements of $B_n(C_2)$. Suppose that $C_1\neq C_2$. Since $C_1$ and $C_2$ are distinct equivalence classes of $S_n$ with respect to $\sim_n$, notice that $C_1\cap C_2=\varnothing$ and $i\notin C_2$. By \eqref{Eq;preliminary3}, we thus have $E_i^*(E_j^*\mathbf{1}+W_{n+1})=\mathbf{0}+W_{n+1}$ for every $E_j^*\mathbf{1}+W_{n+1}\in B_n(C_2)$. As \eqref{Eq;preliminary3} holds and $\phi$ is a $T$-isomorphism, we thus can deduce that
$$\mathbf{0}+W_{n+1}\neq \phi(E_i^*\mathbf{1}+W_{n+1})= \phi(E_i^*(E_i^*\mathbf{1}+W_{n+1}))=E_i^*\phi(E_i^*\mathbf{1}+W_{n+1})=\mathbf{0}+W_{n+1},$$
which is a contradiction. We thus have $C_1=C_2$. The proof is now complete.
\end{proof}
\begin{lem}\label{L;decomposition}
Let $n\in \mathbb{N}_0$.
\begin{enumerate}[(i)]
\item [\em (i)] $W_n/W_{n+1}=\{\mathbf{0}+W_{n+1}\}$ if and only if $Q_n=\varnothing$.
\item [\em (ii)] If $Q_n\neq \varnothing$, then $W_{n}/W_{n+1}=\bigoplus_{C\in Q_n}Irr_n(C)$.
\item [\em (iii)] If $Q_n$ contains precisely $m$ elements $C_1, C_2,\ldots, C_m$, then there is a $T$-submodule series $W_{n+1}=U_m\subset U_{m-1}\subset\cdots\subset U_1\subset U_0=W_n$ of $W_n$ such that $U_{q-1}/U_q\cong Irr_n(C_q)$ as $T$-modules for every $q\in [1,m]$.
\end{enumerate}
\end{lem}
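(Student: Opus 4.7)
The plan is to deduce (i)--(iii) from the basis description of $W_n/W_{n+1}$ provided by Lemma \ref{L;submoduleseries} (ii) together with the description of $Irr_n(C)$ and its basis $B_n(C)$ given in Notation \ref{N;notation3}. In particular, the key observation is that, since $Q_n$ is by definition the quotient of $S_n$ by the equivalence relation $\sim_n$, the family $\{B_n(C) : C \in Q_n\}$ partitions the basis $\{E_i^*\mathbf{1} + W_{n+1} : i \in S_n\}$ of $W_n/W_{n+1}$.

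For (i), I would simply note that $W_n/W_{n+1} = \{\mathbf{0} + W_{n+1}\}$ if and only if $S_n = \varnothing$, by Lemma \ref{L;submoduleseries} (ii), and that by Notation \ref{N;notation2} the condition $S_n = \varnothing$ is equivalent to $Q_n = \varnothing$. For (ii), assuming $Q_n \neq \varnothing$, each $Irr_n(C)$ is a $T$-submodule of $W_n/W_{n+1}$ by Lemma \ref{L;primarymoduleadditionalproperty} (ii), and its $\F$-basis $B_n(C)$ is prescribed by Notation \ref{N;notation3}. Since $Q_n$ is a partition of $S_n$, the disjoint union $\bigsqcup_{C\in Q_n} B_n(C)$ coincides with $\{E_i^*\mathbf{1} + W_{n+1} : i \in S_n\}$, which by Lemma \ref{L;submoduleseries} (ii) is an $\F$-basis of $W_n/W_{n+1}$. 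Thus the inner sum $\sum_{C\in Q_n}Irr_n(C)$ exhausts $W_n/W_{n+1}$ and has zero pairwise intersections (a straightforward dimension or $\F$-linear independence check), yielding the $T$-module direct sum decomposition.

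For (iii), I would exploit the decomposition in (ii) via the Correspondence Theorem. Setting $V_q = \bigoplus_{j=q+1}^{m} Irr_n(C_j)$ for $q \in [0,m]$, one gets the chain $V_m = \{\mathbf{0}+W_{n+1}\} \subset V_{m-1} \subset \cdots \subset V_1 \subset V_0 = W_n/W_{n+1}$ of $T$-submodules (the inclusions are strict since each $Irr_n(C_q)$ is nonzero by Lemma \ref{L;primarymoduleadditionalproperty} (ii)). By the Correspondence Theorem for Modules, each $V_q$ pulls back to a unique $T$-submodule $U_q$ of $W_n$ with $W_{n+1} \subseteq U_q$ and $U_q/W_{n+1} = V_q$; in particular $U_0 = W_n$ and $U_m = W_{n+1}$. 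The chain $W_{n+1} = U_m \subset U_{m-1} \subset \cdots \subset U_1 \subset U_0 = W_n$ is then strict, and the Third Isomorphism Theorem gives
\[
U_{q-1}/U_q \;\cong\; (U_{q-1}/W_{n+1})\,/\,(U_q/W_{n+1}) \;=\; V_{q-1}/V_q \;\cong\; Irr_n(C_q)
\]
as $T$-modules for every $q \in [1,m]$, as required.

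None of the three parts should be difficult; the only conceptual point is to be careful about the correspondence between $T$-submodules of $W_n$ containing $W_{n+1}$ and $T$-submodules of the quotient $W_n/W_{n+1}$ in part (iii), and to verify strictness of the filtration using the fact that each $Irr_n(C_q)$ is nonzero. The rest is bookkeeping with bases.
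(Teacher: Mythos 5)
Your proposal is correct and follows essentially the same route as the paper: part (i) via Lemma \ref{L;submoduleseries} (ii) and the definition of $Q_n$, part (ii) via the partition of the basis $\{E_i^*\mathbf{1}+W_{n+1}: i\in S_n\}$ into the bases $B_n(C)$, and part (iii) via the Correspondence Theorem and the Third Isomorphism Theorem applied to the tails $V_q$ of the decomposition in (ii). One phrasing caveat: for more than two summands, ``zero pairwise intersections'' alone does not yield a direct sum --- but your parenthetical linear-independence check (the disjoint bases union to a basis) is exactly the argument the paper uses and does suffice.
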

\begin{proof}
For (i), by the definition of $Q_n$, observe that $Q_n=\varnothing$ if and only if $S_n=\varnothing$. We also have $\dim_\F W_n/W_{n+1}=|S_n|$ by Lemma \ref{L;submoduleseries} (ii). So $W_n/W_{n+1}=\{\mathbf{0}+W_{n+1}\}$ if and only if $Q_n=\varnothing$. The proof of (i) is now complete.

For (ii), as $Q_n\neq \varnothing$, the definition of $Q_n$ tells us that $Q_n$ is a partition of $S_n$. By Lemmas \ref{L;primarymoduleadditionalproperty} (i) and \ref{L;submoduleseries} (ii), we thus have $\bigcup_{C\in Q_n}B_n(C)=\{E_i^*\mathbf{1}+W_{n+1}: i\in S_n\}$, where $B_n(C^{(1)})\cap B_n(C^{(2)})=\varnothing$ if $C^{(1)}, C^{(2)}\in Q_n$ and  $C^{(1)}\neq C^{(2)}$. By Lemma \ref{L;submoduleseries} (ii) again, we thus can deduce that $W_n/W_{n+1}=\bigoplus_{C\in Q_n}\langle B_n(C)\rangle_\F=\bigoplus_{C\in Q_n}Irr_n(C)$. The proof of (ii) is now complete.

For (iii), we set $V_q=Irr_n(C_q)\oplus Irr_n(C_{q+1})\oplus\cdots\oplus Irr_n(C_m)$ for every $q\in [1,m]$. By (ii), observe that $W_n/W_{n+1}=V_1$. By the Correspondence Theorem for Modules, there exists a $T$-submodule series $W_{n+1}=U_m\subset U_{m-1}\subset \cdots \subset U_1\subset U_0=W_n$ of $W_n$ such that $U_{q-1}/U_m=V_q$ for every $q\in [1,m]$. In particular, $U_{m-1}/U_m\cong Irr_n(C_m)$ as $T$-modules. For every $r\in [1, m-1]$, by the Third Isomorphism Theorem, note that $U_{r-1}/U_r\cong (U_{r-1}/U_m)/(U_r/U_m)=V_r/V_{r+1}\cong Irr_n(C_r)$ as $T$-modules. The proof of (iii) is now complete.
\end{proof}
\begin{rem}\label{R;remark1}
We have $|Q_0|=1$ and $Q_0=\{S_0\}$ by Lemmas \ref{L;primarymoduleproperty} (iii) and \ref{L;decomposition} (ii).
\end{rem}
\begin{lem}\label{L;filtration}
Let $\epsilon=\max\{m\in \mathbb{N}_0: \exists\ i\in [0,d],\ p^m\mid k_i\}$. Then there exists a $T$-submodule series $\{\mathbf{0}\}=W_{\epsilon+1}\subset W_\epsilon\subseteq W_{\epsilon-1}\subseteq\cdots\subseteq W_1\subset W_0$ of $W_0$, where, for every $n\in [0,\epsilon]$,
\begin{align*}
W_n/W_{n+1}=\begin{cases}\bigoplus_{C\in Q_n}Irr_n(C), & \text{if}\ Q_n\neq \varnothing,\\
\{\mathbf{0}+W_{n+1}\}, & \text{if}\ Q_n=\varnothing.
\end{cases}
\end{align*}
\end{lem}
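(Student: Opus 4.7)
The plan is to assemble the lemma directly from the results already established in the section, so the work reduces to (a) identifying the two strict inclusions at the two ends of the chain, (b) verifying the vanishing $W_{\epsilon+1}=\{\mathbf{0}\}$, and (c) quoting Lemma \ref{L;decomposition} for the shape of each quotient.

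First I would observe that the chain of inclusions
$W_{\epsilon+1}\subseteq W_\epsilon\subseteq W_{\epsilon-1}\subseteq\cdots\subseteq W_1\subseteq W_0$
is immediate from Definition \ref{D;primarymodule}, which records that $W_{m+1}$ is a $T$-submodule of $W_m$ for every $m\in \mathbb{N}_0$. It therefore only remains to handle the two strict inclusions and to identify the quotients.

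For the strict inclusion $W_1\subset W_0$, I would simply cite Definition \ref{D;primarymodule}. For the vanishing $W_{\epsilon+1}=\{\mathbf{0}\}$, I would use the definition of $\epsilon$: for any integer $m>\epsilon$, no $i\in [0,d]$ satisfies $p^m\mid k_i$, so by the definition of $S_m$ we get $S_m=\varnothing$ for every $m>\epsilon$. Lemma \ref{L;submoduleseries} (i) (applied with $n=\epsilon$) then forces $W_m=\{\mathbf{0}\}$ for every $m>\epsilon$, in particular $W_{\epsilon+1}=\{\mathbf{0}\}$. For the strict inclusion $W_{\epsilon+1}\subset W_\epsilon$, I would pick $j\in [0,d]$ with $p^\epsilon\mid k_j$ (such $j$ exists by the definition of $\epsilon$); maximality of $\epsilon$ gives $p^{\epsilon+1}\nmid k_j$, so $j\in S_\epsilon$, and then Lemmas \ref{L;primemodule} (i) and \ref{L;submoduleseries} (ii) show $E_j^*\mathbf{1}\in W_\epsilon\setminus W_{\epsilon+1}$.

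For the description of the quotients $W_n/W_{n+1}$ with $n\in [0,\epsilon]$, I would apply Lemma \ref{L;decomposition}: part (ii) gives $W_n/W_{n+1}=\bigoplus_{C\in Q_n}Irr_n(C)$ when $Q_n\neq \varnothing$, while part (i) gives $W_n/W_{n+1}=\{\mathbf{0}+W_{n+1}\}$ when $Q_n=\varnothing$. Since neither this lemma nor any genuinely new identity is being used, there is essentially no obstacle; the only mildly delicate point is the bookkeeping to ensure that the two outer inclusions are strict and that the $\epsilon+1$-th term really equals the zero module, which as above is handled by invoking the definition of $\epsilon$ together with Lemma \ref{L;submoduleseries}.
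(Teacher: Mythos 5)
Your proposal is correct and follows essentially the same route as the paper: both derive $S_\epsilon\neq\varnothing=S_q$ for $q>\epsilon$ from the definition of $\epsilon$, obtain $\{\mathbf{0}\}=W_{\epsilon+1}\subset W_\epsilon$ from Lemma \ref{L;submoduleseries} (i) and (ii), take the chain of inclusions and $W_1\subset W_0$ from Definition \ref{D;primarymodule}, and read off the quotients from Lemma \ref{L;decomposition} (i) and (ii). No gaps.
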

\begin{proof}
By the definitions of $\epsilon$ and $S_\epsilon$, observe that $S_\epsilon\neq \varnothing=S_q$ for every $\epsilon<q\in \mathbb{N}$. So $W_{\epsilon+1}=\{\mathbf{0}\}\subset W_\epsilon$ by Lemma \ref{L;submoduleseries} (i) and (ii). So the desired $T$-module series follows from Definition \ref{D;primarymodule}. The desired equality is from Lemma \ref{L;decomposition} (i) and (ii).
\end{proof}
We are now ready to deduce the main result of this section.
\begin{thm}\label{T;primarymodule}
Let $\epsilon=\max\{m\in \mathbb{N}_0: \exists\ i\in [0,d],\ p^m\mid k_i\}$. Let $Q$ denote the set $\{n\in [0,\epsilon]: Q_n\neq \varnothing\}$.
\begin{enumerate}[(i)]
\item [\em (i)] $\bigcup_{q\in Q}\bigcup_{C\in Q_q}\{Irr_q(C)\}$ is the set of all composition factors of $W_0$ with respect to a composition series of $W_0$. Furthermore, $\bigcup_{q\in Q}\bigcup_{C\in Q_q}\{Irr_q(C)\}$ is a complete set of representatives of isomorphic classes of all composition factors of $W_0$.
\item[\em (ii)] $\sum_{q\in Q}|Q_q|$ equals the composition length of $W_0$. Furthermore, $\sum_{q\in Q}|Q_q|$ equals the number of isomorphic classes of all composition factors of $W_0$.
\end{enumerate}
\end{thm}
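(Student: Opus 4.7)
The plan is to combine Lemma \ref{L;filtration} and Lemma \ref{L;decomposition} (iii) to build an explicit composition series of $W_0$, then invoke the Jordan--H\"older Theorem together with Lemma \ref{L;nonisomorphism} to extract the conclusions. Lemma \ref{L;filtration} already hands us a $T$-submodule series $\{\mathbf{0}\}=W_{\epsilon+1}\subset W_\epsilon\subseteq W_{\epsilon-1}\subseteq\cdots\subseteq W_1\subset W_0$. After deleting repetitions, the indices that survive are exactly those $n\in [0,\epsilon]$ with $Q_n\neq\varnothing$, that is, the elements of $Q$. (Note $0\in Q$ by Remark \ref{R;remark1}, and $\epsilon\in Q$ by the choice of $\epsilon$, so $Q$ is nonempty.)

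For each $n\in Q$, Lemma \ref{L;decomposition} (iii) refines the single step $W_{n+1}\subset W_n$ into a $T$-submodule chain $W_{n+1}=U_{|Q_n|}\subset U_{|Q_n|-1}\subset\cdots\subset U_1\subset U_0=W_n$ whose successive quotients are, in some order, precisely the irreducible $T$-modules $Irr_n(C)$ with $C\in Q_n$. Splicing these refinements into the chain from Lemma \ref{L;filtration} (and reading the indices $n\in Q$ in decreasing order so that the chain is strictly descending from $W_0$ down to $\{\mathbf{0}\}$) yields a strict $T$-submodule series of $W_0$ whose consecutive quotients are irreducible. Hence this is a genuine composition series of $W_0$, and the multiset of its composition factors is exactly $\{Irr_q(C):q\in Q,\ C\in Q_q\}$.

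By the Jordan--H\"older Theorem, every composition series of $W_0$ has the same multiset of composition factors up to isomorphism, which proves the first assertion of (i) and the first equality in (ii): the composition length equals the number of entries counted with multiplicity, namely $\sum_{q\in Q}|Q_q|$. For the ``complete set of representatives'' and ``number of isomorphic classes'' statements, I would invoke Lemma \ref{L;nonisomorphism}: distinct pairs $(q,C)$ with $q\in Q$ and $C\in Q_q$ give pairwise non-isomorphic irreducible $T$-modules $Irr_q(C)$. Consequently the listed composition factors already exhaust the isomorphism classes without repetition, and the cardinality $\sum_{q\in Q}|Q_q|$ simultaneously counts composition factors (with multiplicity) and their isomorphism classes.

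There is no real obstacle: essentially all the work has been done in the preceding lemmas. The only point that needs mild care is the bookkeeping when stitching the refined chains together, since Lemma \ref{L;filtration} allows some consecutive $W_n=W_{n+1}$ (exactly when $Q_n=\varnothing$), so one should restrict the outer filtration to the indices in $Q$ before plugging in the refinements of Lemma \ref{L;decomposition} (iii); after that, the Jordan--H\"older argument together with the non-isomorphism statement of Lemma \ref{L;nonisomorphism} closes both (i) and (ii).
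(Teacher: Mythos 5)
Your proposal is correct and follows essentially the same route as the paper: both build a composition series by splicing the refinements from Lemma \ref{L;decomposition} (iii) into the filtration of Lemma \ref{L;filtration}, and then apply the Jordan--H\"older Theorem together with Lemma \ref{L;nonisomorphism} to get both the representative-set statement and the count $\sum_{q\in Q}|Q_q|$. Your extra remark about skipping the indices with $Q_n=\varnothing$ is exactly the bookkeeping the paper leaves implicit.
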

\begin{proof}
For (i), by Lemmas \ref{L;filtration} and \ref{L;decomposition} (iii), there exists a composition series of $W_0$ whose successive subquotients are precisely the elements of $\bigcup_{q\in Q}\bigcup_{C\in Q_q}\{Irr_q(C)\}$. So the first statement of (i) holds. By Lemma \ref{L;nonisomorphism}, any two distinct elements of $\bigcup_{q\in Q}\bigcup_{C\in Q_q}\{Irr_q(C)\}$ are not isomorphic to each other as $T$-modules. So the second statement of (i) is from the first one and the Jordan-H\"{o}lder Theorem. (i) is shown.

For (ii), by the first statement of (i) and the definition of the composition length of $W_0$,  the composition length of $W_0$ is $|\bigcup_{q\in Q}\bigcup_{C\in Q_q}\{Irr_q(C)\}|$. By Lemma \ref{L;nonisomorphism},
\begin{align}\label{Eq;preliminary8}
|\bigcup_{q\in Q}\bigcup_{C\in Q_q}\{Irr_q(C)\}|=\sum_{q\in Q}\sum_{C\in Q_q}1=\sum_{q\in Q}|Q_q|.
\end{align}
The first statement of (ii) thus follows. The second statement of (ii) comes from the second statement of (i) and \eqref{Eq;preliminary8}. (ii) is proved.
\end{proof}
\begin{rem}\label{R;remark2}
Let $A$ be a finite-dimensional associative unital $\F$-algebra. Call an $A$-module a multiplicity free $A$-module if its composition length equals the number of isomorphic classes of all its composition factors. By Lemma \ref{L;primarymoduleproperty} (ii) and Theorem \ref{T;primarymodule} (ii), notice that $W_0$ is an indecomposable multiplicity free $T$-module.
\end{rem}
\begin{eg}\label{E;theorem1}
Let us illustrate Theorem \ref{T;primarymodule} by an example. Assume that $p=2$ and $S$ is the scheme in Example \ref{E;notation1}. Notice that $k_0=k_1=1$, $k_2=2$, $k_3=k_4=4$, $S_0=\{0,1\}$, $S_1=\{2\}$, $S_2=\{3,4\}$, $\epsilon=2$, and $Q=\{0,1,2\}$. By Remark \ref{R;remark1}, note that $Q_0=\{\{0,1\}\}$ and $Q_1=\{\{2\}\}$. Let $i,j\in [0,4]$. For every $\ell\in [2,4]$, $2\nmid p_{\ell j}^i$ if and only if $i=\ell$ and $j\in [0,1]$, which implies that $Q_2=\{\{3\},\{4\}\}$ and $\langle \{E_\ell^*\mathbf{1}\}\rangle_\F$ is a $T$-submodule of $W_0$ for every $\ell\in [2,4]$. According to Theorem \ref{T;primarymodule} (i), notice that $\{Irr_0(\{0,1\}), Irr_1(\{2\}), Irr_2(\{3\}), Irr_2(\{4\})\}$ is a complete set of representatives of isomorphic classes of all composition factors of $W_0$. By Theorem \ref{T;primarymodule} (ii), note that the composition length of $W_0$ is four.
\end{eg}
The following corollary is an application of Lemmas \ref{L;decomposition} and \ref{L;filtration}.
\begin{cor}\label{C;uniserial}
The following statements are equivalent:
\begin{enumerate}[(i)]
\item [\em (i)] For any $T$-submodules $U$ and $V$ of $W_0$, we have either $U\subseteq V$ or $V\subseteq U$;
\item [\em (ii)] For every $n\in \mathbb{N}_0$, we have either $W_{n+1}=W_n$ or $W_{n+1}$ is the unique maximal $T$-submodule of $W_n$.
\end{enumerate}
\end{cor}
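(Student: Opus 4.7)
The plan is to prove the two implications separately, in each direction making essential use of the quotient decomposition $W_n/W_{n+1}=\bigoplus_{C\in Q_n}Irr_n(C)$ from Lemma \ref{L;decomposition}(ii) together with Lemma \ref{L;primarymoduleproperty}(i).

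For (i) $\Rightarrow$ (ii), I would fix $n\in\mathbb{N}_0$ and, assuming $W_{n+1}\neq W_n$, show that $W_{n+1}$ is the unique maximal $T$-submodule of $W_n$. Under (i) the $T$-submodules of $W_n$ are totally ordered by inclusion, so by the Correspondence Theorem for Modules the same holds for the $T$-submodules of $W_n/W_{n+1}$. By Lemma \ref{L;decomposition}(ii) this quotient is semisimple, and any nonzero semisimple module whose submodule lattice is a chain must be irreducible (a direct sum with two or more irreducible summands always contains a pair of incomparable simple submodules, as they intersect in zero). Hence $|Q_n|=1$ and $W_{n+1}$ is a maximal $T$-submodule of $W_n$. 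For uniqueness, any maximal $T$-submodule $M$ of $W_n$ is comparable to $W_{n+1}$ by (i); the possibility $M\subsetneq W_{n+1}$ contradicts the maximality of $M$, while $W_{n+1}\subsetneq M\subsetneq W_n$ would exhibit a nonzero proper $T$-submodule $M/W_{n+1}$ of the irreducible $W_n/W_{n+1}$, so only $M=W_{n+1}$ survives.

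For (ii) $\Rightarrow$ (i), I would prove the stronger statement that every $T$-submodule $U$ of $W_0$ already equals some $W_m$, which immediately yields (i) since the $W_n$ form a descending chain. Given $U$, set $m=\max\{n\in[0,\epsilon+1]:U\subseteq W_n\}$, with $\epsilon$ as in Lemma \ref{L;filtration}. If $m=\epsilon+1$, then $U=\{\mathbf{0}\}=W_{\epsilon+1}$. Otherwise $U\not\subseteq W_{m+1}$; by (ii) applied at index $m$, the alternative $W_{m+1}=W_m$ is ruled out (it would give $U\subseteq W_m=W_{m+1}$), so $W_{m+1}$ is the unique maximal $T$-submodule of $W_m$. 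If $U$ were a proper $T$-submodule of the finite-dimensional $W_m$, it would sit inside some maximal $T$-submodule of $W_m$, which by uniqueness would be $W_{m+1}$, again contradicting $U\not\subseteq W_{m+1}$. Hence $U=W_m$.

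The only subtle point I foresee is in the direction (ii) $\Rightarrow$ (i): one must choose $m$ \emph{as large as possible} with $U\subseteq W_m$, so that the failure $U\not\subseteq W_{m+1}$ combined with the unique maximality of $W_{m+1}$ in $W_m$ forces the equality $U=W_m$; a smaller choice would only yield $U+W_{m+1}=W_m$, which is insufficient to pin $U$ down. Everything else is a formal consequence of Lemma \ref{L;decomposition}(ii) and the finite-dimensionality of $W_0$.
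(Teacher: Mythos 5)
Your proof is correct and follows essentially the same route as the paper: for (i)$\Rightarrow$(ii) both arguments use Lemma \ref{L;decomposition} and the Correspondence Theorem to produce incomparable submodules when $W_n/W_{n+1}$ is nonzero but reducible, and for (ii)$\Rightarrow$(i) both show that every $T$-submodule of $W_0$ equals some $W_m$ via the filtration of Lemma \ref{L;filtration} and the unique maximality of $W_{m+1}$ in $W_m$. The subtlety you flag (choosing $m$ maximal with $U\subseteq W_m$) is exactly the point the paper's proof also relies on.
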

\begin{proof}
We prove (ii) by (i). Let $q\in \mathbb{N}_0$. Assume further that $W_q\neq W_{q+1}$. Therefore $W_{q+1}\subset W_q$ by Definition \ref{D;primarymodule}. Suppose that $W_{q+1}$ is not a maximal $T$-submodule of $W_q$. The nonzero $T$-module $W_q/W_{q+1}$ is not an irreducible $T$-module. By combining Lemma \ref{L;decomposition} (i), (ii), and the Correspondence Theorem for Modules, notice that $W_q$ has at least two distinct maximal $T$-submodules. Let $M$ and $N$ be distinct maximal $T$-submodules of $W_q$. By (i), notice that either $M\subseteq N\subset W_0$ or $N\subseteq M\subset W_0$. We thus have a contradiction since $M$ and $N$ are distinct maximal $T$-submodules of $W_q$. So $W_{q+1}$ is a maximal $T$-submodule of $W_q$. By (i), we can also get a similar contradiction if we suppose that the maximal $T$-submodule $W_{q+1}$ is not the unique maximal $T$-submodule of $W_q$. As $q$ is chosen from $\mathbb{N}_0$ arbitrarily, (ii) thus follows.

We prove (i) by (ii). Let $\epsilon=\max\{m\in \mathbb{N}_0: \exists\ i\in [0,d],\ p^m\mid k_i\}$. Let $W$ denote a nonzero $T$-submodule of $W_0$. We have $\{\mathbf{0}\}=W_{\epsilon+1}\subset W_\epsilon\subseteq W_{\epsilon-1}\subseteq\cdots\subseteq W_1\subset W_0$ by Lemma \ref{L;filtration}. So there exists $r\in [0, \epsilon]$ such that $W\subseteq W_r$ and $W\nsubseteq W_{r+1}$. Hence $W_r\neq W_{r+1}$, which implies that $W_{r+1}$ is the unique maximal $T$-submodule of $W_r$ by (ii). Suppose that $W\neq W_r$. Then $W\subseteq W_{r+1}$ since $W\subset W_r$ and $W_{r+1}$ is the unique maximal $T$-submodule of $W_r$. We thus have a contradiction since $W\nsubseteq W_{r+1}$. Therefore $W=W_r$. As $W$ is an arbitrarily chosen nonzero $T$-submodule of $W_0$, every $T$-submodule of $W_0$ is one of the $T$-modules $W_0, W_1, \ldots, W_{\epsilon+1}$. (i) thus follows from Lemma \ref{L;filtration}.
\end{proof}
We end this section with the following remarks.
\begin{rem}\label{R;remark3}
Let $A$ be a finite-dimensional associative unital $\F$-algebra. Call an $A$-module a uniserial $A$-module if, for any $A$-submodules $U$ and $V$ of this $A$-module, we have either $U\subseteq V$ or $V\subseteq U$. Corollary \ref{C;uniserial} describes a criterion to determine whether $W_0$ is a uniserial $T$-module.
\end{rem}
\begin{rem}\label{R;remark4}
Observe that $W_0$ is a uniserial $T$-module if $d=1$. In general, $W_0$ may not be a uniserial $T$-module. In Example \ref{E;theorem1}, observe that $W_1$ is a direct sum of its $T$-submodules $\langle\{E_2^*\mathbf{1}\}\rangle_\F$, $\langle\{E_3^*\mathbf{1}\}\rangle_\F$, $\langle\{E_4^*\mathbf{1}\}\rangle_\F$. Hence $W_0$ of this case is not a uniserial $T$-module by Lemma \ref{L;primemodule} (i) and the definition of a uniserial $T$-module.
\end{rem}
\section{Some characterizations of $p'$-valenced schemes}
In this section, we characterize the $p'$-valenced schemes by some properties of their modular Terwilliger algebras. We recall the notations in Definition \ref{D;primarymodule} and Notation \ref{N;notation3}. By \eqref{Eq;preliminary2}, notice that $E_i^*JE_j^*\in T$ for any $i,j\in [0,d]$. We first present a lemma.
\begin{lem}\label{L;primaryideal}
The following statements hold:
\begin{enumerate}[(i)]
\item [\em (i)] $\{E_i^*JE_j^*: i,j\in [0,d]\}$ is an $\F$-linearly independent set of cardinality $(d+1)^2$.
\item [\em (ii)] $\langle\{E_i^*JE_j^*: i,j\in [0,d]\}\rangle_\F$ is a two-sided ideal of $T$.
\item [\em (iii)] $\langle\{E_i^*JE_j^*: i,j\in [0,d],\ p\mid k_ik_j\}\rangle_\F$ is a two-sided ideal of $T$.
\item [\em (iv)] Assume that $\ell\in [0,d]$. Then $\langle\{E_i^*JE_\ell^*: i\in [0,d]\}\rangle_\F$ is a $T$-module under the left multiplication action of $T$.
\end{enumerate}
\end{lem}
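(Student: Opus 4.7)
The plan is to prove the four parts in order, leveraging a single explicit computation of how generators of $T$ act on matrices of the form $E_i^*JE_j^*$ and then specializing.

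\textbf{Part (i).} Given a relation $\sum_{i,j\in[0,d]}c_{ij}E_i^*JE_j^*=O$ with $c_{ij}\in\F$, I will multiply on the left by $E_a^*$ and on the right by $E_b^*$; by \eqref{Eq;preliminary3} only the $(a,b)$-term survives, leaving $c_{ab}E_a^*JE_b^*=O$, and \eqref{Eq;preliminary4} then forces $c_{ab}=0$. This gives linear independence, and the cardinality count follows because the listed matrices are then pairwise distinct.

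\textbf{Core computation underlying (ii)--(iv).} By Lemma~\ref{L;generalresults}~(iii) and \eqref{Eq;preliminary6} it suffices to multiply $E_i^*JE_j^*$ by an arbitrary generator $E_a^*A_bE_c^*$ on either side. The left product is immediate from \eqref{Eq;preliminary3} and Lemma~\ref{L;generalresults}~(i):
\[
E_a^*A_bE_c^*\cdot E_i^*JE_j^* = \delta_{ci}\,\overline{p_{ib'}^a}\,E_a^*JE_j^*.
\]
For the right product, I will transpose using \eqref{Eq;preliminary1} together with $J^t=J$, apply Lemma~\ref{L;generalresults}~(i) to $E_c^*A_{b'}E_j^*J$, and transpose back to obtain
\[
E_i^*JE_j^*\cdot E_a^*A_bE_c^* = \delta_{ja}\,\overline{p_{jb}^c}\,E_i^*JE_c^*.
\]
Part~(ii) is now immediate since both right-hand sides lie in the given $\F$-span, and part~(iv) follows from the left product formula alone since the index $j$ on the right is preserved.

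\textbf{Part (iii).} I will reuse the two product formulas above. Fix $i,j$ with $p\mid k_ik_j$; I must show every nonzero output lies in the sub-ideal. For the left action, if $\overline{p_{ib'}^a}\neq 0$ I need $p\mid k_ak_j$. The case $p\mid k_j$ is free, so assume $p\mid k_i$ and $p\nmid k_j$; Lemma~\ref{L;trigular} supplies $k_ap_{ib'}^a=k_ip_{ab}^i$, and $p\nmid p_{ib'}^a$ together with $p\mid k_i$ forces $p\mid k_a$, as required. A symmetric argument applied to $k_cp_{jb}^c=k_jp_{cb'}^j$ handles the right action. The only real obstacle is this bookkeeping step, which reduces to a single application of the triangle identity in each direction; once the explicit formulas from the core computation are in hand, every other claim in the lemma unwinds directly from \eqref{Eq;preliminary3}, \eqref{Eq;preliminary4}, and Lemma~\ref{L;generalresults}~(i).
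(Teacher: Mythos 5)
Your proposal is correct and follows essentially the same route as the paper: part (i) by sandwiching the relation between $E_a^*$ and $E_b^*$ and invoking \eqref{Eq;preliminary3} and \eqref{Eq;preliminary4}, parts (ii) and (iv) from the two product formulas (the right-hand one obtained by the same transpose trick via \eqref{Eq;preliminary1} and Lemma \ref{L;generalresults} (i)), and part (iii) by the same case split using Lemma \ref{L;trigular}. No gaps.
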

\begin{proof}
For (i), suppose that $\sum_{i=0}^d\sum_{j=0}^dc_{ij}E_i^*JE_j^*=O$, where $\bigcup_{i=0}^d\bigcup_{j=0}^d\{c_{ij}\}\subseteq\F$ and $(\bigcup_{i=0}^d\bigcup_{j=0}^d\{c_{ij}\})\cap (\F\setminus\{0\})\neq \varnothing$. So there exist $g,h \in [0,d]$ such that $c_{gh}\neq 0$. Hence $c_{gh}E_g^*JE_h^*=E_g^*(\sum_{i=0}^d\sum_{j=0}^dc_{ij}E_i^*JE_j^*)E_h^*=E_g^*OE_h^*=O$ by
\eqref{Eq;preliminary3}. By \eqref{Eq;preliminary4}, we thus deduce that $c_{gh}=0$, which contradicts the inequality $c_{gh}\neq 0$. Therefore $\{E_i^*JE_j^*: i,j\in [0,d]\}$ is an $\F$-linearly independent set. According to \eqref{Eq;preliminary3} and \eqref{Eq;preliminary4} again, we also note that $|\{E_i^*JE_j^*: i,j\in [0,d]\}|=(d+1)^2$. (i) thus follows.

Let $a,b,c\in [0,d]$. For (ii), according to \eqref{Eq;preliminary1}, \eqref{Eq;preliminary3}, and Lemma \ref{L;generalresults} (i), observe that $E_u^*JE_v^*E_a^*A_bE_c^*=(E_c^*A_{b'}E_a^*E_v^*JE_u^*)^t=(\delta_{av}\overline{p_{v b}^c}E_c^*JE_u^*)^t=\delta_{av}\overline{p_{vb}^c}E_u^*JE_c^*$ and $E_a^*A_bE_c^*E_u^*JE_v^*=\delta_{cu}\overline{p_{ub'}^a}E_a^*JE_v^*$ for any $u, v\in [0,d]$. In particular, $E_a^*A_bE_c^*E_u^*JE_v^*$ and $E_u^*JE_v^*E_a^*A_bE_c^*$ are contained in $\langle\{E_i^*JE_j^*: i,j\in [0,d]\}\rangle_\F$ for any $u,v \in [0,d]$. Since $\langle\{E_i^*JE_j^*: i,j\in [0,d]\}\rangle_\F$ is an $\F$-linear space and $a, b,c$ are chosen from $[0,d]$ arbitrarily, (ii) thus follows from Lemma \ref{L;generalresults} (iii) and \eqref{Eq;preliminary6}.

For (iii), notice that (iii) is trivial if $\langle\{E_i^*JE_j^*: i,j\in [0,d],\ p\mid k_ik_j\}\rangle_\F=\{O\}$. We thus assume further that $\langle\{E_i^*JE_j^*: i,j\in [0,d],\ p\mid k_ik_j\}\rangle_\F\neq\{O\}$. Let $r,s\in [0,d]$ and $p\mid k_r k_s$. Observe that $p\mid k_ck_r$ if $p\nmid p_{sb}^c$. Otherwise, suppose that $p\nmid p_{sb}^c$ and $p\nmid k_ck_r$. As $p\mid k_r k_s$ and $p\nmid k_ck_r$, notice that $p\nmid k_c$ and $p\mid k_s$. As $k_cp_{sb}^c=k_sp_{cb'}^s$ by Lemma \ref{L;trigular}, we thus have $p\mid p_{sb}^c$, which contradicts the assumption $p\nmid p_{sb}^c$. So we have $E_c^*A_{b'}E_a^*E_s^*JE_r^*=\delta_{as}\overline{p_{sb}^c}E_c^*JE_r^*\in \langle\{E_i^*JE_j^*: i,j\in [0,d],\ p\mid k_ik_j\}\rangle_\F$ by \eqref{Eq;preliminary3} and Lemma \ref{L;generalresults} (i). So $E_r^*JE_s^*E_a^*A_bE_c^*\in \langle\{E_i^*JE_j^*: i,j\in [0,d],\ p\mid k_ik_j\}\rangle_\F$ as $E_r^*JE_s^*E_a^*A_bE_c^*=(E_c^*A_{b'}E_a^*E_s^*JE_r^*)^t=\delta_{as}\overline{p_{sb}^c}E_r^*JE_c^*$ by \eqref{Eq;preliminary1}. Notice that $p\mid k_ak_s$ if $p\nmid p_{rb'}^a$. Otherwise, suppose that $p\nmid p_{rb'}^a$ and $p\nmid k_ak_s$. Since $p\mid k_rk_s$ and $p\nmid k_ak_s$, note that $p\nmid k_a$ and $p\mid k_r$. Since $k_ap_{rb'}^a=k_rp_{ab}^r$ by Lemma \ref{L;trigular}, we thus have $p\mid p_{rb'}^a$, which contradicts the assumption $p\nmid p_{rb'}^a$. By \eqref{Eq;preliminary3} and Lemma \ref{L;generalresults} (i), we thus have $E_a^*A_bE_c^*E_r^*JE_s^*=\delta_{cr}\overline{p_{rb'}^a}E_a^*JE_s^*\in \langle\{E_i^*JE_j^*: i,j\in [0,d],\ p\mid k_ik_j\}\rangle_\F$. We thus deduce that $E_a^*A_bE_c^*E_u^*JE_v^*, E_u^*JE_v^*E_a^*A_bE_c^*\in \langle\{E_i^*JE_j^*: i,j\in [0,d],\ p\mid k_ik_j\}\rangle_\F$ for any $u,v\in [0,d]$ and $p\mid k_uk_v$. Since $\langle\{E_i^*JE_j^*: i,j\in [0,d],\ p\mid k_ik_j\}\rangle_\F$ is an $\F$-linear space and $a,b,c$ are chosen from $[0,d]$ arbitrarily, (iii) thus follows from Lemma \ref{L;generalresults} (iii) and \eqref{Eq;preliminary6}.

For (iv), by \eqref{Eq;preliminary3} and Lemma \ref{L;generalresults} (i),  $E_a^*A_bE_c^*E_u^*JE_\ell^*=\delta_{cu}\overline{p_{ub'}^a}E_a^*JE_\ell^*$ for every $u\in [0,d]$. Since $\langle\{E_i^*JE_\ell^*: i\in [0,d]\}\rangle_\F$ is an $\F$-linear space and $a,b,c$ are chosen from $[0,d]$ arbitrarily, (iv) thus follows from Lemma \ref{L;generalresults} (iii) and \eqref{Eq;preliminary6}.
\end{proof}
The following notation will be heavily used in the following discussion.
\begin{nota}\label{N;notation4}
Denote the two-sided ideals of $T$ in Lemma \ref{L;primaryideal} (ii) and (iii) by $B_0$ and $B_1$, respectively. So $B_0$ is a $T$-module under the left multiplication action of $T$. Let $_{T}B_0$ denote this $T$-module. Notice that $T$ itself is also a $T$-module under the left multiplication action of $T$. Let $_{T}T$ denote this $T$-module. Let $\ell\in [0,d]$. Denote the $T$-module in Lemma \ref{L;primaryideal} (iv) by $M_\ell$. Observe that $B_1\subset B_0\subseteq T$, $M_\ell\subseteq{_{T}B_0}\subseteq {_{T}T}$, $\dim_\F B_0=\dim_\F{_{T}B_0}=(d+1)^2$, $\dim_\F M_{\ell}=d+1$, and $B_0$ is an $\F$-subalgebra of $T$.
\end{nota}
The following lemma summarizes some properties of the objects in Notation \ref{N;notation4}.
\begin{lem}\label{L;properties}
The following statements hold:
\begin{enumerate}[(i)]
\item [\em (i)] $B_1$ is the unique maximal two-sided ideal of the $\F$-subalgebra $B_0$ of $T$.
\item [\em (ii)] The matrix product of any three elements of $B_1$ is $O$.
\item [\em (iii)]$B_1$ is a nilpotent two-sided ideal of $T$. In particular, $B_1\subseteq \mathrm{Rad}(T)$.
\item [\em (iv)] Assume that $\ell\in [0,d]$. Then $M_\ell\cong W_0$ as $T$-modules.
\item [\em (v)] $_{T}B_0$ is isomorphic to a direct sum of $d+1$ copies of $W_0$ as $T$-modules.
\end{enumerate}
\end{lem}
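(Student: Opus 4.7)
The plan is to derive every part from the single multiplication rule
\[
(E_i^*JE_j^*)(E_k^*JE_l^*) \;=\; \delta_{jk}\,\overline{k_j}\,E_i^*JE_l^*\qquad (i,j,k,l\in[0,d]),
\]
which drops out of \eqref{Eq;preliminary3} and \eqref{Eq;preliminary5}. I would prove the parts in the order (ii), (iii), (i), (iv), (v). For (ii), applying the rule twice gives that a triple product of basis elements of $B_1$ equals $\delta_{jk}\delta_{lm}\overline{k_j}\,\overline{k_l}\,E_i^*JE_n^*$; for this to be nonzero one would need $j=k$, $l=m$, $p\nmid k_j$, and $p\nmid k_l$, contradicting $E_j^*JE_l^*\in B_1$ (which forces $p\mid k_jk_l$). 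Bilinearity then extends the vanishing to arbitrary triples. Part (iii) follows at once: $B_1^3=\{O\}$ makes $B_1$ a nilpotent two-sided ideal of $T$, which therefore lies inside $\mathrm{Rad}(T)$ since the Jacobson radical is the sum of all such ideals.

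For (i) I would first note that $B_1\subsetneq B_0$ because $E_0^*JE_0^*\notin B_1$ ($k_0=1$ gives $0\in S_0$). Then, given any two-sided ideal $M$ of $B_0$ with $M\not\subseteq B_1$, I pick $u=\sum c_{ij}E_i^*JE_j^*\in M$ with some $c_{i_0j_0}\neq 0$ for $i_0,j_0\in S_0$; sandwiching $u$ between $E_{i_0}^*JE_{i_0}^*$ and $E_{j_0}^*JE_{j_0}^*$ collapses via the multiplication rule to $c_{i_0j_0}\overline{k_{i_0}}\,\overline{k_{j_0}}\,E_{i_0}^*JE_{j_0}^*\in M$, with nonzero scalar because $i_0,j_0\in S_0$, so $E_{i_0}^*JE_{j_0}^*\in M$. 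The further triple product $(E_k^*JE_{i_0}^*)(E_{i_0}^*JE_{j_0}^*)(E_{j_0}^*JE_l^*)=\overline{k_{i_0}}\,\overline{k_{j_0}}\,E_k^*JE_l^*$ for arbitrary $k,l\in[0,d]$ then places every basis element of $B_0$ inside $M$, forcing $M=B_0$; hence every proper two-sided ideal of $B_0$ lies in $B_1$.

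For (iv) I would define $\phi\colon W_0\to M_\ell$ by $E_i^*\mathbf{1}\mapsto E_i^*JE_\ell^*$, extended $\F$-linearly; the two bases have common cardinality $d+1$ by Lemmas \ref{L;primemodule} (i) and \ref{L;primaryideal} (i), so $\phi$ is an $\F$-isomorphism, and $T$-equivariance only has to be verified on the spanning set $\{E_a^*A_bE_c^*\}$ of $T$ supplied by Lemma \ref{L;generalresults} (iii) and \eqref{Eq;preliminary6}, where Lemma \ref{L;generalresults} (i) together with \eqref{Eq;preliminary3} shows that both actions produce the same scalar $\delta_{ci}\overline{p_{ib'}^a}$. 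Then (v) follows by splitting the basis of $B_0$ by its second index: this gives $B_0=\bigoplus_{\ell=0}^dM_\ell$ as an $\F$-linear direct sum whose summands are $T$-submodules by Lemma \ref{L;primaryideal} (iv), and (iv) identifies each summand with $W_0$. The main obstacle I anticipate is the uniqueness step in (i), because $B_0$ need not be a unital subalgebra of $T$, and so I must avoid the usual Jacobson-radical argument for unital rings and reason directly through the sandwich calculation; the remaining parts are systematic unpackings of the multiplication rule.
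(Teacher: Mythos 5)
Your proposal is correct and follows essentially the same route as the paper: the same multiplication rule $(E_i^*JE_j^*)(E_k^*JE_l^*)=\delta_{jk}\overline{k_j}E_i^*JE_l^*$ drives (i)--(iii) via the identical sandwich computation (the paper phrases (i) as showing any maximal ideal lies in $B_1$, while you argue contrapositively that any ideal not in $B_1$ is all of $B_0$, which is the same calculation), and (iv)--(v) use the same map $E_i^*\mathbf{1}\mapsto E_i^*JE_\ell^*$ and the same decomposition of $B_0$ by second index.
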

\begin{proof}
For (i), by the definitions of $B_0$ and $B_1$, $B_1$ is a proper two-sided ideal of the $\F$-subalgebra $B_0$ of $T$. Let $M$ be a maximal two-sided ideal of the $\F$-subalgebra $B_0$ of $T$. According to the definition of $B_0$, pick $\sum_{i=0}^d\sum_{j=0}^dc_{ij}E_i^*JE_j^*\in M$, where $c_{ij}\in \F$ for any $i,j\in [0,d]$. For any $i,j\in [0,d]$, we claim that $c_{ij}=0$ if $p\nmid k_ik_j$. Suppose that there exist $a,b\in [0,d]$ such that $p\nmid k_ak_b$ and $c_{ab}\neq 0$. Let $c,e\in [0,d]$. Observe that $E_c^*JE_a^*, E_b^*JE_e^*\in B_0$ by the definition of $B_0$. Since $M$ is a two-sided ideal of the $\F$-subalgebra $B_0$ of $T$, $c_{ab}\overline{k_ak_b}E_c^*JE_e^*=E_c^*JE_a^*(\sum_{i=0}^d\sum_{j=0}^dc_{ij}E_i^*JE_j^*)E_b^*JE_e^*\in M$ by \eqref{Eq;preliminary3} and \eqref{Eq;preliminary5}. Hence $E_c^*JE_e^*\in M$ as $c_{ab}\neq 0$ and $p\nmid k_ak_b$. Since $c,e$ are chosen from $[0,d]$ arbitrarily and $M$ is a maximal two-sided ideal of the $\F$-subalgebra $B_0$ of $T$, we thus have $B_0\subseteq M\subset B_0$, which is impossible. Hence the desired claim follows. Since $\sum_{i=0}^d\sum_{j=0}^dc_{ij}E_i^*JE_j^*$ is chosen from $M$ arbitrarily, by the proven claim and the definition of $B_1$, notice that $M\subseteq B_1\subset B_0$, which implies that $M=B_1$ as $M$ is a maximal two-sided ideal of the $\F$-subalgebra $B_0$ of $T$. As $M$ is an arbitrarily chosen maximal two-sided ideal of the $\F$-subalgebra $B_0$ of $T$, $B_1$ is the unique maximal two-sided ideal of the $\F$-subalgebra $B_0$ of $T$. (i) is proved.

For (ii), notice that (ii) is trivial if $B_1=\{O\}$. We assume further that $B_1\neq\{O\}$. Let $g,h, r,s ,u, v\in [0,d]$, where $p\mid k_gk_h$, $p\mid k_rk_s$, and $p\mid k_uk_v$. We thus deduce that
\begin{align}\label{Eq;preliminary9}
E_g^*JE_h^*E_r^*JE_s^*E_u^*JE_v^*=\delta_{hr}\delta_{su}E_g^*JE_r^*JE_s^*JE_v^*=\delta_{hr}\delta_{su}\overline{k_rk_s}E_g^*JE_v^*=O
\end{align}
by \eqref{Eq;preliminary3} and \eqref{Eq;preliminary5}. As $g,h,r,s,u,v$ are chosen from $[0,d]$ arbitrarily, (ii) thus follows from \eqref{Eq;preliminary9} and the definition of $B_1$.

For (iii), as $B_1$ is a two-sided ideal of $T$, (iii) is shown by (ii).

For (iv), by the definitions of $W_0$ and $M_\ell$, let $\phi$ be the $\F$-linear homomorphism from $W_0$ to $M_\ell$ that sends every $E_i^*\mathbf{1}$ to $E_i^*JE_\ell^*$. By Lemmas \ref{L;primemodule} (i) and \ref{L;primaryideal} (i), $\phi$ is an $\F$-linear isomorphism. Observe that $\phi$ is also a $T$-isomorphism by combining the definition of $W_0$, the definition of $M_\ell$, Lemma \ref{L;generalresults} (i), (iii), and \eqref{Eq;preliminary6}. We are done.

For (v), by the definition of $_{T}B_0$ and Lemma \ref{L;primaryideal} (i), notice that $_{T}B_0=\bigoplus_{i=0}^dM_i$. (v) thus follows from (iv).
\end{proof}
\begin{rem}\label{R;remark5}
In general, the matrix product of any two elements of $B_1$ may not be $O$. Assume that $S$ is not a $p'$-valenced scheme. Then there exists $R_i\in S$ such that $p\mid k_i$. Notice that $O\neq E_i^*JE_i^* =(E_i^*JE_0^*+E_0^*JE_i^*)^2\in B_1$ by \eqref{Eq;preliminary3}, \eqref{Eq;preliminary4},  \eqref{Eq;preliminary5}.
\end{rem}
\begin{rem}\label{R;remark6}
The containment in Lemma \ref{L;properties} (iii) may be strict (see \cite[5.1]{Han1}). The containment in Lemma \ref{L;properties} (iii) may become equality (see \cite[Theorems B and C]{Jiang}).
\end{rem}
The following lemma contains some characterizations of the $p'$-valenced schemes.
\begin{lem}\label{L;characterization1}
The following statements are equivalent:
\begin{enumerate}[(i)]
\item [\em (i)] $S$ is a $p'$-valenced scheme;
\item [\em (ii)] The $\F$-subalgebra $B_0$ of $T$ is unital. Its identity element is central in $T$;
\item [\em (iii)] There exists a two-sided ideal $D$ of $T$ such that $T$ is a direct sum of $B_0$ and $D$;
\item [\em (iv)] The $\F$-subalgebra $B_0$ of $T$ is isomorphic to a full matrix algebra over a division $\F$-algebra as $\F$-algebras.
\end{enumerate}
\end{lem}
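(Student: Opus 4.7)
The plan is to close the cycle (i) $\Rightarrow$ (ii) $\Rightarrow$ (iii) $\Rightarrow$ (i) and separately prove (i) $\Leftrightarrow$ (iv), all based on explicit computations with the spanning set $\{E_i^*JE_j^*:i,j\in[0,d]\}$ of $B_0$ furnished by Lemma~\ref{L;primaryideal}(i). For (i) $\Rightarrow$ (ii), assuming $p\nmid k_i$ for every $i$, I would put $e=\sum_{i=0}^d\overline{k_i}^{-1}E_i^*JE_i^*\in B_0$ and verify via \eqref{Eq;preliminary3} and \eqref{Eq;preliminary5} that $e$ is a two-sided identity of $B_0$. For the centrality of $e$ in $T$, Lemma~\ref{L;generalresults}(iii) and \eqref{Eq;preliminary6} reduce the task to commuting $e$ with each generator $E_a^*A_bE_c^*$. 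Using \eqref{Eq;preliminary3} with Lemma~\ref{L;generalresults}(i) applied directly on the right, and via the transpose in \eqref{Eq;preliminary1} on the left, I would arrive at
\[
e\cdot E_a^*A_bE_c^*=\overline{k_a}^{-1}\overline{p_{ab}^c}\,E_a^*JE_c^*,\qquad E_a^*A_bE_c^*\cdot e=\overline{k_c}^{-1}\overline{p_{cb'}^a}\,E_a^*JE_c^*,
\]
and the two sides will agree by the triangular identity $k_cp_{ab}^c=k_ap_{cb'}^a$ of Lemma~\ref{L;trigular}.

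For (ii) $\Rightarrow$ (iii), once $e$ is a central idempotent of $T$, the standard splitting $T=eT\oplus(I-e)T$ yields a decomposition into two-sided ideals. Since $e\in B_0$ and $B_0$ is a two-sided ideal of $T$ by Lemma~\ref{L;primaryideal}(ii), one has $eT\subseteq B_0$, while $eb=b$ for every $b\in B_0$ gives the reverse inclusion; taking $D=(I-e)T$ completes the step. For (iii) $\Rightarrow$ (i), given a decomposition $T=B_0\oplus D$ with $D$ a two-sided ideal, writing $I=e+f$ with $e\in B_0$ and $f\in D$ forces $ef,fe\in B_0\cap D=\{O\}$, so $e$ is a two-sided identity for $B_0$. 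If some $k_\ell$ were divisible by $p$, then expanding $e=\sum_{i,j}c_{ij}E_i^*JE_j^*$ and applying \eqref{Eq;preliminary3} and \eqref{Eq;preliminary5} to $E_\ell^*JE_\ell^*\cdot e$ would give
\[
E_\ell^*JE_\ell^*=E_\ell^*JE_\ell^*\cdot e=\overline{k_\ell}\sum_jc_{\ell j}E_\ell^*JE_j^*=O,
\]
contradicting \eqref{Eq;preliminary4}.

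For (i) $\Rightarrow$ (iv), setting $f_{ij}=\overline{k_j}^{-1}E_i^*JE_j^*$ for $i,j\in[0,d]$ produces an $\F$-basis of $B_0$ by Lemma~\ref{L;primaryideal}(i) satisfying the matrix-unit relations $f_{ij}f_{rs}=\delta_{jr}f_{is}$ (immediate from \eqref{Eq;preliminary3} and \eqref{Eq;preliminary5}); hence $B_0$ is $\F$-algebra isomorphic to the full $(d+1)\times(d+1)$ matrix algebra over the division $\F$-algebra $\F$. The converse (iv) $\Rightarrow$ (i) is then immediate: a full matrix algebra over a division algebra is unital, so $B_0$ inherits an identity, and the computation used in (iii) $\Rightarrow$ (i) again forces $p\nmid k_\ell$ for every $\ell$.

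The main technical obstacle will be the centrality verification in (i) $\Rightarrow$ (ii). The reduction of $e\cdot E_a^*A_bE_c^*$ to a scalar multiple of $E_a^*JE_c^*$ requires applying Lemma~\ref{L;generalresults}(i) in transposed form via \eqref{Eq;preliminary1}, and the resulting coefficient is not visibly equal to the one coming from $E_a^*A_bE_c^*\cdot e$ until one invokes the correct instance of Lemma~\ref{L;trigular} pairing $p_{ab}^c$ with $p_{cb'}^a$ rather than with $p_{cb}^a$. All remaining implications reduce to short manipulations with \eqref{Eq;preliminary3}--\eqref{Eq;preliminary5} and the ideal structure of $B_0$ already recorded in Lemma~\ref{L;primaryideal}.
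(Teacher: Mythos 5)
Your proposal is correct, and the cycle (i) $\Rightarrow$ (ii) $\Rightarrow$ (iii) $\Rightarrow$ (i) matches the paper's argument essentially verbatim: the same idempotent $e=\sum_{i=0}^d\overline{k_i}^{-1}E_i^*JE_i^*$, the same centrality check via the transpose trick and the instance $k_cp_{ab}^c=k_ap_{cb'}^a$ of Lemma~\ref{L;trigular}, the same splitting $D=(I-e)T$, and the same contradiction $E_\ell^*JE_\ell^*=E_\ell^*JE_\ell^*\cdot e=O$ when $p\mid k_\ell$. The only genuine divergence is in (i) $\Leftrightarrow$ (iv). The paper argues abstractly: it invokes Lemma~\ref{L;properties}(i) (that $B_1$ is the unique maximal two-sided ideal of $B_0$) to conclude that $B_1=\{O\}$ makes $B_0$ a simple unital algebra, then applies the Artin--Wedderburn Theorem for the forward direction, and reverses this for (iv) $\Rightarrow$ (i). You instead exhibit explicit matrix units $f_{ij}=\overline{k_j}^{-1}E_i^*JE_j^*$ satisfying $f_{ij}f_{rs}=\delta_{jr}f_{is}$ (which checks out against \eqref{Eq;preliminary3} and \eqref{Eq;preliminary5}), giving the sharper conclusion $B_0\cong M_{d+1}(\F)$ with division algebra $\F$ itself, and you handle (iv) $\Rightarrow$ (i) by reusing the unitality computation from (iii) $\Rightarrow$ (i) rather than via the uniqueness of the maximal ideal. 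Your route is more elementary and self-contained (it bypasses both Lemma~\ref{L;properties}(i) and Artin--Wedderburn) and yields strictly more information; the paper's route is shorter given that Lemma~\ref{L;properties}(i) is already established for other purposes.
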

\begin{proof}
We prove (ii) by (i). Set $e_{B_0}=\sum_{i=0}^d\overline{k_i}^{-1}E_i^*JE_i^*\in B_0$ by (i) and the definition of $B_0$. By \eqref{Eq;preliminary3} and \eqref{Eq;preliminary5}, $E_a^*JE_b^*e_{B_0}=e_{B_0}E_a^*JE_b^*=E_a^*JE_b^*$ for any $a,b\in [0,d]$. So $e_{B_0}$ is the identity element of the $\F$-subalgebra $B_0$ of $T$ by the definition of $B_0$. By combining \eqref{Eq;preliminary3}, \eqref{Eq;preliminary1}, Lemmas \ref{L;generalresults} (i), and \ref{L;trigular}, notice that
$$E_a^*A_bE_c^*e_{B_0}=\overline{k_c}^{-1}\overline{p_{cb'}^a}E_a^*JE_c^*=\overline{k_a}^{-1}\overline{p_{ab}^c}E_a^*JE_c^*
=(E_c^*A_{b'}E_a^*e_{B_0})^t=e_{B_0}E_a^*A_bE_c^*$$
for any $a,b,c\in [0,d]$. So $e_{B_0}$ is a central element of $T$ by Lemma \ref{L;generalresults} (iii) and \eqref{Eq;preliminary6}. The proof of (ii) is now complete.

We prove (iii) by (ii). By (ii), there exists $f_{B_0}\in B_0$ such that $f_{B_0}$ is the identity element of the $\F$-subalgebra $B_0$ of $T$ and $f_{B_0}$ is a central element of $T$. Let $D$ denote $\{(I-f_{B_0})Z: Z\in T\}$. As $f_{B_0}$ is a central element of $T$, notice that $D$ is a two-sided ideal of $T$ by the definition of $D$. As $f_{B_0}$ is the identity element of the $\F$-subalgebra $B_0$ of $T$, notice that $f_{B_0}^2=f_{B_0}$, $(I-f_{B_0})^2=I-f_{B_0}$, and $f_{B_0}(I-f_{B_0})=O$, which implies that $B_0\cap D=\{O\}$ by the definition of $D$. As $B_0$ is a two-sided ideal of $T$ and $Z=f_{B_0}Z+(I-f_{B_0})Z$ for every $Z\in T$, we thus get that $T$ is a direct sum of $B_0$ and $D$. The proof of (iii) is now complete.

We prove (i) by (iii). Since $I\in T$ and $B_0$ is a two-sided ideal of $T$, by (iii), notice that the $\F$-subalgebra $B_0$ of $T$ is a unital $\F$-algebra. By the definition of $B_0$, assume that $\sum_{i=0}^d\sum_{j=0}^dc_{ij}E_i^*JE_j^*$ is the identity element of the $\F$-subalgebra $B_0$ of $T$, where $c_{ij}\in\F$ for any $i,j\in [0,d]$. Suppose that $S$ is not a $p'$-valenced scheme. Then there exists $\ell\in [0,d]$ such that $p\mid k_\ell$. Moreover, observe that $O\neq E_\ell^*JE_\ell^*\in B_0$ by \eqref{Eq;preliminary4}. Since $\sum_{i=0}^d\sum_{j=0}^dc_{ij}E_i^*JE_j^*$ is the identity element of the $\F$-subalgebra $B_0$ of $T$, by \eqref{Eq;preliminary3} and \eqref{Eq;preliminary5}, we thus get that $O\neq E_\ell^*JE_\ell^*=E_\ell^*JE_\ell^*(\sum_{i=0}^d\sum_{j=0}^dc_{ij}E_i^*JE_j^*)=O$, which is a contradiction. (i) thus follows.

We prove (iv) by (i). Notice that $B_1=\{O\}$ by (i) and the definition of $B_1$. As (i) implies (ii), by Lemma \ref{L;properties} (i), the $\F$-subalgebra $B_0$ of $T$ is a simple unital $\F$-algebra. So (iv) follows from the Artin-Wedderburn Theorem.

We prove (i) by (iv). By (iv), observe that the $\F$-subalgebra $B_0$ of $T$ is a simple unital $\F$-algebra. So it has no nonzero proper two-sided ideal. Hence $B_1=\{O\}$ by Lemma \ref{L;properties} (i). (i) thus follows from the definition of $B_1$.
\end{proof}
For further discussion, we need the following two lemmas.
\begin{lem}\label{L;computation}
Let $n\in \mathbb{N}_0$ and $i_m, j_m, \ell_m\in [0,d]$ for every $m\in [0,n]$.
\begin{enumerate}[(i)]
\item [\em (i)] If $\min\bigcup_{m=0}^n\{k_{i_m},k_{\ell_m}\}=1$, then  $\prod_{m=0}^n(E_{i_m}^*A_{j_m}E_{\ell_m}^*)\in \langle\{E_{i_0}^*JE_{\ell_n}^*\}\rangle_\F$.
\item [\em (ii)] If $k_{i_0}=1$ or $k_{\ell_n}=1$, then $\prod_{m=0}^n(E_{i_m}^*A_{j_m}E_{\ell_m}^*)\in \langle\{E_{i_0}^*JE_{\ell_n}^*\}\rangle_\F$.
\end{enumerate}
\end{lem}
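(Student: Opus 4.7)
The plan is to derive (ii) as an immediate consequence of (i), since either $k_{i_0}=1$ or $k_{\ell_n}=1$ forces $\min\bigcup_{m=0}^n\{k_{i_m},k_{\ell_m}\}=1$, so the hypothesis of (i) is met. I therefore focus on (i).

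For (i), my first step is to eliminate the trivial case: if $\ell_m\neq i_{m+1}$ for some $m\in [0,n-1]$, then \eqref{Eq;preliminary3} gives $E_{\ell_m}^*E_{i_{m+1}}^*=O$, so the product is $O$ and lies in $\langle\{E_{i_0}^*JE_{\ell_n}^*\}\rangle_\F$ trivially. Thus I may assume $\ell_m=i_{m+1}$ for every $m\in [0,n-1]$. Under the hypothesis, I choose $a\in [0,n]$ with $\min\{k_{i_a},k_{\ell_a}\}=1$ and apply Lemma \ref{L;generalresults} (ii): either $E_{i_a}^*A_{j_a}E_{\ell_a}^*=O$ (again giving $O$ and we are done), or $E_{i_a}^*A_{j_a}E_{\ell_a}^*=E_{i_a}^*JE_{\ell_a}^*$. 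In the latter case I set $P_1=\prod_{m=0}^{a-1}(E_{i_m}^*A_{j_m}E_{\ell_m}^*)$ and $P_2=\prod_{m=a+1}^{n}(E_{i_m}^*A_{j_m}E_{\ell_m}^*)$, with the convention that each is absent at the corresponding endpoint. The full product becomes $P_1\cdot E_{i_a}^*JE_{\ell_a}^*\cdot P_2$; using $\ell_{a-1}=i_a$, $\ell_a=i_{a+1}$, and \eqref{Eq;preliminary3} to absorb the adjacent dual idempotents into $P_1$ and $P_2$, I reduce this to $P_1\cdot J\cdot P_2$.

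The key iterative step is to collapse $P_1J$ to a scalar multiple of $E_{i_0}^*J$ and $JP_2$ to a scalar multiple of $JE_{\ell_n}^*$. For $P_1J$, I repeatedly invoke Lemma \ref{L;generalresults} (i) in the form $E_{i_m}^*A_{j_m}E_{\ell_m}^*J=\overline{p_{\ell_m j_m'}^{i_m}}E_{i_m}^*J$, for $m=a-1,a-2,\ldots,0$; between successive applications, the equality $\ell_{m-1}=i_m$ together with \eqref{Eq;preliminary3} lets me reabsorb the trailing $E_{i_m}^*J$ into the preceding product, producing $P_1J=\alpha E_{i_0}^*J$ for some $\alpha\in\F$. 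For $JP_2$, I use \eqref{Eq;preliminary1} to transpose Lemma \ref{L;generalresults} (i) into the identity $JE_i^*A_jE_\ell^*=(E_\ell^*A_{j'}E_i^*J)^t=\overline{p_{ij}^{\ell}}JE_\ell^*$, and iterate this for $m=a+1,\ldots,n$ to get $JP_2=\beta JE_{\ell_n}^*$ for some $\beta\in\F$. Combining these yields $P_1\cdot J\cdot P_2=\alpha E_{i_0}^*\cdot JP_2=\alpha\beta E_{i_0}^*JE_{\ell_n}^*\in\langle\{E_{i_0}^*JE_{\ell_n}^*\}\rangle_\F$, as required.

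I do not expect a substantive obstacle; the only mildly delicate part is tracking the boundary cases $a=0$ and $a=n$, where one of $P_1, P_2$ is empty and the argument degenerates to applying only one side, and correctly deriving the right-side reduction identity from Lemma \ref{L;generalresults} (i) by transposition via \eqref{Eq;preliminary1}.
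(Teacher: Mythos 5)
Your proposal is correct and follows essentially the same route as the paper: reduce to the case where the product is nonzero, use Lemma \ref{L;generalresults} (ii) to replace the factor at the distinguished index by $E_{i_a}^*JE_{\ell_a}^*$, and then collapse both sides via Lemma \ref{L;generalresults} (i) together with \eqref{Eq;preliminary1} and \eqref{Eq;preliminary3}, with (ii) as a special case of (i). The paper merely states this combination tersely, whereas you spell out the iterative collapse and the transposed identity explicitly; no gap.
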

\begin{proof}
We may assume further that $\prod_{m=0}^n(E_{i_m}^*A_{j_m}E_{\ell_m}^*)\neq O$. For (i), note that there is $q\in [0,n]$ such that $\min\{k_{i_q}, k_{\ell_q}\}=1$. Moreover, $E_{i_q}^*A_{j_q}E_{\ell_q}^*\neq O$ by \eqref{Eq;preliminary6} and the fact $\prod_{m=0}^n(E_{i_m}^*A_{j_m}E_{\ell_m}^*)\neq O$. We thus have $E_{i_q}^*A_{j_q}E_{\ell_q}^*=E_{i_q}^*JE_{\ell_q}^*$ by Lemma \ref{L;generalresults} (ii). (i) is shown by combining the equality $E_{i_q}^*A_{j_q}E_{\ell_q}^*=E_{i_q}^*JE_{\ell_q}^*$, \eqref{Eq;preliminary6}, \eqref{Eq;preliminary3}, \eqref{Eq;preliminary1}, and Lemma \ref{L;generalresults} (i). As (ii) is a special case of (i), (ii) is proved by (i).
\end{proof}
\begin{lem}\label{L;rightthin}
If $Z\in \mathrm{Ann}_T(W_0)$, then $ZE_i^*=O$ for every $R_i\in O_\vartheta(S)$.
\end{lem}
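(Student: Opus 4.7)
The plan is to exploit the fact that elements of $O_\vartheta(S)$ have valency $1$, which forces $E_i^{*}$ to be essentially a rank-one diagonal matrix, making the image of left-multiplication by $E_i^{*}$ on $\F^X$ a one-dimensional subspace that lives inside $W_0$.

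First I would unwind the hypothesis $R_i\in O_\vartheta(S)$: by definition $k_i=1$, so $xR_i=\{z\}$ for a unique $z\in X$. By the very definition of the dual $\F$-idempotent, $E_i^{*}=E_{zz}$, and $E_i^{*}\mathbf{1}$ is precisely the standard basis vector of $\F^X$ at position $z$. Thus for any column vector $y\in \F^X$, the product $E_i^{*}y$ picks out the $z$-entry of $y$ and returns it in the $z$-coordinate, i.e.\ $E_i^{*}y=y_{z}E_i^{*}\mathbf{1}$, which lies in $\langle\{E_i^{*}\mathbf{1}\}\rangle_\F\subseteq W_0$.

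Now apply the annihilation hypothesis. Since $Z\in\mathrm{Ann}_T(W_0)$ and $E_i^{*}y\in W_0$, we obtain $ZE_i^{*}y=y_{z}\cdot Z(E_i^{*}\mathbf{1})=\mathbf{0}$ for every $y\in \F^X$. As $M_X(\F)$ acts faithfully on $\F^X$, the matrix $ZE_i^{*}$ must be $O$, completing the proof.

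There is no real obstacle here; the only thing to be careful about is translating $k_i=1$ into the concrete statement $E_i^{*}=E_{zz}$ and then observing that the image of $E_i^{*}$ acting on $\F^X$ is spanned by $E_i^{*}\mathbf{1}$, an element of $W_0$. One could alternatively phrase the argument matricially as $ZE_i^{*}=ZE_{zz}=(Ze_z)e_z^{t}=(ZE_i^{*}\mathbf{1})e_z^{t}=O$, avoiding any mention of arbitrary test vectors $y$.
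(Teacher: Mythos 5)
Your proof is correct, and it takes a genuinely different route from the paper's. You argue directly from the shape of $E_i^*$: since $k_i=1$, we have $xR_i=\{z\}$ for a single $z\in X$, so $E_i^*=E_{zz}$, its column space is spanned by $E_i^*\mathbf{1}$, and $E_i^*\mathbf{1}$ lies in $W_0$ by the definition of the primary module; hence $ZE_i^*$ kills every vector of $\F^X$ and must be $O$. The paper instead argues by contradiction inside the algebra $T$: it decomposes $ZE_i^*=\sum_{\ell}E_\ell^*ZE_i^*$ via \eqref{Eq;preliminary2}, invokes Lemma \ref{L;computation} (ii) (ultimately Lemma \ref{L;generalresults} (ii)) to force $E_\ell^*ZE_i^*$ to be a nonzero scalar multiple of $E_\ell^*JE_i^*$, and then evaluates on $E_i^*\mathbf{1}$ using \eqref{Eq;preliminary5} and $k_i=1$ to contradict $Z\in\mathrm{Ann}_T(W_0)$. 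Your argument is more elementary and shorter, since it never needs the structural lemma about products $E_i^*A_jE_\ell^*$ with a thin index, nor the fact that $\mathrm{Ann}_T(W_0)$ is a two-sided ideal; the paper's approach has the advantage of staying entirely within the generators-and-relations calculus of $T$ (the same computation is reused almost verbatim in Lemma \ref{L;characterization2} and Lemmas \ref{L;Eric1}--\ref{L;Eric3}), whereas yours exploits the concrete realization of $E_i^*$ as the rank-one matrix $E_{zz}$. Both are complete proofs of the stated lemma.
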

\begin{proof}
Suppose that there exists $R_j\in O_\vartheta(S)$ such that $ZE_j^*\neq O$. We thus deduce that $O\neq ZE_j^*=IZE_j^*=\sum_{i=0}^dE_i^*ZE_j^*$ by \eqref{Eq;preliminary2}. So there exists $\ell\in [0,d]$ such that $E_\ell^*ZE_j^*\neq O$. Since $k_j=1$, $Z\in \mathrm{Ann}_T(W_0)$, and $\mathrm{Ann}_T(W_0)$ is a two-sided ideal of $T$, by \eqref{Eq;preliminary3}, Lemmas \ref{L;generalresults} (iii), \ref{L;computation} (ii), we thus have $E_\ell^*ZE_j^*=c_{\ell jZ}E_\ell^*JE_j^*\in\mathrm{Ann}_T(W_0)$, where $0\neq c_{\ell jZ}\in \F$. Since $k_j=1$ and $E_j^*\mathbf{1}\in W_0$ by the definition of $W_0$, by \eqref{Eq;preliminary3} and \eqref{Eq;preliminary5}, we thus have $E_\ell^*ZE_j^*E_j^*\mathbf{1}=c_{\ell jZ}E_\ell^*JE_j^*\mathbf{1}=c_{\ell jZ}E_\ell^*\mathbf{1}\neq \mathbf{0}$, which contradicts the definition of $\mathrm{Ann}_T(W_0)$. The desired lemma thus follows.
\end{proof}
The following lemma includes some characterizations of the $p'$-valenced schemes.
\begin{lem}\label{L;characterization2}
The following statements are equivalent:
\begin{enumerate}[(i)]
\item [\em (i)] $S$ is a $p'$-valenced scheme;
\item [\em (ii)] If $Z\in \mathrm{Ann}_T(W_0)$, then $E_i^*Z=ZE_i^*=O$ for every $R_i\in O_\vartheta(S)$;
\item [\em (iii)]If $\tilde{Z}\in \mathrm{Rad}(T)$, then $E_i^*\tilde{Z}=\tilde{Z}E_i^*=O$ for every $R_i\in O_\vartheta(S)$.
\end{enumerate}
\end{lem}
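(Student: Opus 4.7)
The plan is to establish (i) $\Rightarrow$ (ii) and (i) $\Rightarrow$ (iii), and then derive the reverse implications (ii) $\Rightarrow$ (i) and (iii) $\Rightarrow$ (i) simultaneously by a single contrapositive argument. The main obstacle is the step (i) $\Rightarrow$ (ii) for the equality $E_i^* Z = O$, since Lemma \ref{L;rightthin} already yields $Z E_i^* = O$ unconditionally.

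For (i) $\Rightarrow$ (ii), I invoke Lemma \ref{L;characterization1} to write $T = B_0 \oplus D$ as a direct sum of two-sided ideals, so that $B_0 \cdot D \subseteq B_0 \cap D = \{O\}$. It then suffices to show (a) $E_i^* \in B_0$ and (b) $Z \in D$. For (a), when $R_i \in O_\vartheta(S)$ we have $\min\{k_i, k_i\} = 1$, so Lemma \ref{L;generalresults} (ii) applied to $E_i^* A_0 E_i^* = E_i^* \neq O$ gives $E_i^* = E_i^* J E_i^* \in B_0$. For (b), let $e_{B_0}$ denote the central identity of $B_0$ constructed in the proof of Lemma \ref{L;characterization1}, and decompose $Z = e_{B_0} Z + (I - e_{B_0}) Z$. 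Since $(I - e_{B_0}) Z \in D$, it suffices to show $e_{B_0} Z = O$. This element lies in $B_0 \cap \mathrm{Ann}_T(W_0)$, so I reduce to verifying $B_0 \cap \mathrm{Ann}_T(W_0) = \{O\}$. For $Y = \sum_{a, b \in [0,d]} c_{ab} E_a^* J E_b^*$ in this intersection, \eqref{Eq;preliminary3} and \eqref{Eq;preliminary5} give $Y \cdot E_h^* \mathbf{1} = \overline{k_h} \sum_a c_{ah} E_a^* \mathbf{1}$ for every $h$; this must vanish, and since $\overline{k_h} \neq 0$ under (i) and $\{E_a^* \mathbf{1} : a \in [0,d]\}$ is $\F$-linearly independent by Lemma \ref{L;primemodule} (i), every $c_{ah}$ is zero. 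Hence $Y = O$, $e_{B_0} Z = O$, and $Z \in D$.

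For (i) $\Rightarrow$ (iii), Lemma \ref{L;primarymoduleproperty} (iv) tells us $W_0$ is irreducible under (i), so $\mathrm{Rad}(T) \subseteq \mathrm{Ann}_T(W_0)$, and (iii) follows from the already-established (ii).

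Finally, I handle both remaining implications by a single contrapositive. Assuming $S$ is not a $p'$-valenced scheme, pick $j \in [0,d]$ with $p \mid k_j$ and set $\tilde{Z} = E_0^* J E_j^*$; by \eqref{Eq;preliminary4} this is nonzero. A computation using \eqref{Eq;preliminary3} and \eqref{Eq;preliminary5} yields $\tilde{Z} \cdot E_h^* \mathbf{1} = \delta_{jh} \overline{k_j} E_0^* \mathbf{1} = \mathbf{0}$ for every $h$, so $\tilde{Z} \in \mathrm{Ann}_T(W_0)$. Moreover $p \mid k_0 k_j$ places $\tilde{Z}$ in $B_1 \subseteq \mathrm{Rad}(T)$ by Lemma \ref{L;properties} (iii). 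However $E_0^* \tilde{Z} = \tilde{Z} \neq O$ with $R_0 \in O_\vartheta(S)$, contradicting both (ii) and (iii). Either hypothesis therefore forces (i), closing the equivalence.
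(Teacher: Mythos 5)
Your proof is correct, and the implications (i) $\Rightarrow$ (iii) and the combined contrapositive (ii) or (iii) $\Rightarrow$ (i) coincide with the paper's argument (same witness $E_0^*JE_a^*$ lying in $B_1\cap\mathrm{Ann}_T(W_0)$). Where you genuinely diverge is (i) $\Rightarrow$ (ii). The paper argues pointwise: assuming $E_j^*Z\neq O$ for some thin $R_j$, it extracts a nonzero component $E_j^*ZE_\ell^*=c\,E_j^*JE_\ell^*$ via Lemma \ref{L;computation} (ii), then shows this component moves $E_\ell^*\mathbf{1}$ to $c\,\overline{k_\ell}\,E_j^*\mathbf{1}\neq\mathbf{0}$ (using $p\nmid k_\ell$), contradicting $Z\in\mathrm{Ann}_T(W_0)$; the other equality comes from Lemma \ref{L;rightthin}. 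You instead exploit the structural decomposition $T=B_0\oplus D$ of Lemma \ref{L;characterization1}: the observation $E_i^*=E_i^*JE_i^*\in B_0$ for thin $R_i$ (a clean consequence of Lemma \ref{L;generalresults} (ii) applied to $E_i^*A_0E_i^*$), together with the new fact that $B_0\cap\mathrm{Ann}_T(W_0)=\{O\}$ under (i), forces $Z\in D$ and hence $E_i^*Z, ZE_i^*\in B_0D\cup DB_0\subseteq B_0\cap D=\{O\}$. Your intersection computation $YE_h^*\mathbf{1}=\overline{k_h}\sum_a c_{ah}E_a^*\mathbf{1}$ is correct and uses the $p'$-valenced hypothesis exactly where needed. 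The trade-off: the paper's route is more elementary and self-contained, while yours buys the stronger structural statement that, for a $p'$-valenced scheme, $\mathrm{Ann}_T(W_0)$ is contained in the complementary ideal $D$ (equivalently, is annihilated by $e_{B_0}$ on both sides), which in fact yields $E_i^*Z=ZE_i^*=O$ for all thin $R_i$ simultaneously without appealing to Lemma \ref{L;rightthin} at all.
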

\begin{proof}
We prove (ii) by (i). By Lemma \ref{L;rightthin}, it is enough to check that $E_i^*Z=O$ for every $R_i\in O_\vartheta(S)$. Suppose that there is $R_j\in O_\vartheta(S)$ such that $E_j^*Z\neq O$. We thus have $O\neq E_j^*Z=E_j^*ZI=\sum_{i=0}^dE_j^*ZE_i^*$ by \eqref{Eq;preliminary2}. So there exists $\ell\in [0,d]$ such that $E_j^*ZE_\ell^*\neq O$. Since $k_j=1$, $Z\in \mathrm{Ann}_T(W_0)$, and $\mathrm{Ann}_T(W_0)$ is a two-sided ideal of $T$, by \eqref{Eq;preliminary3}, Lemmas \ref{L;generalresults} (iii), \ref{L;computation} (ii), we thus have $E_j^*ZE_\ell^*=c_{j\ell Z}E_j^*JE_\ell^*\in \mathrm{Ann}_T(W_0)$, where $0\neq c_{j\ell Z}\in \F$. Since $E_\ell^*\mathbf{1}\in W_0$ by the definition of $W_0$, according to \eqref{Eq;preliminary3} and \eqref{Eq;preliminary5}, we thus have $E_j^*ZE_\ell^*E_\ell^*\mathbf{1}=c_{j\ell Z}E_j^*JE_\ell^*\mathbf{1}=c_{j\ell Z}\overline{k_\ell}E_j^*\mathbf{1}$. By (i), notice that $p\nmid k_\ell$ and
$E_j^*ZE_\ell^*E_\ell^*\mathbf{1}=c_{j\ell Z}\overline{k_\ell}E_j^*\mathbf{1}\neq \mathbf{0}$. The inequality $E_j^*ZE_\ell^*E_\ell^*\mathbf{1}\neq \mathbf{0}$ contradicts the definition of $\mathrm{Ann}_T(W_0)$. (ii) thus follows.

We prove (iii) by (i). Observe that $\mathrm{Rad}(T)\subseteq \mathrm{Ann}_T(W_0)$ by (i) and Lemma \ref{L;primarymoduleproperty} (iv). Observe that (ii) holds since (i) holds. (iii) thus follows from the containment $\mathrm{Rad}(T)\subseteq \mathrm{Ann}_T(W_0)$ and (ii).

We prove (i) by (ii) or (iii). Suppose that (i) does not hold. There exists $a\in [0,d]$ such that $p\mid k_a$. Notice that $O\neq E_0^*E_0^*JE_a^*=E_0^*JE_a^*\in \mathrm{Rad}(T)$ by combining \eqref{Eq;preliminary3}, \eqref{Eq;preliminary4}, the definition of $B_1$, and Lemma \ref{L;properties} (iii). By \eqref{Eq;preliminary3} and \eqref{Eq;preliminary5}, $E_0^*JE_a^*E_b^*\mathbf{1}=\mathbf{0}$ for every $b\in [0,d]$. So $E_0^*JE_a^*\in \mathrm{Ann}_T(W_0)$ by the definitions of $W_0$ and $\mathrm{Ann}_T(W_0)$. Hence we have $O\neq E_0^*E_0^*JE_a^*=E_0^*JE_a^*\in \mathrm{Rad}(T)\cap \mathrm{Ann}_T(W_0)$, which contradicts (ii) and (iii) as $R_0\in O_\vartheta(S)$. Therefore (i) follows if (ii) or (iii) holds.
\end{proof}
\begin{rem}\label{R;remark7}
By \cite[Theorem 3.4]{Han1}, the equality $\mathrm{Rad}(T)=\{O\}$ holds only if $S$ is a $p'$-valenced scheme. Since $\mathrm{Rad}(T)=\{O\}$ implies that Lemma \ref{L;characterization2} (iii) holds, this result can also be verified by Lemma \ref{L;characterization2}. In general, $\mathrm{Rad}(T)$ may not be $\{O\}$ even if $S$ is a $p'$-valenced scheme (see \cite[5.1]{Han1}).
\end{rem}
\begin{eg}\label{E;lemma4.9}
In general, notice that $\mathrm{Rad}(T)$ may not equal $\mathrm{Ann}_T(W_0)$ even if $S$ is a $p'$-valenced scheme. Let us illustrate this fact by a counterexample. Assume that $p>2$ and $S$ is the scheme of order $5$, No. $2$ in \cite{HM}. Observe that $S=\{R_0, R_1, R_2\}$, where $k_0=1$ and $k_1=k_2=2$. Hence $S$ is a $p'$-valenced scheme. By computation, $O\neq E_1^*A_1E_2^*-E_1^*A_2E_2^*\in \mathrm{Ann}_T(W_0)$. However, $\mathrm{Rad}(T)=\{O\}$ by \cite[Theorem B]{Jiang}.
\end{eg}
The following lemma describes a characterization of the $p'$-valenced schemes.
\begin{lem}\label{L;characterization3}
The following statements are equivalent:
\begin{enumerate}[(i)]
\item [\em (i)] $S$ is a $p'$-valenced scheme;
\item [\em (ii)] For every decomposition of $_{T}T$ into a direct sum of indecomposable $T$-modules, there exist exactly $d+1$ indecomposable direct summands isomorphic to $W_0$ as $T$-modules.
\end{enumerate}
\end{lem}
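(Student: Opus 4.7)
The plan for the forward direction $(\mathrm{i})\Rightarrow(\mathrm{ii})$ is to use the decomposition $T=B_0\oplus D$ from Lemma \ref{L;characterization1} (iii) together with the central idempotent of $B_0$ from Lemma \ref{L;characterization1} (ii). First I would verify by direct computation using \eqref{Eq;preliminary3} and \eqref{Eq;preliminary5} that the central idempotent $e_{B_0}=\sum_{i=0}^d\overline{k_i}^{-1}E_i^*JE_i^*$ acts as the identity on the spanning set $\{E_j^*\mathbf{1}:j\in[0,d]\}$ of $W_0$, hence on all of $W_0$. On the other hand, since $e_{B_0}$ is the identity of $B_0$ and $T=B_0\oplus D$ as two-sided ideals, $e_{B_0}$ annihilates $D$. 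Consequently, no nonzero $T$-submodule of $D$ can be isomorphic to $W_0$, since then $e_{B_0}$ would act simultaneously as zero and as the identity on it. Combining this with Lemma \ref{L;properties} (v), which gives ${_{T}B_0}\cong W_0^{d+1}$, and with the indecomposability of $W_0$ from Lemma \ref{L;primarymoduleproperty} (ii), the decomposition $T=B_0\oplus D$ refines into an indecomposable decomposition of ${_{T}T}$ with exactly $d+1$ summands isomorphic to $W_0$. The Krull-Schmidt theorem then propagates this count to every decomposition of ${_{T}T}$.

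For the converse $(\mathrm{ii})\Rightarrow(\mathrm{i})$ I would argue as follows. Since (ii) forces $W_0$ to appear as an indecomposable direct summand of ${_{T}T}$, the indecomposable $T$-module $W_0$ is projective. By Lemma \ref{L;primarymoduleproperty} (i), $W_1$ is the unique maximal $T$-submodule of $W_0$, so $W_0$ is the projective cover of the irreducible $T$-module $L:=W_0/W_1$. Invoking the standard structure theorem for finite-dimensional $\F$-algebras, namely that the multiplicity of an indecomposable projective in the left regular module equals $\dim_D L$ where $D=\mathrm{End}_T(L)$, assumption (ii) yields $d+1=\dim_D L$. Since $D$ is a division $\F$-algebra with $\dim_\F D\geq 1$, this gives $\dim_\F L=\dim_D L\cdot\dim_\F D\geq d+1$.

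By Lemma \ref{L;submoduleseries} (ii) applied with $n=0$, $\dim_\F L=|S_0|$, and by definition of $S_0$ we always have $|S_0|\leq d+1$. Combining these bounds forces $|S_0|=d+1$, which means $S_0=[0,d]$, i.e., $S$ is a $p'$-valenced scheme. The step I expect to require the most care is the invocation of the multiplicity formula in the converse direction: one needs to verify that $W_0$, once known to be projective, genuinely is the projective cover of $W_0/W_1$, which is supplied by the uniqueness of the maximal $T$-submodule in Lemma \ref{L;primarymoduleproperty} (i); the remaining inequalities are routine bookkeeping.
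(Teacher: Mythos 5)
Your proof is correct. The forward direction is essentially the paper's: both use the splitting $T=B_0\oplus D$ from Lemma \ref{L;characterization1} (iii), the observation that the central idempotent of $B_0$ acts as the identity on $W_0$ but annihilates $D$ (so no summand of ${_{T}D}$ can be isomorphic to $W_0$), Lemma \ref{L;properties} (v), indecomposability of $W_0$, and Krull--Schmidt; your direct computation of $e_{B_0}E_j^*\mathbf{1}=E_j^*\mathbf{1}$ via \eqref{Eq;preliminary3} and \eqref{Eq;preliminary5} just makes explicit what the paper extracts from Lemma \ref{L;properties} (v). The converse, however, takes a genuinely different route. The paper stays at the level of explicit matrices: it proves that every submodule of ${_{T}T}$ isomorphic to $W_0$ lies inside $B_0$ (using $k_0=1$ and Lemma \ref{L;computation} (ii)), concludes by a dimension count that the span $V$ of the $d+1$ copies of $W_0$ equals $B_0$ and is a direct summand of ${_{T}T}$, and then gets a contradiction from $E_j^*JE_j^*f_V=O\neq E_j^*JE_j^*$ when $p\mid k_j$. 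You instead note that (ii) makes $W_0$ projective, identify it via Lemma \ref{L;primarymoduleproperty} (i) as the projective cover of $L=W_0/W_1$, and invoke the multiplicity formula for indecomposable projectives in the regular module to force $\dim_\F L\geq d+1$, whence $|S_0|=d+1$ by Lemma \ref{L;submoduleseries} (ii). Your argument is shorter and yields a small bonus ($\dim_\F\mathrm{End}_T(W_0/W_1)=1$), at the cost of importing the Wedderburn/projective-cover machinery; the paper's version is longer but self-contained and reuses the ideal $B_0$ around which the neighbouring lemmas are organized. Both are valid.
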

\begin{proof}
We prove (ii) by (i). By (i) and Lemma \ref{L;characterization1} (iii), there is a two-sided ideal $D$ of $T$ such that $T$ is a direct sum of $B_0$ and $D$. Hence $D$ is a $T$-module under the left multiplication action of $T$. Use $_{T}D$ to denote this $T$-module. Hence $_{T}T={_{T}B_0}\oplus {_{T}D}$. Notice that $W_0$ is not isomorphic to a direct summand of $_{T}D$ for every decomposition of $_{T}D$ into a direct sum of $T$-modules. Otherwise, we suppose that there indeed exist $T$-submodules $M$ and $N$ of $_{T}D$ such that $_{T}D=M\oplus N$ and $M\cong W_0$ as $T$-modules. By (i) and Lemma \ref{L;characterization1} (ii), the $\F$-subalgebra $B_0$ of $T$ is unital. Moreover, its identity element $f_{B_0}$ is a central element of $T$. Since $T$ is a direct sum of the two-sided ideals $B_0$ and $D$, notice that $I-f_{B_0}$ is also the identity element of the $\F$-subalgebra $D$ of $T$. By Lemma \ref{L;properties} (v) and the definition of $W_0$, we thus deduce that $f_{B_0}E_i^*\mathbf{1}=E_i^*\mathbf{1}$ for every $i\in [0,d]$. As $_{T}D=M\oplus N$ and $M\cong W_0$ as $T$-modules, by the definition of $W_0$ again, we can also deduce that $(I-f_{B_0})E_i^*\mathbf{1}=E_i^*\mathbf{1}$ for every $i\in [0,d]$. Since $f_{B_0}(I-f_{B_0})=O$, $\mathbf{0}=f_{B_0}(I-f_{B_0})E_i^*\mathbf{1}=E_i^*\mathbf{1}\neq \mathbf{0}$ for every $i\in [0,d]$, which is a contradiction. Since $_{T}T={_{T}B_0}\oplus {_{T}D}$ and Lemmas \ref{L;properties} (v), \ref{L;primarymoduleproperty} (ii) hold, we thus observe that there exists a decomposition of $_{T}T$ into a direct sum of indecomposable $T$-modules such that exactly $d+1$ indecomposable direct summands are isomorphic to $W_0$ as $T$-modules. (ii) thus follows from the Krull-Schmidt Theorem.

For any given $T$-submodule $U$ of $_{T}T$, claim that $U\subseteq B_0$ if $U\cong W_0$ as $T$-modules.
Assume that $U\cong W_0$ as $T$-modules. Let $\phi$ denote a $T$-isomorphism from $W_0$ to $U$. Notice that $W_0$ has an $\F$-basis $\{E_i^*\mathbf{1}: i\in [0,d]\}$ by the definition of $W_0$ and Lemma \ref{L;primemodule} (i). Since $\phi$ is a $T$-isomorphism and $\mathbf{1}=\sum_{i=0}^dE_i^*\mathbf{1}\in W_0$ by \eqref{Eq;preliminary2}, we thus get that $\{E_i^*\phi(\mathbf{1}): i\in [0,d]\}$ is an $\F$-basis of $U$. Since $k_0=1$, $\phi(\mathbf{1})\in U$, and $U\subseteq {_{T}T}$, $E_0^*\phi(\mathbf{1})\in \langle\{E_0^*JE_i^*: i\in [0,d]\}\rangle_\F$ by combining \eqref{Eq;preliminary3}, Lemmas \ref{L;generalresults} (iii), and \ref{L;computation} (ii). Hence $E_0^*\phi(\mathbf{1})\in B_0$ by the definition of $B_0$. As $\phi$ is a $T$-isomorphism and $k_0=1$, by \eqref{Eq;preliminary5}, notice that $E_i^*\phi(\mathbf{1})=\phi(E_i^*\mathbf{1})=\phi(E_i^*JE_0^*\mathbf{1})=E_i^*JE_0^*\phi(\mathbf{1})$ for every $i\in [0,d]$. As $E_0^*\phi(\mathbf{1})\in B_0$ and $B_0$ is a two-sided ideal of $T$, we thus deduce that $E_i^*\phi(\mathbf{1})\in B_0$ for every $i\in [0,d]$. The desired claim thus follows as $B_0$ contains an $\F$-basis of $U$.

We prove (i) by (ii). By (ii), there exist $T$-submodules $V$ and $W$ of $_{T}T$ such that $_{T}T=V\oplus W$ and $V$ is isomorphic to a direct sum of $d+1$ copies of $W_0$ as $T$-modules. Therefore $V\subseteq B_0$ by the Krull-Schmidt Theorem and the proven claim. Moreover, as $\dim_\F W_0=d+1$ and $\dim_\F B_0=(d+1)^2$, we also observe that $\dim_\F V=\dim_\F B_0$. Therefore we have $V=B_0$ and $_{T}T=B_0\oplus W$. So there exist $f_V\in B_0$ and $f_W\in W$ such that $I=f_V+f_W$. As $B_0$ is a two-sided ideal of $T$ and $W$ is a $T$-submodule of $_{T}T$, notice that $Zf_W\in B_0\cap W$ for every $Z\in B_0$. As $_{T}T=B_0\oplus W$, $Zf_W=O$ for every $Z\in B_0$. Hence $Zf_V=Z$ for every $Z\in B_0$. Suppose that (i) does not hold. Then there exists $j\in [0,d]$ such that $p\mid k_j$. Since $f_V\in B_0$, by the definition of $B_0$, notice that $f_V\in \langle\{E_i^*JE_\ell^*: i,\ell\in [0,d]\}\rangle_\F$. Since $E_j^*JE_j^*\in B_0$, we thus deduce that $O=E_j^*JE_j^*f_V=E_j^*JE_j^*\neq O$ by combining \eqref{Eq;preliminary3}, \eqref{Eq;preliminary4}, \eqref{Eq;preliminary5}, and the known result that $Zf_V=Z$ for every $Z\in B_0$. So we have a contradiction. (i) thus follows.
\end{proof}
For further discussion, we recall the following definition and present a lemma.
\begin{defn}\label{D;dualmodule}
Let $U$ be a $T$-module. Let $\mathrm{Hom}_\F(U, \F)$ denote the $\F$-linear space generated by all linear functionals from $U$ to $\F$. By Lemma \ref{L;generalresults} (iii), \eqref{Eq;preliminary6}, and \eqref{Eq;preliminary1}, $Z^t\in T$ for every $Z\in T$. Let $T$ act on $\mathrm{Hom}_\F(U, \F)$ by setting $(Z\psi)(\hat{u})=\psi(Z^t\hat{u})$ for any $Z\in T$, $\psi\in \mathrm{Hom}_\F(U, \F)$, and $\hat{u}\in U$. So $\mathrm{Hom}_\F(U,\F)$ is a $T$-module under the defined action of $T$. Call this $T$-module the contragredient $T$-module of $U$. Let $U^{\circ}$ denote the contragredient $T$-module of $U$. Notice that $\dim_\F U^{\circ}=\dim_\F U$. Call $U$ a self-contragredient $T$-module if $U\cong U^\circ$ as $T$-modules.
\end{defn}
\begin{lem}\label{L;self-contragredient}
If $n\in \mathbb{N}_0$, $Q_n\neq \varnothing$, and $C\in Q_n$, then $Irr_n(C)$ is a self-contragredient $T$-module.
\end{lem}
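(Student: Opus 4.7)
The plan is to produce an explicit $T$-isomorphism $\Phi\colon Irr_n(C)\to Irr_n(C)^{\circ}$. For each $i\in S_n$, the valency decomposes as $k_i=p^n u_i$ with $p\nmid u_i$, so $\overline{u_i}\in\F\setminus\{0\}$. Let $\{\psi_i:i\in C\}$ be the basis of $Irr_n(C)^{\circ}$ dual to the basis $B_n(C)$ supplied by Notation~\ref{N;notation3}. Define $\Phi$ on $B_n(C)$ by $\Phi(E_i^*\mathbf{1}+W_{n+1})=\overline{u_i}\,\psi_i$ and extend $\F$-linearly; since every $\overline{u_i}$ is a unit, $\Phi$ is an $\F$-linear isomorphism.

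It remains to show that $\Phi$ intertwines the $T$-action. By Lemma~\ref{L;generalresults}(iii), \eqref{Eq;preliminary6}, $\F$-linearity, and a routine induction on word length that uses $\Phi(Z_1Z_2 v)=Z_1\Phi(Z_2 v)$, it suffices to verify $\Phi(Zv)=Z\Phi(v)$ when $Z=E_a^*A_bE_c^*$ for $a,b,c\in[0,d]$ and $v=E_i^*\mathbf{1}+W_{n+1}$ for $i\in C$.

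For such $Z$ and $v$, Lemma~\ref{L;generalresults}(i) gives $Zv=\delta_{ci}\,\overline{p_{ib'}^a}\,(E_a^*\mathbf{1}+W_{n+1})$. When this expression is nonzero, the argument inside the proof of Lemma~\ref{L;primarymoduleadditionalproperty}(ii) forces $a\in C$, and hence $\Phi(Zv)=\delta_{ci}\,\overline{p_{ib'}^a}\,\overline{u_a}\,\psi_a$. For the right-hand side, combining $(E_a^*A_bE_c^*)^t=E_c^*A_{b'}E_a^*$ from \eqref{Eq;preliminary1} with Definition~\ref{D;dualmodule} and a second application of Lemma~\ref{L;generalresults}(i), a direct evaluation on the basis $B_n(C)$ shows that $(E_a^*A_bE_c^*)\psi_i$ is the functional sending $E_j^*\mathbf{1}+W_{n+1}$ to $\delta_{aj}\delta_{ic}\,\overline{p_{ab}^i}$, whence $(E_a^*A_bE_c^*)\psi_i=\delta_{ic}\,\overline{p_{ab}^i}\,\psi_a$ in the nonzero range (and the zero functional otherwise); therefore $Z\Phi(v)=\overline{u_i}\,\delta_{ic}\,\overline{p_{ab}^i}\,\psi_a$.

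Matching the two sides reduces to the single scalar identity $\overline{u_a}\,\overline{p_{ib'}^a}=\overline{u_i}\,\overline{p_{ab}^i}$, which is delivered by Lemma~\ref{L;trigular}: cancelling $p^n$ in $k_a p_{ib'}^a=k_i p_{ab}^i$ (valid because both $a,i\in S_n$ in the nonzero range) gives $u_a p_{ib'}^a=u_i p_{ab}^i$ in $\mathbb{Z}$, and reduction modulo $p$ yields the required equality in $\F$. Each degenerate case --- $c\neq i$, or $c=i$ with $p\mid p_{ib'}^a$, or $a\notin C$ --- kills both sides simultaneously via the same application of Lemma~\ref{L;trigular}. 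The main conceptual point, and the only real obstacle, is that the naive map $E_i^*\mathbf{1}+W_{n+1}\mapsto\psi_i$ is \emph{not} $T$-equivariant; the twist by the unit $\overline{u_i}$ is exactly what absorbs the valency ratio $k_a/k_i$ appearing in the triangle identity, and this bookkeeping is the substantive content of the argument.
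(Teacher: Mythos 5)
Your proposal is correct and follows essentially the same route as the paper: the paper's $\Phi$ sends $\overline{q_i}^{-1}E_i^*\mathbf{1}+W_{n+1}\mapsto\psi_i$, which is exactly your twist $E_i^*\mathbf{1}+W_{n+1}\mapsto\overline{u_i}\psi_i$ by the unit part of the valency, and the key scalar identity $\overline{u_a}\,\overline{p_{ib'}^a}=\overline{u_i}\,\overline{p_{ab}^i}$ via Lemma \ref{L;trigular} is the same computation as the paper's Case 3. The paper is merely more explicit about the degenerate cases $a\notin C$ and $c\notin C$, which you dispatch correctly but tersely.
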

\begin{proof}
Note that $C\subseteq S_n$ by Lemma \ref{L;primarymoduleadditionalproperty} (i). By the definition of $S_n$, for every $h\in S_n$, there is $q_h\in \mathbb{N}$ such that $k_h=p^nq_h$ and $p\nmid q_h$. By Notation \ref{N;notation3}, recall that $Irr_n(C)$ has an $\F$-basis $\{E_i^*\mathbf{1}+W_{n+1}:i\in C\}$ of cardinality $|C|$. In particular, observe that $\{\overline{q_i}^{-1}E_i^*\mathbf{1}+W_{n+1}:i\in C\}$ is also an $\F$-basis of $Irr_n(C)$ and $\dim_\F Irr_n(C)^\circ=|C|$. For every $i\in C$, let $\psi_i$ denote the linear functional from $Irr_n(C)$ to $\F$ that sends
$E_j^*\mathbf{1}+W_{n+1}$ to $\delta_{ij}$ for every $j\in C$. Since $\{\overline{q_i}^{-1}E_i^*\mathbf{1}+W_{n+1}:i\in C\}$ is an $\F$-basis of $Irr_n(C)$, notice that $\{\psi_i: i\in C\}$ is an $\F$-basis of $Irr_n(C)^\circ$. Let $\Phi$ be the $\F$-linear isomorphism from $Irr_n(C)$ to $Irr_n(C)^\circ$ that sends $\overline{q_i}^{-1}E_i^*\mathbf{1}+W_{n+1}$ to $\psi_i$ for every $i\in C$. By Definition \ref{D;dualmodule}, it is enough to check that $\Phi$ is a $T$-isomorphism. Let $a,b,c\in [0,d]$. We list three cases to prove that $\Phi$ preserves the action of $E_a^*A_bE_c^*$.
\begin{enumerate}[\text{Case} 1:]
\item $a\notin C$.
\end{enumerate}
As $a\notin C$ and $\{E_i^*\mathbf{1}+W_{n+1}: i\in C\}$ is an $\F$-basis of $Irr_n(C)$, by \eqref{Eq;preliminary3}, $E_a^*A_bE_c^*\psi_i$ is the zero element of $Irr_n(C)^\circ$ for every $i\in C$. Notice that $E_a^*A_bE_c^*(\overline{q_i}^{-1}E_i^*\mathbf{1}+W_{n+1})$ equals $\mathbf{0}+W_{n+1}$ for every $i\in C$. Otherwise, suppose that there is $\ell\in C$ such that  $E_a^*A_bE_c^*(\overline{q_\ell}^{-1}E_\ell^*\mathbf{1}+W_{n+1})\neq\mathbf{0}+W_{n+1}$. Since $Irr_n(C)$ is a $T$-submodule of $W_n/W_{n+1}$ with an $\F$-basis $\{E_i^*\mathbf{1}+W_{n+1}: i\in C\}$, by combining \eqref{Eq;preliminary3}, Lemmas \ref{L;generalresults} (i), \ref{L;primemodule} (i), $a\in C$, which is absurd as $a\notin C$. Hence $\Phi$ preserves the action of $E_a^*A_bE_c^*$ by the definition of $\Phi$ and the fact that $\{\overline{q_i}^{-1}E_i^*\mathbf{1}+W_{n+1}: i\in C\}$ is an $\F$-basis of $Irr_n(C)$.
\begin{enumerate}[\text{Case} 2:]
\item $c\notin C$.
\end{enumerate}
As $c\notin C$, by \eqref{Eq;preliminary3},
$E_a^*A_bE_c^*(\overline{q_i}^{-1}E_i^*\mathbf{1}+W_{n+1})=\mathbf{0}+W_{n+1}$ for every $i\in C$. Observe that $E_c^*A_{b'}E_a^*(E_i^*\mathbf{1}+W_{n+1})=\mathbf{0}+W_{n+1}$ for every $i\in C$. Otherwise, suppose that there exists $u\in C$ such that $E_c^*A_{b'}E_a^*(E_u^*\mathbf{1}+W_{n+1})\neq\mathbf{0}+W_{n+1}$. Since $Irr_n(C)$ is a $T$-submodule of $W_n/W_{n+1}$ with an $\F$-basis $\{E_i^*\mathbf{1}+W_{n+1}: i\in C\}$, by combining \eqref{Eq;preliminary3}, Lemmas \ref{L;generalresults} (i), \ref{L;primemodule} (i), $c\in C$, which is absurd as $c\notin C$. As $\{E_i^*\mathbf{1}+W_{n+1}: i\in C\}$ is an $\F$-basis of $Irr_n(C)$ and \eqref{Eq;preliminary1} holds, we thus notice that $E_a^*A_bE_c^*\psi_i$ is the zero element of $Irr_n(C)^{\circ}$ for every $i\in C$. So $\Phi$ preserves the action of $E_a^*A_bE_c^*$ by the definition of $\Phi$ and the fact that $\{\overline{q_i}^{-1}E_i^*\mathbf{1}+W_{n+1}: i\in C\}$ is an $\F$-basis of $Irr_n(C)$.
\begin{enumerate}[\text{Case} 3:]
\item $a,c\in C$.
\end{enumerate}
Let $v\in C$. Since $k_a=p^nq_a$ and $k_v=p^nq_v$, notice that $q_ap_{vb'}^a=q_vp_{ab}^v$ by Lemma \ref{L;trigular}. As $a\in C$, by combining \eqref{Eq;preliminary3}, Lemma \ref{L;generalresults} (i), and the definition of $\Phi$, we thus have
\begin{align*}
\Phi(E_a^*A_bE_c^*(\overline{q_v}^{-1}E_v^*\mathbf{1}+W_{n+1}))&=
\Phi(\delta_{cv}\overline{q_v}^{-1}\overline{p_{vb'}^a}(E_a^*\mathbf{1}+W_{n+1}))\\
&=\Phi(\delta_{cv}\overline{q_a}^{-1}\overline{p_{ab}^v}(E_a^*\mathbf{1}+W_{n+1}))
=\delta_{cv}\overline{p_{ab}^v}\psi_a.
\end{align*}
By the definition of $\Phi$ again, we also have
$E_a^*A_bE_c^*\Phi(\overline{q_v}^{-1}E_v^*\mathbf{1}+W_{n+1})=E_a^*A_bE_c^*\psi_v
$. Let $w\in C$. As $c\in C$, by combining \eqref{Eq;preliminary3}, \eqref{Eq;preliminary1}, and Lemma \ref{L;generalresults} (i), notice that
\begin{align*} \delta_{cv}\overline{p_{ab}^v}\psi_a(E_w^*\mathbf{1}+W_{n+1})=\delta_{cv}\delta_{aw}\overline{p_{ab}^v}=
\delta_{cv}\delta_{aw}\overline{p_{wb}^c}&=\psi_v(E_c^*A_{b'}E_a^*(E_w^*\mathbf{1}+W_{n+1}))\\
&=E_a^*A_bE_c^*\psi_v(E_w^*\mathbf{1}+W_{n+1}),
\end{align*}
which implies that $\delta_{cv}\overline{p_{ab}^v}\psi_a=E_a^*A_bE_c^*\psi_v$ since $\{E_i^*\mathbf{1}+W_{n+1}: i\in C\}$ is an $\F$-basis of $Irr_n(C)$ and $w$ is chosen from $C$ arbitrarily. We thus deduce that
$$\Phi(E_a^*A_bE_c^*(\overline{q_v}^{-1}E_v^*\mathbf{1}+W_{n+1}))=\delta_{cv}\overline{p_{ab}^v}\psi_a=
E_a^*A_bE_c^*\psi_v=\!\!E_a^*A_bE_c^*\Phi(\overline{q_v}^{-1}E_v^*\mathbf{1}+W_{n+1}).$$
Therefore $\Phi$ preserves the action of $E_a^*A_bE_c^*$ since $\{\overline{q_i}^{-1}E_i^*\mathbf{1}+W_{n+1}: i\in C\}$ is an $\F$-basis of $Irr_n(C)$ and $v$ is chosen from $C$ arbitrarily.

By Cases 1, 2, 3, $\Phi$ preserves the action of $E_a^*A_bE_c^*$. As $a, b,c$ are chosen from $[0,d]$ arbitrarily, by Lemma \ref{L;generalresults} (iii) and \eqref{Eq;preliminary6}, we thus obtain that $\Phi$ is a $T$-isomorphism. The desired lemma thus follows.
\end{proof}
The following lemma gives a characterization of the $p'$-valenced schemes.
\begin{lem}\label{L;characterization4}
The following statements are equivalent:
\begin{enumerate}[(i)]
\item [\em (i)] $S$ is a $p'$-valenced scheme;
\item [\em (ii)] $W_0$ is a self-contragredient $T$-module.
\end{enumerate}
\end{lem}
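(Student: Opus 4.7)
The plan is to dispatch (i) $\Rightarrow$ (ii) by a direct appeal to Lemma \ref{L;self-contragredient}, and to handle (ii) $\Rightarrow$ (i) by manufacturing an internal copy of $Irr_0(S_0)$ inside $W_0$ via contragredient duality, then exploiting the rigidity of the filtration $\{\mathbf{0}\}\subset W_1\subset W_0$ to derive a contradiction.

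For (i) $\Rightarrow$ (ii): if $S$ is $p'$-valenced, then $S_0=[0,d]$, and by the proof of Lemma \ref{L;primarymoduleproperty} (iv) we have $W_1=\{\mathbf{0}\}$. By Remark \ref{R;remark1} one has $Q_0=\{S_0\}$, so Lemma \ref{L;decomposition} (ii) gives $W_0=W_0/W_1=Irr_0(S_0)$. Lemma \ref{L;self-contragredient} then yields self-contragredience at once.

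For (ii) $\Rightarrow$ (i): suppose $W_0\cong W_0^\circ$ as $T$-modules but that $S$ is not $p'$-valenced; then $W_1\neq\{\mathbf{0}\}$. The pullback $\psi\mapsto\psi\circ\pi$ along the quotient map $\pi\colon W_0\twoheadrightarrow W_0/W_1$ defines a $T$-linear injection $(W_0/W_1)^\circ\hookrightarrow W_0^\circ$, as a direct unpacking of Definition \ref{D;dualmodule} confirms. Since $W_0/W_1=Irr_0(S_0)$ is self-contragredient by Lemma \ref{L;self-contragredient}, composing with the hypothesised isomorphism $W_0^\circ\cong W_0$ locates a $T$-submodule $U\subseteq W_0$ with $U\cong Irr_0(S_0)$. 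Since $U$ is simple, either $U\subseteq W_1$ or $U\cap W_1=\{\mathbf{0}\}$. In the first case, taking $n=0$ and applying Lemma \ref{L;primarymoduleproperty} (v) to the submodule $W_0/W_1$ of itself, with $V=U$ and $W=\{\mathbf{0}\}$, contradicts the conclusion of that lemma. In the second case, the composite $U\hookrightarrow W_0\twoheadrightarrow W_0/W_1$ is an injection between simple $T$-modules of equal dimension, hence an isomorphism, forcing $W_0=U\oplus W_1$; this contradicts Lemma \ref{L;primarymoduleproperty} (ii) since $W_1\neq\{\mathbf{0}\}$.

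The only real obstacle is the conceptual one of recognising that contragredience carries the head of $W_0$ to the socle of $W_0^\circ$, so that self-contragredience supplies the sought submodule $U$ isomorphic to $Irr_0(S_0)$; once $U$ is in hand, the rigid structure recorded in Lemma \ref{L;primarymoduleproperty} (ii) and (v) finishes the argument without any further scheme-theoretic combinatorics.
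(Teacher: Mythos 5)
Your proof is correct. The direction (i) $\Rightarrow$ (ii) is exactly the paper's argument: identify $W_0$ with $Irr_0([0,d])$ and invoke Lemma \ref{L;self-contragredient}. For (ii) $\Rightarrow$ (i), however, you take a genuinely different route. The paper argues by a short explicit computation: it fixes a basis $\{\psi_i\}$ of $W_0^{\circ}$, observes that if $p\mid k_j$ then $JE_j^*$ annihilates every $E_i^*\mathbf{1}$ and hence $E_j^*J$ annihilates $W_0^{\circ}$, and then transports this through the isomorphism $\Phi$ to get $E_j^*JE_0^*\mathbf{1}=E_j^*\mathbf{1}=\mathbf{0}$, a contradiction with \eqref{Eq;preliminary5}. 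You instead argue structurally: the pullback along $W_0\twoheadrightarrow W_0/W_1$ embeds $(W_0/W_1)^{\circ}\cong Irr_0(S_0)$ into $W_0^{\circ}\cong W_0$ as a simple submodule $U$, and then the dichotomy $U\subseteq W_1$ versus $U\cap W_1=\{\mathbf{0}\}$ is excluded by Lemma \ref{L;primarymoduleproperty} (v) (no composition factor of $W_1$ is isomorphic to a constituent of $W_0/W_1$) and Lemma \ref{L;primarymoduleproperty} (ii) (indecomposability of $W_0$), respectively. All the steps check out: the pullback is $T$-linear under the transpose action of Definition \ref{D;dualmodule}, $W_0/W_1=Irr_0(S_0)$ by Remark \ref{R;remark1} and Lemma \ref{L;decomposition} (ii), and in the second case the injection between simple modules of equal dimension does force $W_0=U\oplus W_1$. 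Your argument costs more machinery (it leans on Lemma \ref{L;self-contragredient} for $Irr_0(S_0)$ and on Lemma \ref{L;primarymoduleproperty} (v)) but explains conceptually why self-contragredience fails when $W_1\neq\{\mathbf{0}\}$: duality would place a copy of the head of $W_0$ in its socle, which the filtration forbids. The paper's computation is shorter and self-contained.
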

\begin{proof}
We prove (ii) by (i). By (i), note that $S_0=[0,d]$. By combining Lemmas \ref{L;primarymoduleproperty} (iii), (iv), \ref{L;decomposition} (ii), Remark \ref{R;remark1}, $W_0=Irr_0([0,d])$. So (ii) is from Lemma \ref{L;self-contragredient}.

We prove (i) by (ii). Since $\dim_\F W_0^{\circ}=\dim_\F W_0=d+1$, let $\{\psi_i: i\in [0,d]\}$ be an $\F$-basis of $W_0^{\circ}$. By (ii), let $\Phi$ denote a $T$-isomorphism from $W_0$ to $W_0^{\circ}$. So there exist $c_0, c_1, \ldots, c_d\in \F$ such that $\Phi(E_0^*\mathbf{1})=\sum_{i=0}^dc_i\psi_i$. Suppose that (i) does not hold. Then there is $j\in [0,d]$ such that $p\mid k_j$. Notice that $E_j^*J\in T$ by \eqref{Eq;preliminary2}. By \eqref{Eq;preliminary3} and \eqref{Eq;preliminary5}, $JE_j^*E_i^*\mathbf{1}=\mathbf{0}$ for every $i\in [0,d]$. As $\{E_i^*\mathbf{1}: i\in [0,d]\}$ is an $\F$-basis of $W_0$, by \eqref{Eq;preliminary1}, we thus get that $E_j^*J\psi_i$ is the zero element of $W_0^{\circ}$ for every $i\in [0,d]$. Since $\Phi$ is a $T$-isomorphism, $\Phi(E_j^*JE_0^*\mathbf{1})=E_j^*J\Phi(E_0^*\mathbf{1})=\sum_{i=0}^dc_iE_j^*J\psi_i$, which implies that $E_j^*JE_0^*\mathbf{1}=\mathbf{0}$. This is absurd as $E_j^*JE_0^*\mathbf{1}=E_j^*\mathbf{1}\neq\mathbf{0}$ by \eqref{Eq;preliminary5}. (i) thus follows.
\end{proof}
For our purpose, we also introduce the following notation and list four lemmas.
\begin{nota}\label{N;notation5}
Let $U$ denote a $T$-module. Define $E_i^*U=\{E_i^*\hat{u}: \hat{u}\in U\}$ for every $i\in [0,d]$. For every $i\in [0,d]$, note that $E_i^*U$ is an $\F$-linear subspace of $U$. Therefore $\dim_\F U\geq \dim_\F E_i^*U$ for every $i\in [0,d]$.
\end{nota}
\begin{lem}\label{L;Eric1}
The following statements are equivalent:
\begin{enumerate}[(i)]
\item [\em (i)] $U\cong W_0/W_1$ as $T$-modules;
\item [\em (ii)]$U$ is an irreducible $T$-module satisfying $\dim_\F E_i^*U>0$ for some $R_i\in O_\vartheta(S)$.
\end{enumerate}
\end{lem}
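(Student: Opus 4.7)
The plan is to prove both implications, with $(i)\Rightarrow(ii)$ being routine and $(ii)\Rightarrow(i)$ being the substantive part.

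For $(i)\Rightarrow(ii)$, Lemma \ref{L;primarymoduleproperty} (iii) tells us $W_0/W_1$ is irreducible, so $U$ is irreducible. Since $k_0=1$, we have $R_0\in O_\vartheta(S)$ and $0\in S_0$, so Lemma \ref{L;submoduleseries} (ii) shows that $E_0^*\mathbf{1}+W_1\neq\mathbf{0}+W_1$, while \eqref{Eq;preliminary3} gives $E_0^*(E_0^*\mathbf{1}+W_1)=E_0^*\mathbf{1}+W_1$. Transporting through the isomorphism $W_0/W_1\cong U$ yields a nonzero element of $E_0^*U$, hence $\dim_\F E_0^*U>0$ with $R_0\in O_\vartheta(S)$.

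For $(ii)\Rightarrow(i)$, I would pick $\hat{u}\in U$ with $E_i^*\hat{u}\neq 0$ and build a $T$-homomorphism $\Theta\colon W_0\to U$ by the formula $\Theta(ZE_i^*\mathbf{1})=ZE_i^*\hat{u}$ for $Z\in T$. To make sense of this, I would first check that $W_0=T\cdot E_i^*\mathbf{1}$: since $k_i=1$, \eqref{Eq;preliminary5} yields $E_j^*\mathbf{1}=E_j^*JE_i^*\mathbf{1}$ for every $j\in[0,d]$, and Lemma \ref{L;primemodule} (i) says these vectors span $W_0$. For well-definedness I must show that $ZE_i^*\mathbf{1}=\mathbf{0}$ implies $ZE_i^*\hat{u}=0$; this is the crucial step, and it follows from a direct matrix computation exploiting that $R_i\in O_\vartheta(S)$ forces $xR_i=\{y_i\}$ for a unique $y_i\in X$ and $E_i^*=E_{y_iy_i}$. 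Consequently $E_i^*\mathbf{1}$ is supported only at $y_i$, and the equation $ZE_i^*\mathbf{1}=\mathbf{0}$ upgrades to $ZE_i^*=O$, whence $ZE_i^*\hat{u}=0$.

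Once $\Theta$ is constructed, $\Theta(E_i^*\mathbf{1})=E_i^*\hat{u}\neq 0$ shows it is nonzero, and by irreducibility of $U$ it is surjective. Its kernel is then a proper $T$-submodule of $W_0$, so by the uniqueness of the maximal $T$-submodule (Lemma \ref{L;primarymoduleproperty} (i)) the kernel sits inside $W_1$. But $W_0/\ker\Theta\cong U$ is irreducible, which forces $\ker\Theta$ to be maximal and hence equal to $W_1$, giving $W_0/W_1\cong U$ as $T$-modules. The main obstacle is the well-definedness of $\Theta$, which is precisely where the rank-one structure of $E_i^*$ (guaranteed by $R_i\in O_\vartheta(S)$) is used decisively.
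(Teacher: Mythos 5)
Your proof is correct, but the substantive direction (ii) $\Rightarrow$ (i) follows a genuinely different route from the paper's. The paper realizes the irreducible $U$ as a quotient $_{T}T/V$ of the regular module, uses $k_i=1$ via Lemma \ref{L;computation} (ii) to locate a nonzero coset $E_i^*JE_\ell^*+V$, shows by irreducibility that $\langle\{E_j^*JE_\ell^*+V: j\in[0,d]\}\rangle_\F$ is all of $_{T}T/V$, and then obtains a surjection $W_0\to U$ by composing with the isomorphism $M_\ell\cong W_0$ of Lemma \ref{L;properties} (iv). You instead observe that $W_0$ is cyclic on $E_i^*\mathbf{1}$ (via $E_j^*\mathbf{1}=E_j^*JE_i^*\mathbf{1}$, using \eqref{Eq;preliminary5} and $k_i=1$) and build the surjection $\Theta\colon W_0\to U$ directly; your well-definedness step is sound, since $k_i=1$ gives $xR_i=\{y_i\}$ and $E_i^*=E_{y_iy_i}$, so $ZE_i^*\mathbf{1}=\mathbf{0}$ says the $y_i$-th column of $Z$ vanishes, i.e.\ $ZE_i^*=O$. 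Both arguments finish identically by invoking Lemma \ref{L;primarymoduleproperty} (i) to identify the kernel with $W_1$. Your version is shorter and bypasses Lemmas \ref{L;computation} and \ref{L;properties} (iv) entirely, at the price of a bare-hands matrix computation in place of the paper's argument through the generators $E_a^*A_bE_c^*$ of $T$; the paper's detour through $_{T}T/V$ and $M_\ell$ has the advantage of reusing machinery that it needs anyway for Lemmas \ref{L;Eric3} and \ref{L;characterization3}.
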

\begin{proof}
We prove (ii) by (i). By (i) and Lemma \ref{L;primarymoduleproperty} (iii), $U$ is an irreducible $T$-module. Note that $\{E_j^*\mathbf{1}+W_1: j\in S_0\}$ is an $\F$-basis of $W_0/W_1$ by Lemma \ref{L;submoduleseries} (ii). As $0\in S_0$, we thus have $\dim_\F E_0^*(W_0/W_1)=1>0$ by \eqref{Eq;preliminary3} and the definition of $E_0^*(W_0/W_1)$. So $\dim_\F E_0^*U=1>0$ by (i) and the definition of $E_0^*U$. (ii) follows as $R_0\in O_\vartheta(S)$.

We prove (i) by (ii). As $U$ is an irreducible $T$-module, $U$ is a $T$-module generated by a single element. So $_{T}T/V\cong U$ as $T$-modules for some $T$-submodule $V$ of $_{T}T$. As $\dim_\F E_i^*U>0$ for some $R_i\in O_\vartheta(S)$ and $_{T}T/V\cong U$ as $T$-modules, according to the definition of $E_i^*U$, there exists $Z\in T$ such that $E_i^*(Z+V)=E_i^*Z+V\neq O+V$. So
$E_i^*Z+V=E_i^*ZI+V=\sum_{j=0}^dE_i^*ZE_j^*+V\neq O+V$ by \eqref{Eq;preliminary2}. Hence there exists $\ell\in [0,d]$ such that $E_i^*ZE_\ell^*+V\neq O+V$. Notice that $k_i=1$ as $R_i\in O_\vartheta(S)$. We thus have $E_i^*ZE_\ell^*+V=c_{i\ell Z} E_i^*JE_\ell^*+V\neq O+V$ and $0\neq c_{i\ell Z}\in \F$ by combining \eqref{Eq;preliminary3}, Lemmas \ref{L;generalresults} (iii), \ref{L;computation} (ii). In particular, $\langle\{ E_j^*JE_\ell^*+V: j\in [0,d]\}\rangle_\F\neq \{O+V\}$.

We claim that $\langle\{ E_j^*JE_\ell^*+V: j\in [0,d]\}\rangle_\F={_{T}}T/V$. As $_{T}T/V\cong U$ as $T$-modules and $U$ is an irreducible $T$-module, notice that $_{T}T/V$ is an irreducible $T$-module. As we also have $\langle\{ E_j^*JE_\ell^*+V: j\in [0,d]\}\rangle_\F\neq \{O+V\}$, the desired claim follows if we verify that $\langle\{ E_j^*JE_\ell^*+V: j\in [0,d]\}\rangle_\F$ is a $T$-submodule of $_{T}T/V$. Let $a,b,c\in [0,d]$. For every $h\in [0,d]$, notice that $E_a^*A_bE_c^*(E_h^*JE_\ell^*+V)=\delta_{ch}\overline{p_{hb'}^a}E_a^*JE_\ell^*+V$ by \eqref{Eq;preliminary3} and Lemma \ref{L;generalresults} (i). So $E_a^*A_bE_c^*(E_h^*JE_\ell^*+V)\in \langle\{ E_j^*JE_\ell^*+V: j\in [0,d]\}\rangle_\F$ for every $h\in [0,d]$. As $\langle\{ E_j^*JE_\ell^*+V: j\in [0,d]\}\rangle_\F$ is an $\F$-linear space and $a,b,c$ are chosen from $[0,d]$ arbitrarily, we thus get that $\langle\{ E_j^*JE_\ell^*+V: j\in [0,d]\}\rangle_\F$ is a $T$-submodule of $_{T}T/V$ by Lemma \ref{L;generalresults} (iii) and \eqref{Eq;preliminary6}. The desired claim thus follows.

By the definition of $M_\ell$ and the proven claim, there exists an obvious surjective $T$-homomorphism from $M_\ell$ to $_{T}T/V$. So there is a surjective $T$-homomorphism from $W_0$ to $_{T}T/V$ by Lemma \ref{L;properties} (iv). As $U$ is an irreducible $T$-module and $_{T}T/V\cong U$ as $T$-modules, by Lemma \ref{L;primarymoduleproperty} (i), $W_0/W_1\cong{ _{T}T}/V\cong U$ as $T$-modules. (i) is proved.
\end{proof}
\begin{lem}\label{L;Eric2}
The following statements are equivalent:
\begin{enumerate}[(i)]
\item [\em (i)] $S$ is a $p'$-valenced scheme;
\item [\em (ii)] $U$ is an irreducible $T$-module satisfying $\dim_\F E_i^*U>0$ for some $R_i\in O_\vartheta(S)$ if and only if $U\cong W_0$ as $T$-modules.
\end{enumerate}
\end{lem}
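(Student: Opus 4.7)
The plan is to reduce the lemma to Lemma \ref{L;Eric1} combined with the characterization of $p'$-valenced schemes via $W_1$ given in Lemma \ref{L;primarymoduleproperty} (iv). The key observation is that Lemma \ref{L;Eric1} already identifies the class of $T$-modules satisfying the hypothesis in (ii) as precisely the isomorphism class of $W_0/W_1$. Consequently, the biconditional in (ii) is equivalent to the single assertion that $W_0 \cong W_0/W_1$ as $T$-modules.

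For the direction (i) $\Rightarrow$ (ii), I would argue that by (i) and the definition of $W_1$ in Definition \ref{D;primarymodule}, we have $W_1 = \{\mathbf{0}\}$, so the natural quotient map gives $W_0/W_1 \cong W_0$ as $T$-modules. Combining this with Lemma \ref{L;Eric1} yields, for every $T$-module $U$, the chain of equivalences: $U \cong W_0$ as $T$-modules iff $U \cong W_0/W_1$ as $T$-modules iff $U$ is an irreducible $T$-module with $\dim_\F E_i^* U > 0$ for some $R_i \in O_\vartheta(S)$. This is exactly (ii).

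For the direction (ii) $\Rightarrow$ (i), I would specialize the biconditional in (ii) to $U = W_0/W_1$. This $U$ is an irreducible $T$-module by Lemma \ref{L;primarymoduleproperty} (iii), and the short computation already used at the beginning of the proof of Lemma \ref{L;Eric1} shows $\dim_\F E_0^*(W_0/W_1) = 1 > 0$, where $R_0 \in O_\vartheta(S)$. Hence (ii) forces $W_0/W_1 \cong W_0$ as $T$-modules, and comparing $\F$-dimensions gives $W_1 = \{\mathbf{0}\}$. Applying Lemma \ref{L;primarymoduleproperty} (iv) then delivers (i).

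There is no serious obstacle: once Lemma \ref{L;Eric1} is in hand, the present lemma follows from the single dimensional identity $W_1 = \{\mathbf{0}\}$. The only subtle point is to be explicit that the biconditional in (ii), when tested on the specific module $U = W_0/W_1$, produces an actual isomorphism $W_0 \cong W_0/W_1$ (and therefore a dimension equality), rather than just a compatible description of the two characterizations.
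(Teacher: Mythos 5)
Your proposal is correct and follows essentially the same route as the paper: both directions reduce to Lemma \ref{L;Eric1} together with the fact that $S$ being $p'$-valenced is equivalent to $W_1=\{\mathbf{0}\}$ (equivalently, to $W_0\cong W_0/W_1$), via Lemma \ref{L;primarymoduleproperty}. The only cosmetic difference is that the paper routes the equivalence through parts (i), (iii), (iv) of Lemma \ref{L;primarymoduleproperty} rather than invoking the definition of $W_1$ directly.
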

\begin{proof}
We prove (ii) by (i). According to (i) and Lemma \ref{L;primarymoduleproperty} (iv), observe that $W_0$ is an irreducible $T$-module. So $W_1=\{\mathbf{0}\}$ by Lemma \ref{L;primarymoduleproperty} (i). Therefore (ii) follows from Lemma \ref{L;Eric1}. We prove (i) by (ii). By (ii) and Lemma \ref{L;Eric1}, we have $W_0\cong W_0/W_1$ as $T$-modules. So $W_0$ is an irreducible $T$-module by Lemma \ref{L;primarymoduleproperty} (iii). Therefore (i) follows from \ref{L;primarymoduleproperty} (iv).
\end{proof}
\begin{lem}\label{L;Eric3}
If $S$ is a $p'$-valenced scheme, the following statements are equivalent:
\begin{enumerate}[(i)]
\item [\em (i)] $U\cong W_0$ as $T$-modules;
\item [\em (ii)] $U$ is a $T$-module satisfying $\dim_\F U=\!d+1\!\geq\!\dim_\F E_i^*U\!\!>0$ for some $R_i\in O_\vartheta(S)$.
\end{enumerate}
\end{lem}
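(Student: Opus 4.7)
The plan is to reduce the problem to Lemma \ref{L;Eric2} through a composition series argument. The direction (i) $\Rightarrow$ (ii) is essentially bookkeeping: if $\phi\colon W_0 \to U$ is a $T$-isomorphism, then $\dim_\F U = \dim_\F W_0 = d+1$ by Definition \ref{D;primarymodule}; moreover, because $R_0 \in O_\vartheta(S)$ and $E_0^*\mathbf{1} = E_0^*(E_0^*\mathbf{1}) \in E_0^* W_0$ is nonzero by \eqref{Eq;preliminary3} and Lemma \ref{L;primemodule} (i), the element $\phi(E_0^*\mathbf{1}) = E_0^*\phi(\mathbf{1})$ is a nonzero element of $E_0^* U$, giving $\dim_\F E_0^* U \geq 1$.

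The substance is in (ii) $\Rightarrow$ (i). My first step would be to establish that for every $T$-submodule $V$ of a $T$-module $U$ and every $\ell \in [0,d]$,
\[
\dim_\F E_\ell^* U = \dim_\F E_\ell^* V + \dim_\F E_\ell^*(U/V).
\]
The quotient map $\pi\colon U \to U/V$ restricts to a surjection $E_\ell^* U \twoheadrightarrow E_\ell^*(U/V)$; the containment $E_\ell^* V \subseteq E_\ell^* U \cap V$ is immediate, and the reverse containment follows from the idempotency $E_\ell^* E_\ell^* = E_\ell^*$ in \eqref{Eq;preliminary3}, since any $E_\ell^* u \in V$ equals $E_\ell^*(E_\ell^* u) \in E_\ell^* V$. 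Hence the kernel of $\pi|_{E_\ell^* U}$ is exactly $E_\ell^* V$, and the identity follows.

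Iterating this identity along any composition series $U = U_0 \supset U_1 \supset \cdots \supset U_n = \{\mathbf{0}\}$ of $U$ yields
\[
\dim_\F E_i^* U = \sum_{j=1}^n \dim_\F E_i^*(U_{j-1}/U_j).
\]
Since $\dim_\F E_i^* U > 0$ by assumption, some composition factor $L := U_{j-1}/U_j$ satisfies $\dim_\F E_i^* L > 0$. Being an irreducible $T$-module with this property, Lemma \ref{L;Eric2} (together with the standing $p'$-valenced hypothesis) applies to $L$ and yields $L \cong W_0$; in particular, $\dim_\F L = d+1$. But then the chain $\dim_\F L \leq \dim_\F U_{j-1} \leq \dim_\F U = d+1$ forces $U_{j-1} = U$ and $U_j = \{\mathbf{0}\}$, so the composition series has length one and $U \cong L \cong W_0$ as required.

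The only mildly subtle point is the $E_\ell^*$-dimension identity in the first step; once it is in hand, the remainder reduces cleanly to Lemma \ref{L;Eric2}, which has already done the real work of classifying the irreducible $T$-modules on which a thin dual idempotent acts nontrivially.
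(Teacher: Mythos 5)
Your proof is correct, but it takes a genuinely different route from the paper's. For (ii) $\Rightarrow$ (i) the paper argues directly at the level of cyclic modules: it takes the $T$-submodule $M$ of $U$ generated by a nonzero element of $E_i^*U$, realizes $M$ as a quotient $_{T}T/V$, shows via \eqref{Eq;preliminary3}, Lemma \ref{L;computation} (ii), and \eqref{Eq;preliminary5} that the image of $\langle\{E_j^*JE_\ell^*: j\in [0,d]\}\rangle_\F$ in $_{T}T/V$ is all of $_{T}T/V$ and has dimension exactly $d+1$ (this is where the $p'$-valenced hypothesis enters, to keep the $d+1$ cosets $E_j^*JE_\ell^*+V$ linearly independent), and then produces a surjective $T$-homomorphism $W_0\cong M_\ell\twoheadrightarrow{_{T}T}/V\cong U$ which is an isomorphism by dimension count. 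You instead observe that the functor $U\mapsto E_\ell^*U$ is exact (your kernel computation $E_\ell^*U\cap V=E_\ell^*V$ via the idempotency in \eqref{Eq;preliminary3} is the right justification), so $\dim_\F E_i^*U$ is additive over a composition series; this locates a composition factor $L$ with $\dim_\F E_i^*L>0$, and Lemma \ref{L;Eric2} then identifies $L\cong W_0$, forcing $U=L$ by the dimension bound $\dim_\F U=d+1$. Your argument is shorter and delegates all the real work to Lemma \ref{L;Eric2} (hence ultimately to Lemma \ref{L;Eric1}), whereas the paper essentially reruns the surjection construction of Lemma \ref{L;Eric1} with the $p'$-valenced hypothesis upgrading $W_0/W_1$ to $W_0$; the additivity identity you prove is a small extra ingredient not present in the paper, but it is standard and your verification of it is complete. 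Both proofs are valid.
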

\begin{proof}
We prove (ii) by (i). We recall that $W_0$ has an $\F$-basis $\{E_j^*\mathbf{1}: j\in [0,d]\}$ and $\dim_\F W_0=d+1$. By \eqref{Eq;preliminary3} and the definition of $E_0^*W_0$, note that $\dim_\F E_0^*W_0=1>0$. So we have $\dim_\F U=d+1\geq \dim_\F E_0^*U=1>0$ by (i) and the definition of $E_0^*U$. (ii) thus follows as $R_0\in O_\vartheta(S)$.

We prove (i) by (ii). By (ii), notice that $\dim_\F E_i^*U>0$ for some $R_i\in O_\vartheta(S)$. Pick an element of an $\F$-basis of $E_i^*U$. Let $M$ denote the $T$-submodule of $U$ generated by this chosen element. So there exists a $T$-submodule $V$ of $_{T}T$ such that $_{T}T/V\cong M$ as $T$-modules. Since $M$ contains an element of an $\F$-basis of $E_i^*U$ and $_{T}T/V\cong M$ as $T$-modules, according to the definition of $E_i^*U$, notice that there is $Z\in T$ such that $E_i^*(Z+V)=E_i^*Z+V\neq O+V$.  So $E_i^*Z+V=E_i^*ZI+V=\sum_{j=0}^dE_i^*ZE_j^*+V\neq O+V$ by \eqref{Eq;preliminary2}. Hence there exists $\ell\in [0,d]$ such that $E_i^*ZE_\ell^*+V\neq O+V$. Notice that $k_i=1$ as $R_i\in O_\vartheta(S)$. We thus have $E_i^*ZE_\ell^*+V=c_{i\ell Z}E_i^*JE_\ell^*+V\neq O+V$ and $0\neq c_{i\ell Z}\in \F$ by combining \eqref{Eq;preliminary3}, Lemmas \ref{L;generalresults} (iii), \ref{L;computation} (ii). In particular, notice that $O+V\neq E_i^*JE_\ell^*+V\in \langle\{E_j^*JE_\ell^*+V: j\in [0,d]\}\rangle_\F$.

We claim that $\langle\{E_j^*JE_\ell^*+V: j\in [0,d]\}\rangle_\F={_{T}T}/V\cong M=U$. As we have known that $\langle\{E_j^*JE_\ell^*+V: j\in [0,d]\}\rangle_\F\subseteq{_{T}T}/V\cong M\subseteq U$, the desired claim thus follows if we can verify that $\dim_\F\langle\{E_j^*JE_\ell^*+V: j\in [0,d]\}\rangle_\F=\dim_\F U$. We now suppose that $\sum_{j=0}^dc_jE_j^*JE_\ell^*+V=O+V$, where $\bigcup_{j=0}^d\{c_j\}\subseteq \F$ and
$(\bigcup_{j=0}^d\{c_j\})\cap(\F\setminus\{0\})\neq \varnothing$. Therefore there exists $a\in [0,d]$ such that $c_a\neq 0$. According to \eqref{Eq;preliminary3}, observe that $c_a E_a^*JE_\ell^*+V=E_a^*(\sum_{j=0}^dc_j E_j^*JE_\ell^*+V)=E_a^*(O+V)=O+V$, which implies that $E_a^*JE_\ell^*+V=O+V$ as $c_a\neq 0$. As $S$ is a $p'$-valenced scheme, notice that $p\nmid k_a$. By \eqref{Eq;preliminary3} and \eqref{Eq;preliminary5}, we thus have $\overline{k_a}E_i^*JE_\ell^*+V=E_i^*JE_a^*(E_a^*JE_\ell^*+V)=O+V$ and $E_i^*JE_\ell^*+V=O+V$, which contradicts the inequality $E_i^*JE_\ell^*+V\neq O+V$. Hence we deduce that $\dim_\F \langle\{E_j^*JE_\ell^*+V: j\in [0,d]\}\rangle_\F=d+1$. The desired claim thus follows as $\dim_\F U=d+1$ by (ii).

By the definition of $M_\ell$ and the proven claim, there exists an obvious surjective $T$-homomorphism from $M_\ell$ to $_{T}T/V$. So there exists a surjective $T$-homomorphism from $W_0$ to $_{T}T/V$ by Lemma \ref{L;properties} (iv). According to the proven claim and (ii), notice that $_{T}T/V\cong U$ as $T$-modules and $\dim_\F U=\dim_\F W_0=d+1$. (i) thus follows.
\end{proof}
\begin{lem}\label{L;Eric4}
Assume that the following statements are equivalent:
\begin{enumerate}[(i)]
\item [\em (i)] $U\cong W_0$ as $T$-modules;
\item [\em (ii)]$U$ is a $T$-module satisfying $\dim_\F U=\!d+1\!\geq\!\dim_\F E_i^*U\!\!>0$ for some $R_i\in O_\vartheta(S)$.
\end{enumerate}
Then $S$ is a $p'$-valenced scheme.
\end{lem}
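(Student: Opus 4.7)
The plan is to argue by contradiction and exhibit, under the assumption that $S$ fails to be $p'$-valenced, a concrete $T$-module $U$ that satisfies (ii) but is not isomorphic to $W_0$, contradicting the assumed equivalence. First I would note that the direction (i) $\Rightarrow$ (ii) is automatic: any $T$-module isomorphic to $W_0$ has dimension $d+1$ and, since $E_0^*E_0^*\mathbf{1}=E_0^*\mathbf{1}\neq \mathbf{0}$ by \eqref{Eq;preliminary3}, satisfies $\dim_\F E_0^*U>0$ with $R_0\in O_\vartheta(S)$. So the substantive content of the equivalence lies in (ii) $\Rightarrow$ (i), and it suffices to produce a $T$-module $U$ meeting (ii) but failing (i).

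Suppose then that $S$ is not $p'$-valenced. There exists $j\in[0,d]$ with $p\mid k_j$, and by the definition of $W_1$ together with Lemma \ref{L;primemodule} (i), $E_j^*\mathbf{1}$ is a nonzero element of $W_1$; in particular $W_1\neq\{\mathbf{0}\}$. I would then take the $T$-module $U:=(W_0/W_1)\oplus W_1$ with the diagonal $T$-action. Its dimension equals $\dim_\F(W_0/W_1)+\dim_\F W_1=\dim_\F W_0=d+1$. Because $0\in S_0$, Lemma \ref{L;submoduleseries} (ii) guarantees that $E_0^*\mathbf{1}+W_1$ is a nonzero element of $W_0/W_1$; applying $E_0^*$ returns $E_0^*\mathbf{1}+W_1$ by \eqref{Eq;preliminary3}, so $\dim_\F E_0^*U\geq 1>0$, and $R_0\in O_\vartheta(S)$ since $k_0=1$. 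Hence $U$ satisfies (ii).

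Both summands of $U$ are nonzero: $W_0/W_1$ has dimension $|S_0|\geq 1$ since $0\in S_0$, and $W_1\neq\{\mathbf{0}\}$ by construction. Thus $U$ is a direct sum of two nonzero $T$-submodules and is therefore a decomposable $T$-module. Since $W_0$ is indecomposable by Lemma \ref{L;primarymoduleproperty} (ii), we conclude $U\not\cong W_0$, so $U$ satisfies (ii) but not (i). This contradicts the assumed equivalence of (i) and (ii), forcing $S$ to be $p'$-valenced. The argument is essentially a one-shot construction, so I do not expect a hard step; the only care required is in choosing $U$ to match the dimension and the behaviour at the valency-one dual idempotent while failing the indecomposability of $W_0$, and $(W_0/W_1)\oplus W_1$ is the natural candidate.
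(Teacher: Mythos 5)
Your proof is correct, but it takes a genuinely different route from the paper's. The paper takes $U=W_0^{\circ}$, checks directly that $\dim_\F W_0^{\circ}=d+1\geq\dim_\F E_0^*W_0^{\circ}=1>0$ (so $W_0^{\circ}$ satisfies (ii)), invokes the assumed implication (ii) $\Rightarrow$ (i) to conclude that $W_0$ is self-contragredient, and then finishes by Lemma \ref{L;characterization4}. You instead argue by contraposition: assuming $S$ is not $p'$-valenced, you produce the decomposable module $(W_0/W_1)\oplus W_1$, which satisfies (ii) via the coset $E_0^*\mathbf{1}+W_1$ (nonzero since $0\in S_0$, by Lemma \ref{L;submoduleseries} (ii)) but cannot be isomorphic to the indecomposable $W_0$ of Lemma \ref{L;primarymoduleproperty} (ii); the dimension count $\dim_\F(W_0/W_1)+\dim_\F W_1=d+1$ and the nonvanishing of both summands are all verified correctly. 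Your argument is more elementary and self-contained --- it bypasses the contragredient module and Lemma \ref{L;characterization4} entirely, needing only the indecomposability of $W_0$ and the definition of $W_1$ --- whereas the paper's choice of $W_0^{\circ}$ ties this lemma into the self-contragredience criterion that is part of the chain of equivalences in Theorem \ref{T;characterization}. Either proof is acceptable.
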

\begin{proof}
Recall that $W_0$ has an $\F$-basis $\{E_j^*\mathbf{1}: j\in [0,d]\}$. For every $j\in [0,d]$, let $\psi_j$ denote the linear functional from $W_0$ to $\F$ that sends $E_\ell^*\mathbf{1}$ to $\delta_{j\ell}$ for every $\ell\in [0,d]$. Notice that $\{\psi_j: j\in [0,d]\}$ is an $\F$-basis of $W_0^{\circ}$ and $\dim_\F W_0^{\circ}=d+1$. According to the definition of $E_0^*W_0^{\circ}$, \eqref{Eq;preliminary1}, and \eqref{Eq;preliminary3}, notice that $E_0^*W_0^{\circ}=\langle\{\psi_0\}\rangle_\F$. So we have $\dim_\F W_0^{\circ}=d+1\geq\dim_\F E_0^*W_0^{\circ}=1>0$. Since $R_0\in O_\vartheta(S)$ and (ii) implies (i), $W_0$ is a self-contragredient $T$-module. So $S$ is a $p'$-valenced scheme by Lemma \ref{L;characterization4}.
\end{proof}
We are now ready to present the main result of this section.
\begin{thm}\label{T;characterization}
The following statements are equivalent:
\begin{enumerate}[(i)]
\item [\em (i)] $S$ is a $p'$-valenced scheme;
\item [\em (ii)] The $\F$-subalgebra $B_0$ of $T$ is unital. Its identity element is central in $T$;
\item [\em (iii)] There exists a two-sided ideal $D$ of $T$ such that $T$ is a direct sum of $B_0$ and $D$;
\item [\em (iv)] The $\F$-subalgebra $B_0$ of $T$ is isomorphic to a full matrix algebra over a division $\F$-algebra as $\F$-algebras;
\item [\em (v)] If $Z\in \mathrm{Ann}_T(W_0)$, then $E_i^*Z=ZE_i^*=O$ for every $R_i\in O_\vartheta(S)$;
\item [\em (vi)] If $\tilde{Z}\in \mathrm{Rad}(T)$, then $E_i^*\tilde{Z}=\tilde{Z}E_i^*=O$ for every $R_i\in O_\vartheta(S)$;
\item [\em (vii)] For every decomposition of $_{T}T$ into a direct sum of indecomposable $T$-modules, there exist exactly $d+1$ indecomposable direct summands isomorphic to $W_0$ as $T$-modules;
\item [\em (viii)] $W_0$ is an irreducible $T$-module;
\item [\em (ix)] $W_0$ is a self-contragredient $T$-module;
\item [\em (x)]$U$ is an irreducible $T$-module satisfying $\dim_\F E_j^*U>0$ for some $R_j\in O_\vartheta(S)$ if and only if $U\cong W_0$ as $T$-modules;
\item [\em (xi)] $U$ is a $T$-module satisfying $\dim_\F U=\!d+1\!\geq\!\dim_\F E_{\tilde{j}}^*U\!\!>0$ for some $R_{\tilde{j}}\in O_\vartheta(S)$ if and only if $U\cong W_0$ as $T$-modules.
\end{enumerate}
\end{thm}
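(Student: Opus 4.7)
The plan is straightforward: the theorem is a packaging result, and essentially every equivalence has already been established in one of the preceding lemmas of the section, so I would simply cite them in order. In particular I would split the ring of equivalences into blocks, each block anchored at the statement (i) that $S$ is a $p'$-valenced scheme, so that (i) plays the role of a hub through which every other statement is seen to be equivalent to every other.

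First I would dispatch (ii), (iii), (iv) by invoking Lemma \ref{L;characterization1}, which already yields the equivalences (i) $\Leftrightarrow$ (ii) $\Leftrightarrow$ (iii) $\Leftrightarrow$ (iv). Then (v) and (vi) are handled by Lemma \ref{L;characterization2}, giving (i) $\Leftrightarrow$ (v) $\Leftrightarrow$ (vi). The equivalence (i) $\Leftrightarrow$ (vii) is exactly Lemma \ref{L;characterization3}. For (viii), I would recall Lemma \ref{L;primarymoduleproperty} (iv), which states precisely that $W_0$ is an irreducible $T$-module if and only if $S$ is a $p'$-valenced scheme. The equivalence (i) $\Leftrightarrow$ (ix) is Lemma \ref{L;characterization4}, (i) $\Leftrightarrow$ (x) is Lemma \ref{L;Eric2}, and finally (i) $\Leftrightarrow$ (xi) follows by combining Lemma \ref{L;Eric3} (which gives the forward implication (i) $\Rightarrow$ ((ii) $\Leftrightarrow$ (i)) in that statement) with Lemma \ref{L;Eric4} (which supplies the converse direction).

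The main (and really only) obstacle is bookkeeping: one needs to check that the implications in Lemmas \ref{L;Eric3} and \ref{L;Eric4} do fit together to produce a genuine biconditional between (i) and (xi), since Lemma \ref{L;Eric3} is stated under the standing hypothesis that $S$ is $p'$-valenced while Lemma \ref{L;Eric4} uses the equivalence in (xi) as a hypothesis to conclude (i). Concretely, if $S$ is $p'$-valenced, then Lemma \ref{L;Eric3} gives (i) $\Leftrightarrow$ (xi) under that assumption, which is exactly what is needed in one direction; conversely, if the biconditional in (xi) holds, Lemma \ref{L;Eric4} yields that $S$ is $p'$-valenced. Together these give (i) $\Leftrightarrow$ (xi). Once this is noted, the proof of the theorem amounts to a one-line citation chain and needs no further calculation.
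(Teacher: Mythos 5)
Your proposal is correct and coincides with the paper's own proof, which likewise disposes of the theorem by citing Lemmas \ref{L;characterization1}, \ref{L;characterization2}, \ref{L;characterization3}, \ref{L;primarymoduleproperty} (iv), \ref{L;characterization4}, \ref{L;Eric2}, \ref{L;Eric3}, and \ref{L;Eric4}. Your extra remark on how Lemmas \ref{L;Eric3} and \ref{L;Eric4} combine to give (i) $\Leftrightarrow$ (xi) is accurate and, if anything, slightly more careful than the paper's one-line citation.
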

\begin{proof}
(i) is equivalent to all the other listed statements by combining Lemmas \ref{L;characterization1}, \ref{L;characterization2}, \ref{L;characterization3}, \ref{L;primarymoduleproperty} (iv), \ref{L;characterization4}, \ref{L;Eric2}, \ref{L;Eric3}, and \ref{L;Eric4}. Hence the desired theorem follows.
\end{proof}
Let $\F G$ be the group algebra of a finite group $G$ over $\F$. Note that $\F G$ itself is an $\F G$-module under the left multiplication action of $\F G$. Denote this $\F G$-module by $_{\F G}\F G$. It is known that the following statements are equivalent: 
\begin{enumerate}[(i)]
\item The principal block algebra of $\F G$ is a simple unital $\F$-algebra;
\item The Jacobson radical of $\F G$ is the zero ideal;
\item For every decomposition of $_{\F G}\F G$ into a direct sum of indecomposable $\F G$-modules, there exists exactly one indecomposable direct summand isomorphic to the trivial $\F G$-module as $\F G$-modules.
\end{enumerate}
The following corollary tells us that similar statements about $T$ are also equivalent. Its proof comes from the Artin-Wedderburn Theorem and Theorem \ref{T;characterization}.
\begin{cor}\label{C;p'-schemes}
The following statements are equivalent:
\begin{enumerate}[(i)]
\item [\em (i)] The $\F$-subalgebra $B_0$ of $T$ is a simple unital $\F$-algebra;
\item [\em (ii)] If $Z\in \mathrm{Rad}(T)$, then $E_i^*Z=ZE_i^*=O$ for every $R_i\in O_\vartheta(S)$;
\item [\em (iii)] For every decomposition of $_{T}T$ into a direct sum of indecomposable $T$-modules, there exist exactly $d+1$ indecomposable direct summands isomorphic to $W_0$ as $T$-modules.
\end{enumerate}
\end{cor}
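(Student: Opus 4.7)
The plan is to reduce each of the three listed statements to an already-established equivalent characterization of $p'$-valenced schemes from Theorem \ref{T;characterization}. Observe first that statement (ii) of the corollary is verbatim Theorem \ref{T;characterization}(vi), and statement (iii) of the corollary is verbatim Theorem \ref{T;characterization}(vii). So both (ii) and (iii) are already known to be equivalent to $S$ being a $p'$-valenced scheme, and in particular equivalent to each other.

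It remains to connect statement (i), that $B_0$ is a simple unital $\F$-algebra, to (say) Theorem \ref{T;characterization}(iv), which asserts that $B_0$ is isomorphic as an $\F$-algebra to a full matrix algebra over a division $\F$-algebra. The direction (iv)$\Rightarrow$(i) is immediate, since a full matrix algebra over a division ring is a simple unital algebra. For the converse, note that $B_0$ is finite-dimensional over $\F$ (in fact of dimension $(d+1)^2$ by Lemma \ref{L;primaryideal}(i)), so if it is simple and unital we may invoke the Artin--Wedderburn Theorem to conclude that $B_0 \cong M_n(D)$ as $\F$-algebras for some $n \in \NN$ and some division $\F$-algebra $D$. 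Hence (i) $\Leftrightarrow$ Theorem \ref{T;characterization}(iv).

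Combining these observations, statements (i), (ii), (iii) of the corollary are each equivalent to condition (i) of Theorem \ref{T;characterization} (that $S$ is a $p'$-valenced scheme), and therefore equivalent to each other. The main obstacle is essentially nonexistent here: everything substantive has been absorbed into Theorem \ref{T;characterization}, and the only new ingredient is one application of Artin--Wedderburn to rephrase simplicity in the form required to match Theorem \ref{T;characterization}(iv). The proof will therefore consist of a short paragraph citing Theorem \ref{T;characterization} for (ii) and (iii), and a one-line appeal to the Artin--Wedderburn Theorem to identify (i) with Theorem \ref{T;characterization}(iv).
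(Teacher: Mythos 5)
Your proposal is correct and matches the paper's own (one-line) argument: the paper derives the corollary from Theorem \ref{T;characterization} together with the Artin--Wedderburn Theorem, exactly as you do by identifying (ii) and (iii) with Theorem \ref{T;characterization}(vi) and (vii) and matching (i) with Theorem \ref{T;characterization}(iv). No gaps.
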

We close this note by proposing two questions motivated by our main results.

Every composition factor of $W_0$ is an irreducible self-contragredient $T$-module by Theorem \ref{T;primarymodule} and Lemma \ref{L;self-contragredient}. This fact motivates us to ask the following question.
\begin{ques}\label{Q;question1}
\em Can one determine all irreducible self-contragredient $T$-modules up to isomorphism?
\end{ques}
We call an indecomposable $T$-module a semiprimary $T$-module if this $T$-module satisfies Lemma \ref{L;Eric3} (ii). It is obvious that $W_0$ and $W_0^{\circ}$ are semiprimary $T$-modules. By Theorem \ref{T;characterization}, note that a semiprimary $T$-module may not be isomorphic to $W_0$ as $T$-modules. So the notion of a semiprimary $T$-module generalizes the notion of the primary $T$-module. The definition of a semiprimary $T$-module motivates us to ask the following question.
\begin{ques}\label{Q;question2}
\em Can one determine all semiprimary $T$-modules up to isomorphism?
\end{ques}

\end{document}